\newtheorem{thm}{Theorem}[section]
\newtheorem{prop}[thm]{Proposition}
\newtheorem{lem}[thm]{Lemma}
\newtheorem{cor}[thm]{Corollary}
\theoremstyle{defi}
\newtheorem{defi}[thm]{Definition}
\theoremstyle{rem}
\newtheorem{rem}[thm]{Remark}
\numberwithin{equation}{section}
\newcommand{\R}{\mathbb{R}^n}  % The real numbers.
\begin{document}

\title{Smooth solutions to a class of nonlocal fully nonlinear elliptic equations}

\author{Hui Yu}
\address{Department of Mathematics, The University of Texas at Austin}
\email{hyu@math.utexas.edu}

\begin{abstract}
We show that a certain class of fully nonlinear nonlocal equations have smooth solutions as long as the right-hand sides are nice and the boundary data are bounded. To this end we follow the classical strategy. We first show that solutions are $C^{\sigma+\alpha}$ continuous, then develop a bootstrap argument robust enough for operators that may depend on the solution and its $\sigma$-derivatives.

\end{abstract}

\maketitle

\tableofcontents
%%%%%%%%%%%%%%%%%%%%%%%%%%%%%%%%%%%%%%%%%%%%%%%%%%%%

\section{Introduction}

Since the seminal work of Caffarelli and Silvestre \cite{CS1}, the regularity theory for viscosity solutions to nonlocal fully nonlinear elliptic equations has received much attention. These equations are of the type 

$$\begin{cases} I(u,x)=f(x) &\text{in $\Omega$,} \\
u=g &\text{in $\Omega^c$.}\end{cases}$$

The ellipticity condition is characterised by comparison with two extremal operators 

$$ M^{-}_{\mathcal{L}}(u-v)(x)\le I(u,x)-I(v,x)\le M^{+}_{\mathcal{L}}(u-v)(x).$$ Here 

$$M^-_{\mathcal{L}}(u)(x)=\inf_{K\in\mathcal{L}}\int \delta u(x,y) K(y)dy,$$ where $\mathcal{L}$ is some family of kernels, and $\delta u(x,y)=u(x+y)+u(x-y)-2 u(x)$. A typical example of $\mathcal{L}=\mathcal{L}_0$ is the set of kernels comparable to the fractional Laplacian, namely, 
$$ \frac{\lambda (2-\sigma)}{|y|^{n+\sigma}}\le K(y)\le \frac{\Lambda (2-\sigma)}{|y|^{n+\sigma}}$$ with $0<\lambda<\Lambda<\infty$. The upper extremal operator is defined by changing $\inf$ to $\sup$.
The notion of viscosity solutions is defined by testing the operator on touching smooth functions. See \cite{CS1} for more detail.  

In a series of works \cite{CS1}\cite{CS2}, Caffarelli and Silvestre proved fundamental results about regularity of viscosity solutions to these equations analogous to those in second order equations, for instance, H\"older continuity of solutions, $C^{1,\alpha}$ continuity of solutions for translation invariant operators,  as well as a perturbative type argument for non-translation-invariant operators.  Then in \cite{CS3} the same authors proved the Evans-Krylov theorem for nonlocal equations, hence established the existence of classical solutions for concave operators with nice kernels. 

One interesting feature of these works is that the estimate recovers the second order case as the order $\sigma$ tends to 2, indicating that nonlocal and local equations form a continuum. Also note that these results were later extended to more general types of equations by Kriventsov \cite{Kri} and Serra \cite{Ser}.  T. Jin and J. Xiong also give a different proof in \cite{JX} for the $C^{\sigma+\alpha}$-estimate with a Schauder-type estimate.

In case of second order equations, one can then bootstrap and show that solutions are smooth at least when the operator is smooth. In fact, for solutions to 
$ F(D^2u)=0,$ its derivative satisfies a linear equation with H\"older continuous coefficients $$ F_{ij}(D^2u(x))D_{ij}u_e=0.$$ From here Schauder theory gives the smoothness of the solution. 

For nonlocal equations, however,  this issue is more delicate. The main difficulty is that when coefficients depend on the solution in a global fashion, say, on $\delta u(x,y)$, the very wild boundary behaviour \cite{RosSer}   for nonlocal equations appears and pollutes the estimate. 

To be concrete, consider an equation  of the form $$ I(u,x)=\int  F(\delta u(x,y))\frac{1}{|y|^{n+\sigma}}dy=0,$$ which is a nice example of a nonlocal fully nonlinear elliptic equation  if $F$ is increasing.  Let's assume we have $C^{\sigma+\alpha}$ estimate and focus on higher regularity of the solution. The natural strategy is to differentiate and study the equation for directional derivatives of $u$, say, in the direction of $e$, which solves $$\int  F'(\delta u(x,y))\delta u_e(x,y)\frac{1}{|y|^{n+\sigma}}dy=0.$$ We immediately see that the coefficient $F'(\delta u(x,y))$ depends on global data. In particular, it is at best H\"older continuous \cite{RosSer} no matter how regular the solution is in the interior of the domain of concern. 

This equation is recently studied in \cite{Yu}, where the author showed the existence of  smooth solutions once we pose the problem in the entire $\R$, giving strong evidence that this `dependence on global data' is the only obstruction toward higher regularity for nonlocal equations.

However, in domains other than $\R$, not very many nonlinear nonlocal equations have been shown to admit smooth solutions. Actually up to the knowledge of the author, the only general class of nonlinear nonlocal operators that have smooth solutions are given by Barrios, Figalli and Valdinoci in \cite{BFV}, where the authors studied a nonlocal version of the minimal surface equation. After some analysis this reduced to an operator of the form 
$$ I(u,x)=\int K(x,y) \delta u(x,y)dy. $$ Then they developed a Schauder type theory for such linear equations, and showed the solution to the nonlocal minimal surface equation is smooth once the right-hand side is smooth and the boundary datum is bounded.  One should note that their operator $I$ is linear with the coefficients independent of the solution. This is crucial to their Schauder theory. And the question remains open of whether there is a general class of nonlocal fully nonlinear equations that admit smooth solutions. 

In this paper, we present a general class of nonlocal fully nonlinear elliptic operators that admit smooth solutions. To be precise, our main result is the following
\begin{thm} Let $\rho$ be a nice weight, and $F: S^n\times B_1\to \mathbb{R}$ be a smooth, elliptic operator that is concave in the matrix variable, where $S^n$ is the space of $n\times n$ symmetric matrices. Also assume $F(0,x)=0$ for all $x\in B_1$.  

For $\sigma\ge\sigma_0\in (0,2)$, the Dirichlet problem 
\begin{equation}\begin{cases} F(D^\sigma_\rho u, x)=f(x) &\text{in $B_1$}\\ u=g &\text{in $B_1^c$}\end{cases}\end{equation} admits a unique solution $u$ that is smooth in $B_1$ if $f$ is smooth, and $g\in \mathcal{L}^{\infty}(\R \backslash B_1)$ has some modulus of continuity.
\end{thm}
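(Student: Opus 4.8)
The plan is to follow the classical strategy indicated in the introduction: first extract an initial $C^{\sigma+\alpha}$ estimate from the nonlocal Evans--Krylov theorem, then differentiate the equation and run a Schauder bootstrap for the resulting \emph{linear} nonlocal equations. The structural reason this succeeds for $F(D^\sigma_\rho\cdot,x)$, in contrast with the operator $\int F(\delta u(x,y))|y|^{-n-\sigma}\,dy$ of the introduction, is that $F$ sees the solution only through the single object $D^\sigma_\rho u$, a fixed $\sigma$-order regularization of $u$; upon linearizing, the coefficients are merely functions of $x$, as regular as $D^\sigma_\rho u$, and in particular independent of the integration variable $y$.

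\emph{Step 1 (set-up and $C^{\sigma+\alpha}$).} Since $D^\sigma_\rho$ is a positive superposition of second differences $\delta u(x,y)$ against kernels comparable to $|y|^{-n-\sigma}$ and $F$ is elliptic, the operator $u\mapsto F(D^\sigma_\rho u,x)$ satisfies $M^-_{\mathcal{L}_0}(u-v)(x)\le F(D^\sigma_\rho u,x)-F(D^\sigma_\rho v,x)\le M^+_{\mathcal{L}_0}(u-v)(x)$; it is concave in $u$ (concavity of $F$ in the matrix slot plus linearity of $D^\sigma_\rho$), and $F(0,x)=0$. Existence and uniqueness of a bounded viscosity solution $u$, continuous on $\overline{B_1}$, then follow from Perron's method and the comparison principle, with boundary continuity coming from barriers built from the modulus of continuity of $g$; the H\"older estimate of \cite{CS1} gives $u\in C^\gamma_{\mathrm{loc}}(B_1)$. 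Feeding this into the nonlocal Evans--Krylov theorem \cite{CS3}, in the version allowing H\"older dependence on $x$ (via the perturbative argument of \cite{CS2}, or via \cite{Kri,Ser}), upgrades $u$ to $C^{\sigma+\alpha}_{\mathrm{loc}}(B_1)$.

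\emph{Steps 2--3 (linearization and the key observation).} Fix $\delta>0$ and differentiate in a direction $e$ (using difference quotients when $\sigma+\alpha\le 1$). The derivative $v=u_e$ satisfies, in $B_{1-\delta}$,
\begin{equation}
\sum_{i,j}a_{ij}(x)\,(D^\sigma_\rho v)_{ij}(x)=h(x),
\end{equation}
with $a_{ij}(x)=\partial_{M_{ij}}F(D^\sigma_\rho u(x),x)$ and $h=\partial_e f-(\partial_x F)(D^\sigma_\rho u,x)\cdot e$. This recasts as $\int K(x,y)\,\delta v(x,y)\,dy=h(x)$ for a kernel $K$ that is smooth in $y$, comparable to $|y|^{-n-\sigma}$, and exactly as regular in $x$ as the $a_{ij}$; by the uniform (in $M$ and $x$) ellipticity of $F$, $K$ is uniformly $\mathcal{L}_0$-elliptic with constants \emph{independent of any regularity of $u$}, and $K$ does not depend on $v$ --- precisely the setting for which \cite{BFV} developed an interior Schauder theory. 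The heart of the matter is the regularity of the $a_{ij}$, which is where "dependence on global data" would normally pollute the estimate. Splitting $D^\sigma_\rho u(x)$ into $\int_{|y|<\delta/2}$ and $\int_{|y|\ge\delta/2}$: in the outer piece the change of variables $z=x\pm y$ turns the integral into one of $u$ (including its values $g$ outside $B_1$, and the generically non-smooth boundary behaviour of $u$) against $\rho(|z-x|)|z-x|^{-n-\sigma}$ times a bounded factor, and since $|z-x|\ge\delta/2$ and $u\in L^\infty(\R)$ one may differentiate in $x$ under the integral arbitrarily often, with uniformly integrable derivatives by the decay of $\rho(|y|)|y|^{-n-\sigma}$ at infinity; hence the outer piece is \emph{smooth} in $x\in B_{1-\delta}$, using only $\|u\|_{L^\infty(\R)}$. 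The inner piece sees only $u$ on $B_{1-\delta/2}$ and behaves like a $\sigma$-order operator, mapping $C^\beta$ there to $C^{\beta-\sigma}$ near $x$. Thus $u\in C^\beta_{\mathrm{loc}}(B_1)$ with $\|u\|_{L^\infty(\R)}<\infty$ forces $a_{ij}\in C^{\beta-\sigma}_{\mathrm{loc}}(B_1)$ (and $h$ likewise, with $h$ smooth once $f$ is); the same computation shows that in (2.1), posed on an interior ball, the contribution of $g$ and of the far field --- for difference quotients it appears as $g$ integrated against \emph{increments} of the smooth kernel and so stays bounded as the increment tends to $0$, even though $u_e$ itself is not defined outside $B_1$ --- is a smooth bounded function of $x$ that may be absorbed into the right-hand side.

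\emph{Step 4 (bootstrap) and the main obstacle.} Starting from $u\in C^{\sigma+\alpha}_{\mathrm{loc}}(B_1)$, Step 3 gives $a_{ij},h\in C^{\alpha}_{\mathrm{loc}}$; the interior Schauder estimate of \cite{BFV} applied to (2.1), with the exterior contributions absorbed as above, yields $v=u_e\in C^{\sigma+\alpha}_{\mathrm{loc}}$ for every $e$, hence $u\in C^{1+\sigma+\alpha}_{\mathrm{loc}}(B_1)$ --- a gain of one derivative. Re-running Steps 2--3 with the improved regularity and iterating, the ellipticity constants of the linearized operator staying fixed throughout, gives $u\in C^{k}_{\mathrm{loc}}(B_1)$ for every $k$, i.e. $u\in C^\infty(B_1)$; uniqueness is part of Step 1. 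The step I expect to be the main obstacle is the combination in Steps 3--4: verifying, at every stage and uniformly in the regularity index, that everything coming from near $\partial B_1$ and from $g$ enters the interior equations only through integrals of an $L^\infty$ function against a kernel that is smooth and integrable off the diagonal --- and hence contributes only a harmless smooth forcing term --- and that the Schauder and bootstrap constants do not degenerate as $k$ grows. Concavity of $F$ is used only once, to enter the Evans--Krylov regime in Step 1; after that the argument is purely linear Schauder theory plus iteration, which is exactly the robustness needed for operators depending on the solution and its $\sigma$-derivatives.
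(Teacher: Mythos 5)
Your overall strategy mirrors the paper's: first obtain $C^{\sigma+\alpha}$ via a Serra/Kriventsov-type concave Evans--Krylov estimate with H\"older $x$-dependence, then linearize through difference quotients, cut off to absorb the far-field and boundary contribution into a forcing term using only $\|u\|_{L^\infty(\R)}$, and iterate a Schauder argument. That localization observation --- that the exterior data enters only through integrals of an $L^\infty$ function against a kernel that is smooth and integrable off the diagonal --- is indeed the crucial point, and you identify it correctly. The paper's Theorem 3.1 plays the role you assign to \cite{BFV}, but these serve the same purpose.

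There is, however, a genuine gap in your linearized equation. When you differentiate $F(D^\sigma_\rho u(x),x)=f(x)$ in the direction $e$, the chain rule applied to
$(D^\sigma_{\rho,ij}u)(x)=\int\delta u(x,y)\frac{(2-\sigma)y_iy_j\,\rho(x,y)}{|y|^{n+\sigma+2}}\,dy$
produces \emph{two} terms, because $\rho$ depends on $x$ as well:
\begin{equation*}
\partial_e\bigl[(D^\sigma_{\rho,ij}u)(x)\bigr]=(D^\sigma_{\rho,ij}u_e)(x)+\int\delta u(x,y)\,\frac{(2-\sigma)y_iy_j\,\partial_{x,e}\rho(x,y)}{|y|^{n+\sigma+2}}\,dy.
\end{equation*}
Your equation $\sum a_{ij}(x)(D^\sigma_\rho v)_{ij}=h$ with $h=\partial_e f-(\partial_x F)\cdot e$ omits the second summand entirely. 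In the paper this is the term $I^1_h$ (and in the limit $I^1$), which must be moved to the right-hand side and whose $C^\alpha$ norm must be estimated --- this is precisely where the decay hypothesis $|D^\alpha_y\rho(x,y)||y|^{|\alpha|}\le M_{|\alpha|}$ on a ``nice weight'' is used, together with the $C^{\sigma+\alpha}$ bound on $u$. The estimate is routine but the term is not ignorable, and it is one of the places where the two levels of ``variable coefficients'' the paper highlights actually bite. A second, smaller gap: the parenthetical ``(using difference quotients when $\sigma+\alpha\le 1$)'' hides real work. When $\sigma_0<1$ the paper takes $\beta$-order difference quotients $w_h=(u(\cdot+he)-u(\cdot))/|h|^\beta$ with $\beta=\min\{\sigma+\alpha_1,1\}$, and the cutoff term $I^2_h$ then comes with a factor $|h|^{1-\beta}$ that must be tracked to show the estimates are uniform in $h$; simply ``differentiating'' is not available at the first step. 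Finally, your claim that the outer piece of $D^\sigma_\rho u$ is smooth ``using only $\|u\|_{L^\infty(\R)}$'' overstates things: the $-2u(x)\int_{|y|\ge\delta/2}K(x,y)\,dy$ part of the second difference is only as regular as $u$ is locally, so the outer piece inherits the interior regularity of $u$ rather than being smooth outright --- though this does not affect your conclusion $a_{ij}\in C^{\beta-\sigma}_{\mathrm{loc}}$.
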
 

\begin{rem}
Given a nice weight $\rho$, the operator $D^\sigma_{\rho}$ is our substitute for the Hessian matrix as in second order equations, its definition given in Section 2, together with certain conditions on possible weights. It is a nonlocal object that carries information about the $\sigma$-order directional derivatives of $u$. 
\end{rem}

\begin{rem}We point out that the uniform continuity assumption on $g$ is only needed for the existence of the solution, and plays no role in regularity estimates. 
In particular, if one knows a priori that a solution exists then the continuity condition on $g$ can be dropped, and still  the solution must be unique and is smooth in the interior of the ball.

The author also wants to point out that in a previous version of the work $g$ needs to be H\"older. Xavier Ros-Oton gave an insightful suggestion on how to reduce the dependence on $g$ to be only on the $\mathcal{L}^{\infty}$ level.
\end{rem}

\begin{rem}With method in this paper, it is not too difficult to address operators involving lower order terms, that is, operators of the form $F(D^\sigma_\rho u, u, x)$ or $F(D^\sigma_\rho u, Du, u, x)$ if $\sigma>1$. However we leave the details to the reader since the proof is already rather technical as it is.
\end{rem}

\begin{rem}
For the definition of a fully nonlinear elliptic operator, see \cite{CC}.
\end{rem}

 Comparing this result to the second order theory, where a concave operator that depends smoothly on second order derivatives admits smooth solutions, our theorem says if a concave operator depends smoothly on the $\sigma$-order derivatives, then it has smooth solutions. 
 
 However, due to the nonlocal nature of the problem, we can allow a much wider range of possible `$\sigma$-order derivatives', instead of a single Hessian. This variety lies in the possible weight $\rho$ which specifies how long range interactions influence the derivative in a bounded domain. In that sense, our operator has two levels of `variable coefficients', the first level is the variable dependence on $D^{\sigma}_\rho u$, which also appears for second order operators as the variable dependence on the Hessian. The second level is how the `Hessian' $D^{\sigma}_\rho u$ depends on $u$ via the weight $\rho$. This second level of `variable coefficients' is a purely nonlocal phenomenon.

To compare with \cite{BFV}, our operators are fully nonlinear.  As a result a more careful localization argument is required.  On the other hand, our result improves  \cite{BFV}, since we do not require the order of the equation $\sigma$ to be above $1$.  This is done by a more careful study of the difference quotients, instead of jumping to first derivatives directly.

Our proof consists of two steps. The first step is to show that the solution is in $C^{\sigma+\alpha}$. This follows from the concavity of the operator by a deep result of Serra \cite{Ser}.  The next step is to bootstrap the solution to $C^{\infty}$. Here the basic idea is that the principle order coefficients are as regular as $\Delta^{\sigma/2} u$, for which we can use the estimate from the previous step, and this regularity for the coefficients is enough for a bootstrap argument.

This paper is organized as follows: in Section 2 we present some preliminary results on nonlocal operators as well as some definitions that are useful for the rest of the paper; in Section 3 we present the $C^{\sigma+\alpha}$ estimate; in Section 4 we complete the proof of the theorem by giving a bootstrap argument; and then in Section 5 we present some applications of our general theory.

%%%%%%%%%%%%%%%%%%%%%%%%%%%%%%%%%%%%%%%%%%%%%%%%%%%%
\section{Preliminaries}
We start by giving two definitions related to  the operator $D^\sigma_{\rho}$: 
\begin{defi}For $\sigma\in (0,2)$, we define the $|\cdot|_{\sigma, \Omega}$ seminorm by 
$$|u|_{\sigma,\Omega}=\sup_{x\in\Omega}\int |\delta u(x,y)|\frac{2-\sigma}{|y|^{n+\sigma}}dy.$$

The scaled version of this seminorm is 
$$|u|_{\sigma,\Omega}^{*}=\sup_{x\in\Omega}d(x)^{\sigma}\int |\delta u(x,y)|\frac{2-\sigma}{|y|^{n+\sigma}}dy,$$ where $d$ is the distance function to $\partial\Omega$.

\end{defi} 
This is in some sense a $C^\sigma$ seminorm suitable for the study of $\sigma$ order nonlocal operators. Like the standard H\"older seminorms it obeys an interpolation inequality
\begin{prop}
Given $\epsilon>0$ there is a constant $C(\epsilon)<\infty$ such that 
$$|u|_{\sigma,B_1}^{*}\le \epsilon\|u\|_{C^{\sigma+\alpha}(B_1)}^*+C(\epsilon)\|u\|_{\mathcal{L}^{\infty}(\R)},$$here $\|u\|_{C^{\sigma+\alpha}(B_1)}^*$ is the scaled H\"older norm. \end{prop}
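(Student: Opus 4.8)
\emph{Proof idea.} The plan is the classical splitting of the singular integral into a near piece and a far piece, with the cutoff radius tied to the distance to $\partial B_1$. Fix $x\in B_1$, set $d=d(x)=\dist(x,\partial B_1)$, and for a radius $r\in(0,d/2)$ to be chosen write
$$\int|\delta u(x,y)|\frac{2-\sigma}{|y|^{n+\sigma}}\,dy=\int_{|y|<r}|\delta u(x,y)|\frac{2-\sigma}{|y|^{n+\sigma}}\,dy+\int_{|y|\ge r}|\delta u(x,y)|\frac{2-\sigma}{|y|^{n+\sigma}}\,dy=:A+B.$$

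For the near piece $A$, when $|y|<r<d/2$ the whole segment $\{x\pm ty:0\le t\le1\}$ stays inside $B_{d/2}(x)\subset B_1$, on which the distance to $\partial B_1$ is at least $d/2$; hence the scaled H\"older norm controls differences there, and an elementary telescoping (if $\sigma+\alpha<1$) or Taylor/mean value argument (if $1\le\sigma+\alpha<2$) applied to the second difference $\delta u(x,y)=u(x+y)+u(x-y)-2u(x)$ gives $|\delta u(x,y)|\le C\,d^{-(\sigma+\alpha)}\|u\|^*_{C^{\sigma+\alpha}(B_1)}\,|y|^{\sigma+\alpha}$. Substituting and using $(2-\sigma)\int_{|y|<r}|y|^{\sigma+\alpha-n-\sigma}\,dy=\frac{2-\sigma}{\alpha}c_n r^{\alpha}\le \frac{2}{\alpha}c_n r^{\alpha}$ yields $A\le C\,d^{-(\sigma+\alpha)}\|u\|^*_{C^{\sigma+\alpha}(B_1)}\,r^{\alpha}$ with $C$ independent of $\sigma\in[\sigma_0,2)$.

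For the far piece $B$ I would simply bound $|\delta u(x,y)|\le 4\|u\|_{\mathcal{L}^{\infty}(\R)}$ pointwise and integrate the kernel: $(2-\sigma)\int_{|y|\ge r}|y|^{-n-\sigma}\,dy=\frac{2-\sigma}{\sigma}c_n r^{-\sigma}\le\frac{2}{\sigma_0}c_n r^{-\sigma}$, so $B\le C(\sigma_0)\|u\|_{\mathcal{L}^{\infty}(\R)}\,r^{-\sigma}$. Combining, multiplying by $d^{\sigma}$, and choosing $r=\theta d$ with $\theta\in(0,1/2)$ gives
$$d^{\sigma}\int|\delta u(x,y)|\frac{2-\sigma}{|y|^{n+\sigma}}\,dy\le C\theta^{\alpha}\|u\|^*_{C^{\sigma+\alpha}(B_1)}+C\theta^{-\sigma}\|u\|_{\mathcal{L}^{\infty}(\R)}\le C\theta^{\alpha}\|u\|^*_{C^{\sigma+\alpha}(B_1)}+C\theta^{-2}\|u\|_{\mathcal{L}^{\infty}(\R)},$$
using $\sigma<2$. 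Given $\epsilon>0$ pick $\theta$ with $C\theta^{\alpha}=\epsilon$ (capping $\theta$ at $1/2$, which only weakens the statement for large $\epsilon$), set $C(\epsilon)=C\theta^{-2}$, and take the supremum over $x\in B_1$.

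I do not expect a genuine obstacle here: the only care needed is bookkeeping, namely checking that every constant remains bounded as $\sigma\to2$ and as $\sigma\to\sigma_0$ — which is exactly what the $(2-\sigma)$ normalization in the definition of $|\cdot|^*_{\sigma,\Omega}$ and the lower bound $\sigma\ge\sigma_0$ are there to guarantee — and verifying the local bound $|\delta u(x,y)|\le C\,d(x)^{-(\sigma+\alpha)}\|u\|^*_{C^{\sigma+\alpha}(B_1)}|y|^{\sigma+\alpha}$ for $|y|<d(x)/2$ by unwinding the definition of the scaled H\"older norm.
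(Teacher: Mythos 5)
Your proof is correct, and it is a genuinely different route from the paper's. The paper proves this proposition by a soft compactness--contradiction argument: assuming no $C(\epsilon)$ works, normalizing a counterexample sequence by $\|u_n\|^*_{C^{\sigma+\alpha}(B_1)}=1$, extracting a locally uniformly convergent subsequence, and deriving a contradiction in the limit. You instead give the direct quantitative argument: split the singular integral defining $|u|_{\sigma,\{x\}}$ at radius $r=\theta d(x)$, control the near part by the (scaled, interior) $C^{\sigma+\alpha}$ seminorm via the pointwise bound $|\delta u(x,y)|\le C\,d(x)^{-(\sigma+\alpha)}\|u\|^*_{C^{\sigma+\alpha}(B_1)}|y|^{\sigma+\alpha}$ (which holds uniformly whether $\sigma+\alpha<1$ via two telescoping increments, or $1\le\sigma+\alpha<2$ via the mean-value representation $\delta u(x,y)=\int_0^1(Du(x+ty)-Du(x-ty))\cdot y\,dt$), control the far part trivially by $4\|u\|_{\mathcal L^\infty}$, and let the cutoff parameter $\theta$ do the interpolation. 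Your version buys explicit constants, a transparent dependence of $C(\epsilon)$ on $\epsilon$ (namely $C(\epsilon)\sim\epsilon^{-\sigma/\alpha}$ before the crude capping $\theta^{-\sigma}\le\theta^{-2}$), and uniformity in $\sigma\in[\sigma_0,2)$ that is visible rather than hidden in a compactness step. The paper's version is shorter on the page but leaves several steps implicit (the a priori bound $|u_n|^*_{\sigma,B_1}\le C$ before compactness kicks in, and the lower semicontinuity needed to pass $|\cdot|^*_{\sigma,B_1}\ge\epsilon$ to the limit), and it contains a notational slip ($\|u_\infty\|_{\mathcal L^\infty(\R)}\to0$ where $\|u_n\|$ is meant, and ``Hence $u_k=0$'' where $u_\infty=0$ is meant). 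Both proofs are valid; yours is the more standard and the more robust one for this type of interpolation inequality.
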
 
See \cite{GT} for a definition of scaled H\"older norms.
\begin{proof}
Suppose, on the contrary, that there is no such $C(\epsilon)$ for some $\epsilon$. Then we find a sequence $u_n$ such that 
$$|u_n|_{\sigma,B_1}^*\ge \epsilon \|u_n\|_{C^{\sigma+\alpha}(B_1)}^*+n\|u_n\|_{\mathcal{L}^{\infty}(\R)}.$$ After a normalization we assume $ \|u_n\|_{C^{\sigma+\alpha}(B_1)}^*=1$ for all $n$, and in particular $|u_n|_{\sigma,B_1}^*\le C$ . This gives a subsequence $u_k$ converging locally uniformly in $B_1$ to some $u_\infty$, and we necessarily have $\|u_\infty\|_{\mathcal{L}^{\infty}(\R)}\to 0$. 
Hence $u_k=0$. This leads to a contradiction since  $|u_\infty|_{\sigma,B_1}^*\ge \epsilon$ by our assumption.
\end{proof}

For functions with finite $|\cdot|_{\sigma, \{x\}}$ seminorm, one can define a $\sigma$ order replacement for the Hessian matrix. 

\begin{defi}
Given $\sigma\in (0,2)$, $\rho:B_1\times \R\to [\lambda_{\rho},\Lambda_{\rho}]$, a smooth function bounded away from $0$ and $\infty$, we define $D^\sigma_{\rho} u(x)$ to be  the matrix with the $(i,j)$-entry
$$ D^\sigma_{\rho,ij}u(x)=\int \delta u(x,y)\frac{(2-\sigma)\langle y,e_i\rangle\langle y,e_j\rangle \rho(x,y)}{|y|^{n+\sigma+2}}dy,$$ where $\{e_j\}$ is the standard basis of $\R$, here $u\in\mathcal{L}^\infty(\R)$ and $|u|_{\sigma,\{x\}}<\infty$.

\end{defi} 

\begin{rem}
We simplify $D^{\sigma}_{\rho}$ to $D^{\sigma}$ whenever there is no ambiguity.
\end{rem} 

\begin{rem}
As mentioned in Introduction, this is our substitute for the Hessian matrix. The very general weight $\rho$ introduces much richer structure than possible in the Hessian. Basically the only requirement on $\rho$ is that it stays away from $0$ and $\infty$, this is to balance the influence from different directions and positions, and hence the constants $\lambda_{\rho}$, and $\Lambda_{\rho}$ can also be viewed as elliptic constants. Together with the elliptic condition on the operator $F$, they give the regularization of the equation. 

All constants depending only on the dimension $n$, the elliptic constants of $F$, $\lambda_{\rho}$ and $\Lambda_{\rho}$ are universal constants.\end{rem} 

\begin{rem}
The coefficient $2-\sigma$ is for the estimate to be uniform for big $\sigma$.
\end{rem}

The map $D^\sigma$ interacts with regularity classes in the same way as $\Delta^{\sigma/2}$: 
\begin{prop}
If $u\in\mathcal{L}^{\infty}(\R)\cap C^{\sigma+\alpha}(B_1)$, then $D^\sigma u\in C^{\alpha}(B_{1/4})$ with estimates.
\end{prop}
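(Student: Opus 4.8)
The plan is to reduce the $C^\alpha$ estimate for $D^\sigma u$ to two ingredients: pointwise boundedness of $D^\sigma u$ on $B_{1/4}$, and a uniform modulus of continuity for $x\mapsto D^\sigma_{\rho,ij}u(x)$. For the first, I would split the defining integral $D^\sigma_{\rho,ij}u(x)=\int \delta u(x,y)\,\frac{(2-\sigma)\langle y,e_i\rangle\langle y,e_j\rangle \rho(x,y)}{|y|^{n+\sigma+2}}\,dy$ into a near part $\{|y|<r\}$ and a far part $\{|y|\ge r\}$ (with, say, $r=1/8$, so that $x+y,x-y$ stay in a fixed compact subset of $B_1$ for the near part when $x\in B_{1/4}$). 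On the near part I use $|\delta u(x,y)|\le [u]_{C^{\sigma+\alpha}}|y|^{\sigma+\alpha}$ when $\sigma+\alpha\le 2$, or a second-order Taylor expansion bound $|\delta u(x,y)|\le \tfrac12\|D^2u\|_{L^\infty}|y|^2$ when $\sigma+\alpha>2$; since $|\langle y,e_i\rangle\langle y,e_j\rangle|\le |y|^2$, the integrand is controlled by $|y|^{\sigma+\alpha-n-\sigma}=|y|^{\alpha-n}$ (resp.\ $|y|^{2-n-\sigma+\text{(extra from }[D^2u]_{C^{\alpha}})}$), which is integrable near the origin, and the $(2-\sigma)$ factor keeps the constant bounded as $\sigma\to 2$. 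On the far part $|\delta u(x,y)|\le 4\|u\|_{\mathcal L^\infty(\R)}$ and $\frac{|y|^2}{|y|^{n+\sigma+2}}=|y|^{-n-\sigma}$ is integrable at infinity, again with the $(2-\sigma)$ factor absorbing the blow-up of $\int_{|y|\ge r}|y|^{-n-\sigma}\,dy$ as $\sigma\to 2$. Boundedness by $C(\|u\|_{\mathcal L^\infty(\R)}+\|u\|_{C^{\sigma+\alpha}(B_1)})$ follows, uniformly in $\sigma\ge\sigma_0$.

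For the $C^\alpha$ seminorm I fix $x,x'\in B_{1/4}$ and estimate $D^\sigma_{\rho,ij}u(x)-D^\sigma_{\rho,ij}u(x')$ by writing the difference of the two integrands as a sum of two terms: one where $\delta u(x,y)-\delta u(x',y)$ is multiplied by the kernel evaluated at $x$, and one where $\delta u(x',y)$ is multiplied by $\rho(x,y)-\rho(x',y)$ over $|y|^{n+\sigma+2}$. The second term is handled exactly as the boundedness estimate above, using $|\rho(x,y)-\rho(x',y)|\le \|\nabla_x\rho\|_{L^\infty}|x-x'|$ — in fact one gets an extra factor $|x-x'|\le |x-x'|^\alpha$ since $|x-x'|\le 1$, so it is better than $C^\alpha$. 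The first term is the one carrying the genuine $C^\alpha$ behaviour: again split at $|y|=|x-x'|$. For $|y|<|x-x'|$ I bound $|\delta u(x,y)|+|\delta u(x',y)|$ by $C[u]_{C^{\sigma+\alpha}}|y|^{\sigma+\alpha}$ (or the $D^2u$ bound in the supercritical case) and integrate $|y|^{\sigma+\alpha}\cdot|y|^2\cdot|y|^{-n-\sigma-2}=|y|^{\alpha-n}$ over that ball, getting $C|x-x'|^\alpha$. For $|y|\ge |x-x'|$ I use that $w(z):=\delta u(z,y)$ satisfies, for fixed $y$ with $|y|<r$, the bound $|w(x)-w(x')|\le C[u]_{C^{\sigma+\alpha}}\min(|y|^{\sigma+\alpha},|x-x'|\,|y|^{\sigma+\alpha-1})$ when $\sigma+\alpha-1\ge 0$ (a Lipschitz-in-$z$ estimate for the second difference, valid since $u\in C^{1}$ there), yielding an integrand $\le C[u]_{C^{\sigma+\alpha}}|x-x'|\,|y|^{\alpha-1-n}$, whose integral over $\{|x-x'|\le |y|\le r\}$ is again $C[u]_{C^{\sigma+\alpha}}|x-x'|^\alpha$. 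Finally, the far contribution $\{|y|\ge r\}$ to the first term is bounded by $C\|u\|_{\mathcal L^\infty}\cdot|x-x'|^{?}$ — here one either uses the same Lipschitz-in-$z$ estimate (valid since $x+y,x-y$ can be taken in $B_1$ only partially, so more care is needed) or simply the crude bound together with $|x-x'|^\alpha\le 1$; in any case this tail is harmless.

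The step I expect to be the main obstacle is the bookkeeping for the exponent thresholds: the argument looks slightly different depending on whether $\sigma+\alpha$ is below $1$, between $1$ and $2$, or above $2$, because the pointwise control on $\delta u(x,y)$ and on its $x$-modulus of continuity comes from $C^{\sigma+\alpha}$ regularity in different forms (Hölder bound on $u$ itself, on $Du$, or on $D^2u$). One must also be careful that all bounds are uniform in $\sigma\in[\sigma_0,2)$, which is precisely the role of the normalizing factor $2-\sigma$: every divergent integral that appears (the tail $\int_{|y|\ge r}|y|^{-n-\sigma}$ and the near integral when $\sigma+\alpha$ sits exactly at an integer) carries a compensating $\frac{1}{2-\sigma}$, and one verifies the product stays bounded. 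Modulo these case distinctions the estimate is a routine singular-integral splitting, and the final bound is $\|D^\sigma u\|_{C^\alpha(B_{1/4})}\le C\bigl(\|u\|_{\mathcal L^\infty(\R)}+\|u\|_{C^{\sigma+\alpha}(B_1)}\bigr)$ with $C$ universal.
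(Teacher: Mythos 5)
Your boundedness argument for $D^\sigma u$ and your near-singularity estimate for the $C^\alpha$ seminorm are fine and essentially match the paper's treatment near the origin. The gap is in the far contribution to the term where $\delta u(x,y)-\delta u(x',y)$ is integrated against the kernel over $\{|y|\ge r\}$. Neither of your two offered options actually closes this. The ``Lipschitz-in-$z$ estimate'' for $\delta u(\cdot,y)$ at fixed $|y|\ge r$ is not available, because $x\mapsto u(x\pm y)$ then samples $u$ outside $B_1$, where $u$ is only in $\mathcal L^\infty(\R)$ and carries no modulus of continuity; the $C^{\sigma+\alpha}(B_1)$ hypothesis gives no help there. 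And the ``crude bound together with $|x-x'|^\alpha\le 1$'' is a non sequitur: it yields a bound of order $\|u\|_{\mathcal L^\infty(\R)}$, independent of $|x-x'|$, so the seminorm quotient $|D^\sigma u(x)-D^\sigma u(x')|/|x-x'|^\alpha$ remains unbounded as $x'\to x$. The far tail is genuinely not harmless.

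The missing ingredient — and the one the paper actually uses — is a change of variables that transfers the shift from $u$ onto the kernel: write $\int_{|y|\ge r}u(x\pm y)K(y)\,dy=\int u(z)K(\pm(z-x))\,dz$ (over the shifted domain) and likewise for $x'$, so the far part of $D^\sigma u(x)-D^\sigma u(x')$ is controlled by $\|u\|_{\mathcal L^\infty(\R)}$ times $\int_{|y|\ge r}|K(y)-K(y+(x-x'))|\,dy$ plus a thin-annulus correction. Since $\nabla_y K(y)=O\bigl((2-\sigma)|y|^{-n-\sigma-1}\bigr)$ away from the origin, this gives $C\|u\|_{\mathcal L^\infty(\R)}|x-x'|\le C\|u\|_{\mathcal L^\infty(\R)}|x-x'|^\alpha$. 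The paper packages this slightly differently: it splits $u=u\eta+u(1-\eta)$ with a cutoff $\eta\equiv1$ on $B_{3/4}$, bounds $D^\sigma(u\eta)$ by standard fractional-Laplacian estimates on $C^{\sigma+\alpha}(\R)$ functions (plus a Lipschitz estimate on $\rho$), and bounds $D^\sigma(u(1-\eta))$ by exactly this change of variables, exploiting that $\delta(u(1-\eta))(x,y)=0$ for $|y|<1/2$ when $x\in B_{1/4}$ and that the $-2(1-\eta)(x)u(x)$ terms vanish. Your near/far split of the integration domain and the paper's split of the function achieve the same decoupling, but the far-part kernel-smoothness argument has to be carried out, not waved away; as written your proof has a real hole at that point.
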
 

\begin{proof}
Let $\eta$ be a smooth cut-off that is $1$ in $B_{3/4}$ and vanishes outside $B_1$. Take $u^1=u\eta$ and $u^2=u(1-\eta)$. 

Since $\|u^1\|_{C^{\sigma+\alpha}(\R)}\le C\|u\|_{C^{\sigma+\alpha}(B_1)}$, one has 
\begin{align*}|D^{\sigma}_{ij}u^1(x)-D^{\sigma}_{ij}u^1(x')|&\le|\int(\delta u^1(x,y)-\delta u^1(x',y))\frac{(2-\sigma)\langle y,e_i\rangle\langle y,e_j\rangle \rho(x,y)}{|y|^{n+\sigma+2}}dy|\\&+|\int\delta u^1(x',y)\frac{(2-\sigma)\langle y,e_i\rangle\langle y,e_j\rangle (\rho(x,y)-\rho(x',y))}{|y|^{n+\sigma+2}}dy|\\&=I_1+I_2.\end{align*}
Since $\rho$ is bounded by standard estimate for the fractional Laplacian one has $$I_1\le C\|u\|_{C^{\sigma+\alpha}(B_1)}|x-x'|^{\alpha}.$$ 
 As for $I_2$, we use the smoothness of $\rho$ in $x$ $$|\rho(x,y)-\rho(x',y)|\le [\rho(\cdot,y)]_{C^{0,1}_x}|x-x'|^{\alpha}\le C|x-x'|^{\alpha}$$ and $$|\int\delta u^1(x',y)\frac{(2-\sigma)\langle y,e_i\rangle\langle y,e_j\rangle}{|y|^{n+\sigma+2}}dy|\le C\|u\|_{C^{\sigma+\alpha}(B_1)}$$ to get $$I_2\le C\|u\|_{C^{\sigma+\alpha}(B_1)}|x-x'|^{\alpha}.$$ 

%$\|D^\sigma u^1\|_{C^{\alpha}(B_{1/2})}\le C\|u\|_{C^{\sigma+\alpha}(B_1)}$ by standard estimate for fractional Laplacian.

On the other hand, for $x,x'=x+h\in B_{1/4}$, $\delta u^2(x,y)=\delta u^2(x+h,y)=0$ unless $|y|>1/2$. Also, $u^2(x)=u^2(x+h)=0$. Hence 
\begin{align*} |D^\sigma_{ij}u^2(x')-D^\sigma_{ij}u^2(x)| &\le |\int_{|y|>1/2}(u^2(x+h+y)-u^2(x+y))\frac{(2-\sigma)\langle y,e_i\rangle\langle y,e_j\rangle}{|y|^{n+\sigma+2}}dy|\\ &+|\int_{|y|>1/2} u^2(x+h-y)-u^2(x-y))\frac{(2-\sigma)\langle y,e_i\rangle\langle y,e_j\rangle}{|y|^{n+\sigma+2}}dy|\\ &\le \int_{|y|>1/4}|u^2(x+y)||\frac{(2-\sigma)\langle y-h,e_i\rangle\langle y-h,e_j\rangle}{|y-h|^{n+\sigma+2}}-\frac{(2-\sigma)\langle y,e_i\rangle\langle y,e_j\rangle}{|y|^{n+\sigma+2}}|dy\\&+\int_{|y|>1/4}|u^2(x-y)||\frac{(2-\sigma)\langle y+h,e_i\rangle\langle y+h,e_j\rangle}{|y+h|^{n+\sigma+2}}-\frac{(2-\sigma)\langle y,e_i\rangle\langle y,e_j\rangle}{|y|^{n+\sigma+2}}|dy\\& \le C\|u\|_{\mathcal{L}^\infty(\R)}|h|\int_{|y|>1/4}|\nabla_y\frac{(2-\sigma)\langle y,e_i\rangle\langle y,e_j\rangle}{|y|^{n+\sigma+2}}|dy\\ &\le C\|u\|_{\mathcal{L}^\infty(\R)}|h|.\end{align*}

Then we see $\| D^{\sigma}_{ij}u \|_{C^{\alpha}(B_{1/4})}$ is bounded by a combination of $\|u\|_{\mathcal{L}^{\infty}}$ and $\|u\|_{C^{\sigma+\alpha}(B_1)}$.
\end{proof}

Now we show that our fully nonlinear operator is elliptic in the sense of \cite{CS1}:

\begin{prop}
$F(\cdot,x)$ is uniformly elliptic with respect to $\mathcal{L}_0$.
\end{prop}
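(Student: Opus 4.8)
The plan is to verify the two-sided bound $M^-_{\mathcal{L}_0}(u-v)(x)\le F(D^\sigma_\rho u,x)-F(D^\sigma_\rho v,x)\le M^+_{\mathcal{L}_0}(u-v)(x)$ directly from the definitions, treating $w = u-v$ as the object that should be tested against the extremal operators. First I would use the fundamental theorem of calculus in the matrix variable to write
\begin{equation}
F(D^\sigma_\rho u,x)-F(D^\sigma_\rho v,x)=\int_0^1 \partial_M F\bigl(tD^\sigma_\rho u(x)+(1-t)D^\sigma_\rho v(x),x\bigr)\,dt : \bigl(D^\sigma_\rho u(x)-D^\sigma_\rho v(x)\bigr),
\end{equation}
so that setting $A(x)=\int_0^1 \partial_M F(\cdots)\,dt$ gives a symmetric matrix whose eigenvalues lie in $[\lambda_F,\Lambda_F]$ by the ellipticity of $F$. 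Since $D^\sigma_\rho$ is linear in its argument, $D^\sigma_\rho u(x)-D^\sigma_\rho v(x)=D^\sigma_\rho w(x)$, and the difference becomes $A(x):D^\sigma_\rho w(x)$.

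Next I would expand $A(x):D^\sigma_\rho w(x)$ using the definition of $D^\sigma_\rho$, namely
\begin{equation}
A(x):D^\sigma_\rho w(x)=\int \delta w(x,y)\,\frac{(2-\sigma)\,\langle A(x)y,y\rangle\,\rho(x,y)}{|y|^{n+\sigma+2}}\,dy,
\end{equation}
which exhibits the difference as $\int \delta w(x,y)\,K_x(y)\,dy$ with kernel $K_x(y)=\dfrac{(2-\sigma)\langle A(x)y,y\rangle\,\rho(x,y)}{|y|^{n+\sigma+2}}$. The key estimate is then that $K_x$ lies in $\mathcal{L}_0$: since $\lambda_F|y|^2\le \langle A(x)y,y\rangle\le \Lambda_F|y|^2$ and $\lambda_\rho\le \rho(x,y)\le\Lambda_\rho$, we get
\begin{equation}
\frac{\lambda_F\lambda_\rho(2-\sigma)}{|y|^{n+\sigma}}\le K_x(y)\le \frac{\Lambda_F\Lambda_\rho(2-\sigma)}{|y|^{n+\sigma}},
\end{equation}
so $K_x\in\mathcal{L}_0$ with $\lambda=\lambda_F\lambda_\rho$ and $\Lambda=\Lambda_F\Lambda_\rho$. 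Taking the infimum (resp. supremum) over all admissible kernels, and noting $K_x$ is one particular admissible kernel, yields $M^-_{\mathcal{L}_0}(w)(x)\le \int\delta w(x,y)K_x(y)\,dy\le M^+_{\mathcal{L}_0}(w)(x)$, which is exactly the desired inequality.

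I do not expect a serious obstacle here; the statement is essentially bookkeeping once one recognizes the FTC trick that turns the nonlinear difference into a linear operator with bounded measurable "coefficient" $A(x)$. The only minor points requiring a word of care are: (i) that the integrals defining $D^\sigma_\rho u(x)$ and $D^\sigma_\rho v(x)$ converge, which is guaranteed since we apply the operator to functions with finite $|\cdot|_{\sigma,\{x\}}$ seminorm, and the extra factor $\langle y,e_i\rangle\langle y,e_j\rangle/|y|^2$ is bounded; (ii) that the difference of the two integrals may be combined into a single integral against $\delta w(x,y)$, again justified by absolute convergence; and (iii) matching the constants so that $K_x$ genuinely falls in the class $\mathcal{L}_0$ as defined in the introduction, with the identification $\lambda=\lambda_F\lambda_\rho$, $\Lambda=\Lambda_F\Lambda_\rho$. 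None of these needs more than a sentence.
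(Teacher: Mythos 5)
Your proof is correct, and it takes a genuinely different route from the paper's. The paper does not linearize $F$ by the fundamental theorem of calculus; instead it invokes the standard ellipticity inequality $F(M_1,x)-F(M_2,x)\le \mathcal{M}^+_2(M_1-M_2)$ (the second-order Pucci operator applied to the matrix difference), then writes $\mathcal{M}^+_2$ as $\sup_{\lambda I\le A\le\Lambda I}A:(M_1-M_2)$, passes the supremum inside the integral to get the pointwise bound $\Lambda(\delta w)^+-\lambda(\delta w)^-$, and finally absorbs the factor $\rho$ into the elliptic constants. Your approach instead averages the derivative along the segment from $D^\sigma_\rho v(x)$ to $D^\sigma_\rho u(x)$, producing a \emph{single} matrix $A(x)$ with eigenvalues in $[\lambda_F,\Lambda_F]$ and hence a single kernel $K_x\in\mathcal{L}_0$; the two-sided bound then follows immediately from the definition of $M^\pm_{\mathcal{L}_0}$ as inf/sup over the class, with no need to interchange a sup and an integral. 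The trade-off: your argument is arguably tidier and yields the slightly stronger structural conclusion that $F(D^\sigma_\rho u,x)-F(D^\sigma_\rho v,x)$ is \emph{exactly} a linear operator with an $\mathcal{L}_0$ kernel (not just sandwiched between extremal operators), but it requires $F$ to be $C^1$ in the matrix variable, whereas the paper's Pucci-operator route works under the weaker definition of ellipticity alone. Since the theorem assumes $F$ smooth, both arguments are available, and both yield the same explicit elliptic constants $\lambda_F\lambda_\rho$ and $\Lambda_F\Lambda_\rho$.
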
  

\begin{proof}Take $u,v\in \mathcal{L}^{\infty}(\R)\cap C^2(x)$, then both $D^{\sigma}u$ and $D^{\sigma}v$ are defined at $x$. 

\begin{align*}
F(D^\sigma u,x)-F(D^{\sigma}v, x) &\le\mathcal{M}^{+}_2(D^{\sigma}u(x)-D^{\sigma}v(x))\\ &\le\sup_{\lambda\le A\le\Lambda}A_{ij}(D^{\sigma}_{ij}u(x)-D^{\sigma}_{ij}v(x))\\ &=\sup_{\lambda\le A\le\Lambda}A_{ij}\int (\delta u(x,y)-\delta v(x,y))\frac{(2-\sigma)\langle y,e_i\rangle\langle y,e_j\rangle\rho(x,y)}{|y|^{n+\sigma+2}}dy\\ &=\sup_{\lambda\le A\le\Lambda}\int (\delta u(x,y)-\delta v(x,y))\frac{(2-\sigma)\langle A y,y\rangle\rho(x,y)}{|y|^{n+\sigma+2}}dy\\ &\le (2-\sigma)\int(\Lambda(\delta u(x,y)-\delta v(x,y))^{+}-\lambda(\delta u(x,y)-\delta v(x,y))^{-})\frac{\rho(x,y)}{|y|^{n+\sigma}}dy\\ &\le C(\lambda_{\rho},\Lambda_{\rho})\mathcal{M}^+_{\mathcal{L}_0(\sigma)}(u-v)(x).
\end{align*}

Here $\mathcal{M}^{+}_2$ denote the maximal Pucci operator. 

Similarly one shows $F(D^\sigma u,x)-F(D^{\sigma}v, x)\ge C(\lambda_{\rho},\Lambda_{\rho})\mathcal{M}^-_{\mathcal{L}_0(\sigma)}(u-v)(x)$.
\end{proof}

We now deal with existence and uniqueness of the solution. This important point is pointed out to the author by Dennis Kriventsov.
\begin{prop}
If $f\in C(B_1)$ and $g\in\mathcal{L}^{\infty}(\R)$ has some modulus of continuity, then
(1.1) admits a unique viscosity solution $u\in C(B_1)\cap \mathcal{L}^{\infty}(\R)$.
\end{prop}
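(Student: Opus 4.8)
The plan is to derive uniqueness from the comparison principle for uniformly elliptic nonlocal equations and existence from Perron's method. The structural facts we lean on throughout are the ellipticity of $F(\cdot,x)$ with respect to $\mathcal{L}_0$ (just proved) together with the normalization $F(0,\cdot)=0$: these let us bound $F(D^\sigma w,x)$ above and below by $C\,\mathcal{M}^{\pm}_{\mathcal{L}_0}(w)(x)$, so that every barrier computation reduces to the extremal operators.

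\textit{Uniqueness.} First I would record the comparison principle: if $u$ is a bounded viscosity subsolution and $v$ a bounded viscosity supersolution of $F(D^\sigma w,\cdot)=f$ in $B_1$ with $u\le v$ in $B_1^c$, then $u\le v$ in $B_1$. This is the standard integro-differential comparison theorem (doubling of variables / sup- and inf-convolutions as in \cite{CS1}; the smooth $x$-dependence of $F$ and the continuity of $f$ cause no difficulty), whose mechanism is that at an interior maximum $x_0$ of $w=u-v$ the condition $w\le 0$ in $B_1^c$ forces $\delta w(x_0,\cdot)<0$ on a set of positive measure, hence $\mathcal{M}^{+}_{\mathcal{L}_0}(w)(x_0)<0$, contradicting $0=f(x_0)-f(x_0)\le C\,\mathcal{M}^{+}_{\mathcal{L}_0}(w)(x_0)$. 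Applying this with the two solutions in both orders yields uniqueness; note no zeroth-order monotonicity of $F$ is needed, this being a purely nonlocal feature.

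\textit{Existence.} I would take the Perron function $u=\sup\{\,v:\ v \text{ a bounded viscosity subsolution of } F(D^\sigma v,\cdot)=f \text{ in } B_1,\ v\le g \text{ in } B_1^c\,\}$. A two-sided a priori bound, and in particular non-emptiness of the family, comes from an explicit barrier $\beta\in C(\R)$ with $\beta\equiv 0$ in $B_1^c$, $0<\beta\le 1$ in $B_1$, and $\mathcal{M}^{+}_{\mathcal{L}_0}(\beta)\le -c_0<0$ throughout $B_1$ (a suitable power of $1-|x|^2$ or of $\dist(\cdot,\partial B_1)$ adapted to the extremal operators; see \cite{CS1}): then $B^{\pm}:=\pm(\|g\|_{\mathcal{L}^\infty(\R)}+C\|f\|_{\mathcal{L}^\infty(B_1)}\,\beta)$ are, for $C$ universal, respectively a supersolution lying above $g$ and a subsolution lying below $g$, since $F(D^\sigma B^{+},x)\le C'\mathcal{M}^{+}_{\mathcal{L}_0}(C\|f\|\beta)(x)\le -\|f\|\le f(x)$ (using $F(0,\cdot)=0$), and symmetrically for $B^{-}$. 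Comparison then pins every competitor between $B^-$ and $B^+$. With the family nonempty and bounded, the standard Perron arguments apply: the upper semicontinuous envelope of a sup of subsolutions is again a subsolution, and a bumping argument (valid since $F$ is continuous in $x$ and $f\in C(B_1)$) shows $u$ is also a supersolution; thus $u$ solves $F(D^\sigma u,\cdot)=f$ in $B_1$, equals $g$ in $B_1^c$, and is continuous in $B_1$ by the interior H\"older estimate of \cite{CS1}.

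\textit{Boundary data and the main obstacle.} The role of the modulus of continuity $\omega$ of $g$ is to control $u$ near $\partial B_1$. For $x_0\in\partial B_1$ and $\varepsilon>0$, choose $\delta$ with $\omega(\delta)<\varepsilon$ and a cut-off $\phi_\delta\in C^\infty$, $0\le\phi_\delta\le 1$, $\phi_\delta\equiv 0$ on $B_\delta(x_0)$, $\phi_\delta\equiv 1$ off $B_{2\delta}(x_0)$; then $\psi^{+}:=g(x_0)+\varepsilon+2\|g\|_{\mathcal{L}^\infty(\R)}\phi_\delta+L_\delta\,\beta$ lies above $g$ in $B_1^c$, has $\psi^{+}(x_0)=g(x_0)+\varepsilon$, and — since $\mathcal{M}^{+}_{\mathcal{L}_0}(\phi_\delta)$ is bounded on $B_1$ in terms of $\|\phi_\delta\|_{C^2}$ while $\mathcal{M}^{+}_{\mathcal{L}_0}(\beta)\le -c_0$ — is a supersolution in $B_1$ once $L_\delta$ is large, so $u\le\psi^{+}$ and $\limsup_{x\to x_0}u(x)\le g(x_0)+\varepsilon$; the mirror construction gives the matching $\liminf$, and $\varepsilon\downarrow 0$ forces $u(x)\to g(x_0)$. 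Hence $u\in C(B_1)\cap\mathcal{L}^\infty(\R)$ and, together with uniqueness, this proves the proposition. The hardest part is this last step: one must build barriers that are genuine supersolutions of the \emph{full nonlinear} equation on \emph{all} of $B_1$ — not merely near $x_0$, and using barriers tailored to the extremal operators $\mathcal{M}^{\pm}_{\mathcal{L}_0}$ rather than the fractional Laplacian's torsion function — while assuming only that $g$ is bounded with a modulus of continuity, which is precisely the refinement attributed in the remarks to Ros-Oton.
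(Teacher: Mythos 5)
Your proof is correct in outline but follows a genuinely different route from the paper's. The paper treats Proposition 2.8 as a citation: existence is delegated to Section 5 of Kriventsov's paper, which proceeds by \emph{regularizing the kernel and approximating} (solve a smoother problem, obtain uniform estimates, pass to the limit), and uniqueness is deduced \emph{a posteriori} from the main theorem — any viscosity solution is smooth by the bootstrap, and then classical comparison for smooth sub/supersolutions \cite{CS1} applies. You instead build everything by Perron's method and a direct comparison principle for \emph{viscosity} sub/supersolutions. The payoff of your route is self-containedness: existence and uniqueness come out before (and independently of) any regularity theory. The cost is that two points you treat as black boxes deserve more care than your sketch gives them. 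First, the viscosity comparison principle for non-translation-invariant integro-differential operators is not the naive ``evaluate at an interior max'' argument you write down; a genuine doubling-of-variables/sup-convolution proof is needed, and its hypotheses on the $x$-modulus of $F$ and of the kernels must be checked (the paper dodges this entirely by only ever comparing classical solutions). Second, your barrier $\beta$ with $\mathcal{M}^{+}_{\mathcal{L}_0}(\beta)\le -c_0$ uniformly on all of $B_1$, and the subsequent family $\psi^{\pm}$ tailored to the modulus of continuity of $g$, are standard to assert but do require a construction adapted to the Pucci extremal operators (not merely the fractional Laplacian); you acknowledge this but do not carry it out. Also a small slip: you should pick $\delta$ so that $\omega(2\delta)<\varepsilon$ (not $\omega(\delta)<\varepsilon$), since the comparison of $\psi^+$ against $g$ on $B_1^c$ is tested on $B_{2\delta}(x_0)$. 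None of these is a gap that cannot be filled, but they are the nontrivial content of a Perron argument, and it is precisely what the paper avoids re-proving by citing Kriventsov and by postponing uniqueness until after smoothness has been established.
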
 
\begin{proof} See Section 5 of \cite{Kri} for a proof of existence via a regularization and approximation procedure. The uniqueness follows because our solution is smooth, and we have comparison principle for smooth solutions \cite{CS1}. \end{proof} 

These results indicate that  $D^\sigma_{\rho}$ is a nice replacement for the Hessian matrix in nonlocal equations. However to bootstrap to smoothness, we need to impose the following condition on the possible weights we consider.

\begin{defi}
A smooth bounded weight $\rho: B_1\times\R\to [\lambda_{\rho},\Lambda_{\rho}]$ is nice if it satisfies the following decay properties in $y$: $$|D^{\alpha}_y\rho(x,y)||y|^{|\alpha|}\le M_{|\alpha|},$$ where $ M_{|\alpha|}$ is a constant depending only on $|\alpha|$, the length of the multi-index $\alpha$.
\end{defi}

\begin{rem}
This decay is needed for condition (3.2) in the next section. The author thanks the careful referee for pointing this out.
\end{rem}

\section{$C^{\sigma+\alpha}$ estimate}
We now exploit the concavity of the operator $F$ to show that our solution is $C^{\sigma+\alpha}$ in the interior of the domain. This is done in two steps. In the first step, we show that solutions with finite $|\cdot|_{\sigma,B_1}$ seminorm are actually $C^{\sigma+\alpha}$. Then this improvement of regularity is combined with the interpolation to give the estimate we need.

For the first step, we use a deep result by Serra \cite{Ser} concerning variable coefficient concave equations:

\begin{thm}
For $\sigma\in(0,2)$ let $I$ be an operator of the form 
$$I(u,x)=\inf_a(\int \delta u(x,y)K_a(x,y)dy+c_a(x))$$ where each $c_a$ is H\"older, each $K_a(x,\cdot)\in\mathcal{L}_0(\sigma)$ and enjoys a H\"older continuity in $x$ 
\begin{equation}\int_{B_{2r}\backslash B_{r}}|K_a(x,y)-K_a(x',y)|dy\le A_0|x-x'|^{\alpha}\frac{2-\sigma}{r^\sigma}\end{equation} as well as a decay in $y$ 
\begin{equation}[K_a(x,\cdot)]_{C^{\alpha}(B_\rho^c)}\le\Lambda (2-\sigma)\rho^{-n-\sigma-\alpha}\end{equation} for some $A_0$  and $\alpha\in (0,1)$.  Then there is a universal $\overline{\alpha}$ such that if $\alpha<\overline{\alpha}$ the viscosity solution to 
$$\begin{cases} I(u,x)=0 &\text{in $B_1$,} \\
u=g &\text{in $B_1^c$}\end{cases}$$ satisfies the following estimate
$$\|u\|_{C^{\sigma+\alpha}(B_{1/2})}\le C(\sup_a \|c_a\|_{C^\alpha(B_1)}+\|g\|_{\mathcal{L}^{\infty}(\R)} )$$where the constant $C$ depends on universal constants, $\alpha$, as well as $A_0$.
\end{thm}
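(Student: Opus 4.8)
\emph{Proposal (following Serra \cite{Ser}).} The plan is a compactness/blow-up argument at the $(\sigma+\alpha)$-scale, with the Evans--Krylov theorem of Caffarelli--Silvestre \cite{CS3} for translation invariant concave operators serving as the rigidity input. First I would record the regularity that comes for free: since $I$ is an infimum of linear operators with kernels in $\mathcal{L}_0(\sigma)$, it is uniformly elliptic in the sense of \cite{CS1}, so the Hölder estimate there gives $u\in C^{\beta}(B_{3/4})$ for some universal $\beta\in(0,1)$, and by barriers $\|u\|_{\mathcal{L}^{\infty}(\R)}\le C(\sup_a\|c_a\|_{\mathcal{L}^\infty(B_1)}+\|g\|_{\mathcal{L}^\infty(\R)})$. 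Dividing by $\sup_a\|c_a\|_{C^\alpha(B_1)}+\|g\|_{\mathcal{L}^\infty(\R)}$ we may assume this quantity is $\le 1$; in particular $\delta u(x,\cdot)$ is integrable against each kernel at every $x\in B_{3/4}$, so everything below makes sense.

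The heart of the matter is a dyadic iteration. I would show there are universal $\theta\in(0,1)$ and $C$ such that for every $x_0\in B_{1/2}$ there are polynomials $p_k$ of degree strictly less than $\sigma+\alpha$ with $\sup_{B_{\theta^k}(x_0)}|u-p_k|\le\theta^{k(\sigma+\alpha)}$ and $|p_{k+1}-p_k|\le C\theta^{k(\sigma+\alpha)}$ on $B_{\theta^k}(x_0)$; summing the geometric series gives a limiting polynomial $p_\infty$ with $|u(x)-p_\infty(x)|\le C|x-x_0|^{\sigma+\alpha}$, which is the pointwise $C^{\sigma+\alpha}$ estimate, and combining it over $x_0\in B_{1/2}$ together with the interpolation of Proposition~2.2 upgrades it to the stated norm bound. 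The inductive step is proved by contradiction: if it fails there are solutions $u_m$ (with the normalization above), centers $x_m$, scales $r_m\to 0$ and normalizing polynomials $q_m$ such that, with $\delta_m$ chosen minimal, the rescalings $v_m(x)=(u_m(x_m+r_m x)-q_m(x))/\delta_m$ satisfy $\sup_{B_R}|v_m|\le C(1+R)^{\sigma+\alpha}$ for all $R\le r_m^{-1}$ while $\sup_{B_1}|v_m|=1$, and each $v_m$ solves $I_m(v_m,x)=0$ in a large ball, where $I_m$ has kernels $r_m^{n+\sigma}K_a(x_m+r_mx,r_my)$ and constants $r_m^{\sigma}\delta_m^{-1}c_a(x_m+r_mx)$. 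By hypothesis, the kernels stay in $\mathcal{L}_0(\sigma)$; by (3.1) their modulus of continuity in $x$ scales by $r_m^{\alpha}\to0$; by (3.2) their decay in $y$ is preserved uniformly in $m$; and the rescaled constants have $C^\alpha$ norm of order $r_m^{\sigma+\alpha}\delta_m^{-1}$, which the minimality of $\delta_m$ forces to $0$.

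The main obstacle — and the precise point where (3.2) is indispensable — is the passage to the limit: one must show $v_m\to v_\infty$ locally uniformly \emph{and} with convergence of the nonlocal integrals $\int\delta v_m(x,y)K(y)\,dy$, so that $v_\infty$ solves a translation invariant concave equation $\inf_a\int\delta v_\infty(x,y)K_a^\infty(y)\,dy=0$ in all of $\R$, with $\sup_{B_R}|v_\infty|\le C(1+R)^{\sigma+\alpha}$ and $\sup_{B_1}|v_\infty|=1$. The growth bound on $v_m$ together with the uniform $y$-decay from (3.2) control the tails uniformly, which is what makes both the local uniform limit and the limiting equation legitimate. Once this is in hand, the Evans--Krylov estimate of \cite{CS3} applied on $B_R$ and rescaled gives $[v_\infty]_{C^{\sigma+\bar\alpha}(B_R)}\le CR^{-(\sigma+\bar\alpha)}\sup_{B_{2R}}|v_\infty|\le CR^{\alpha-\bar\alpha}$; since $\alpha<\bar\alpha$, letting $R\to\infty$ forces every difference quotient of order $\sigma+\bar\alpha$ of $v_\infty$ to vanish, so $v_\infty$ is a polynomial of degree $<\sigma+\alpha$. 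But the minimality built into the choice of $q_m$ (it absorbs the polynomial part of $v_m$ of that degree) passes to the limit, giving $v_\infty\equiv0$, which contradicts $\sup_{B_1}|v_\infty|=1$.

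I expect the delicate technical work to lie in two places. The first is the book-keeping of the blow-up family: defining the normalizing polynomials $q_m$ correctly (as the best approximating polynomials of degree $<\sigma+\alpha$), verifying that $\delta_m$ cannot decay faster than $r_m^{\sigma+\alpha}$ up to a constant — so that the rescaled constants really vanish in the limit — and checking that the growth control $\sup_{B_R}|v_m|\le C(1+R)^{\sigma+\alpha}$ holds at \emph{all} scales, not just the ones where the iteration was assumed to fail. The second is making the tail estimates fully rigorous, i.e. showing that the difference between $\int\delta v_m(x,y)K_a^m(x,y)\,dy$ and $\int\delta v_\infty(x,y)K_a^\infty(y)\,dy$ tends to $0$ uniformly on compact sets, splitting the integral into a near part (handled by local uniform convergence and uniform ellipticity) and a far part (handled by the growth bound and (3.2)); this, and the verification that viscosity subsolution/supersolution properties are stable under this convergence, is where most of the care is needed.
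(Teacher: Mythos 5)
The paper does not prove this statement: Theorem~3.1 is quoted as an external result of Serra \cite{Ser} and used as a black box, so there is no internal proof here to compare against. That said, your sketch is the correct high-level description of Serra's argument: normalize by $\sup_a\|c_a\|_{C^\alpha(B_1)}+\|g\|_{\mathcal{L}^\infty(\R)}$, run a Campanato-type dyadic improvement by approximation with polynomials of degree $<\sigma+\alpha$, argue by compactness/blow-up so that the rescaled kernels become translation-invariant in the limit (condition (3.1) sends the $x$-modulus to zero and, as you check, (3.2) is scale-invariant and controls the tails), and then invoke the nonlocal Evans--Krylov theorem of \cite{CS3} to get a Liouville-type rigidity forcing the limit to be a polynomial, contradicting the nondegeneracy built into the blow-up. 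You have also correctly flagged the two places where the real work resides: the book-keeping of the normalizing polynomials $q_m$ and minimal $\delta_m$ (in particular, subtracting $q_m$ alters the zeroth-order terms of the rescaled equation by $\int\delta q_m(x,y)K_a^m(x,y)\,dy$, which is finite precisely because $\deg q_m<\sigma+\alpha<2$, and its $C^\alpha$ norm must be tracked uniformly in $m$), and the uniform-in-$m$ tail estimates needed to show that the viscosity property passes to the limit. So the proposal is consistent with the approach the paper relies on, even though the paper itself offers no proof of the theorem.
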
 

Now we prove the following 

\begin{lem} There is a universal $\overline{\alpha}>0$ such that for any $\alpha<\overline{\alpha}$ 
the solution $u$ to (1.1) with $$|u|_{\sigma,B_1}\le 1$$ satisfies 
$$\|u\|_{C^{\sigma+\alpha}(B_{1/2})}\le C(1+\|f\|_{C^\alpha(B_1)}+\|g\|_{\mathcal{L}^{\infty}(\R)}+\|\partial_x F(\cdot,\cdot)\|_{\infty}+\sup_{N\in S^n}[D_MF(N,\cdot)]_{C^{0,1}_x} ),$$where the constant $C$ depends on universal constants and $\sup_{N\in S^n}[D_MF(N,\cdot)]_{Lip}$.
\end{lem}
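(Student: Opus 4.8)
The idea is to linearize by concavity and reduce to the theorem of Serra quoted above. Since $F(\cdot,x)$ is smooth and concave, it is the infimum of its supporting planes: for every $M\in S^n$ and $x\in B_1$,
$$F(M,x)=\inf_{N\in S^n}\big(D_MF(N,x):(M-N)+F(N,x)\big),$$
the infimum being attained at $N=M$, where $A:B$ denotes the Frobenius inner product. Substituting $M=D^\sigma_\rho u(x)$ and unwinding the definition of $D^\sigma_\rho$, the term $D_MF(N,x):D^\sigma_\rho u(x)$ equals $\int\delta u(x,y)K_N(x,y)\,dy$ with
$$K_N(x,y)=\frac{(2-\sigma)\,\langle D_MF(N,x)\,y,y\rangle\,\rho(x,y)}{|y|^{n+\sigma+2}};$$
since $\delta u(x,\cdot)$ is even we may replace $\rho(x,\cdot)$ by its even part, so that $K_N(x,\cdot)$ is an even kernel. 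Hence, writing $c_N(x)=F(N,x)-D_MF(N,x):N-f(x)$, the solution $u$ of (1.1) solves $I(u,x)=0$ in $B_1$, $u=g$ in $B_1^c$, where $I(u,x)=\inf_N\big(\int\delta u(x,y)K_N(x,y)\,dy+c_N(x)\big)$.

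The normalization $|u|_{\sigma,B_1}\le1$ is precisely what lets us truncate the index set to a bounded region. From $\lambda_\rho\le\rho\le\Lambda_\rho$ and $|\langle y,e_i\rangle\langle y,e_j\rangle|\le|y|^2$ one gets $\|D^\sigma_\rho u\|_{L^\infty(B_1)}\le C\Lambda_\rho|u|_{\sigma,B_1}\le C\Lambda_\rho=:R_0$, a universal constant. Moreover, when $u$ is tested against a smooth $\phi$ touching it at an interior point $x_0$, the $\sigma$-Hessian at $x_0$ of the competitor $v:=\phi\chi_{B_r(x_0)}+u\chi_{B_r(x_0)^c}$ also lies in $\overline{B}_{R_0}$ once $r$ is small: the contribution of $\phi$ near $y=0$ is $O(\|D^2\phi\|_{L^\infty}r^{2-\sigma})\to0$ as $r\to0$ (this is where $\sigma<2$ is used), while the far part is bounded by $\Lambda_\rho|u|_{\sigma,\{x_0\}}\le\Lambda_\rho$. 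Therefore, on the range of matrices that actually occur, the supporting-plane identity holds with $N=M$, and $u$ is a viscosity solution of the truncated problem $\inf_{|N|\le R_0}\big(\int\delta u(x,y)K_N(x,y)\,dy+c_N(x)\big)=0$ in $B_1$, $u=g$ in $B_1^c$.

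It remains to check the hypotheses of Serra's theorem for the family $\{K_N,c_N:|N|\le R_0\}$ and to read off the constants. Ellipticity $\lambda I\le D_MF\le\Lambda I$ together with $\lambda_\rho\le\rho\le\Lambda_\rho$ shows each $K_N(x,\cdot)\in\mathcal{L}_0(\sigma)$, with ellipticity constants $\lambda\lambda_\rho,\Lambda\Lambda_\rho$, which are universal. The Lipschitz-in-$x$ control of $D_MF(N,\cdot)$ and of $\rho(\cdot,y)$, combined with $\int_{B_{2r}\setminus B_r}|y|^{-n-\sigma}\,dy\le Cr^{-\sigma}$ and $|x-x'|\le C|x-x'|^\alpha$ on $B_1$, yields (3.1) with $A_0\le C\big(1+\sup_{|N|\le R_0}[D_MF(N,\cdot)]_{C^{0,1}_x}\big)$. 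Condition (3.2) follows from the homogeneity and smoothness of $y\mapsto\langle D_MF(N,x)y,y\rangle|y|^{-n-\sigma-2}$ together with the decay $|D^\beta_y\rho(x,y)||y|^{|\beta|}\le M_{|\beta|}$ of the nice weight, via the Leibniz rule; this is the only place the niceness of $\rho$ is used. Finally, $F(0,x)=0$ and ellipticity give $|F(N,x)|\le C|N|$ and $|D_MF(N,x):N|\le C|N|$, so with the Lipschitz-in-$x$ bounds for $F$ and $D_MF$ one obtains $\sup_{|N|\le R_0}\|c_N\|_{C^\alpha(B_1)}\le C\big(1+\|f\|_{C^\alpha(B_1)}+\|\partial_xF\|_\infty+\sup_{N\in S^n}[D_MF(N,\cdot)]_{C^{0,1}_x}\big)$, $R_0$ being absorbed into the universal constant. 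Taking $\overline{\alpha}$ no larger than the exponent in Serra's theorem, that theorem gives $\|u\|_{C^{\sigma+\alpha}(B_{1/2})}\le C\big(\sup_{|N|\le R_0}\|c_N\|_{C^\alpha(B_1)}+\|g\|_{\mathcal{L}^\infty(\R)}\big)$, and inserting the last bound produces exactly the asserted estimate, with $C$ depending on universal constants, $\alpha$ and $A_0$, hence on $\sup_{N\in S^n}[D_MF(N,\cdot)]_{Lip}$.

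The step I expect to be most delicate is the truncation in the second paragraph: one must make sure that $u$ remains a viscosity solution of the \emph{restricted} infimum equation, i.e. that no admissible test function produces a $\sigma$-Hessian outside $\overline{B}_{R_0}$ — this is exactly where $|u|_{\sigma,B_1}\le1$ and $\sigma<2$ enter. Verifying (3.2) is the other point needing genuine care, while the remaining estimates are routine bookkeeping, included only to track the stated dependence of the constants.
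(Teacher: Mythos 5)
Your proof is correct and follows essentially the same route as the paper: the supporting-hyperplane representation of the concave $F$, the identification of the kernels $K_N$ and zeroth-order terms $c_N$, the truncation of the index set to $\|N\|\le\Lambda_\rho$ via the bound $\|D^\sigma_\rho u\|_{\infty}\le\Lambda_\rho$ coming from $|u|_{\sigma,B_1}\le1$, verification of Serra's conditions (3.1) and (3.2) using the Lipschitz-in-$x$ bounds on $D_MF$ and $\rho$ and the decay condition on $\rho$, and the $C^\alpha$ bound on $c_N$ using $F(0,x)=0$. One remark on the step you flag as most delicate: the paper's Proposition~3.3 invokes a preliminary regularization making $|u|_{\sigma,B_1}$ finite, so in Lemma~3.2 the integral $\int\delta u\,K_N\,dy$ is absolutely convergent and the equation holds pointwise; from a classical solution of the truncated infimum problem, viscosity solvability is automatic, so the careful competitor argument about keeping the $\sigma$-Hessian of touching functions in $\overline{B}_{R_0}$ is not actually required — the boundedness of $D^\sigma_\rho u$ itself already allows one to drop all $N$ with $\|N\|>\Lambda_\rho$ from the pointwise infimum without changing its value.
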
 
Here $D_MF(N,x)$ is the differential of $F$ with respect to the matrix variable at $(N,x)$.

\begin{proof}
Due to its smoothness and concavity our operator can be written in the following form 
$$F(M,x)=\inf_{N\in S^n}\{D_MF(N,x)M-D_MF(N,x)N+F(N,x)\}.$$  As a result, if we define the kernels 
$$K_N(x,y)=\frac{(2-\sigma)\langle D_MF(N,x)y,y\rangle\rho(x,y)}{|y|^{n+\sigma+2}}$$ then the solution satisfies the equation 
$$\inf_{N\in S^n}(\int \delta u(x,y)K_N(x,y)dy+c_N(x))=0$$ where $c_N(x)=-D_MF(N,x)N+F(N,x)-f(x)$.

Since $|u|_{\sigma,B_1}\le 1$, and in particular $ \|D^{\sigma}u(x)\|\le \Lambda_{\rho}$ in $B_1$, the equation can be further reduced to 
$$\inf_{\|N\|\le  \Lambda_{\rho}}(\int \delta u(x,y)K_N(x,y)dy+c_N(x))=0.$$

To check condition (3.1) we note that 
\begin{align*}|K_N(x,y)-K_N(x',y)|&=\frac{2-\sigma}{|y|^{n+\sigma+2}}|\rho(x,y)\langle D_MF(N,x)y,y\rangle-\rho(x',y)\langle D_MF(N,x')y,y\rangle|\\&\le\frac{2-\sigma}{|y|^{n+\sigma+2}}|\rho(x,y)\langle(D_MF(N,x)-D_MF(N,x'))y,y\rangle|\\&+\frac{2-\sigma}{|y|^{n+\sigma+2}}||(\rho(x,y)-\rho(x',y))\langle D_MF(N,x')y,y\rangle|\\&\le\frac{2-\sigma}{|y|^{n+\sigma+2}}\Lambda_\rho[D_MF(N,\cdot)]_{C^{0,1}_x}|x-x'||y|^2\\&+\frac{2-\sigma}{|y|^{n+\sigma+2}}|[\rho(\cdot,y)]_{C^{0,1}_x}|x-x'|\Lambda|y|^2\\&\le C\frac{2-\sigma}{|y|^{n+\sigma}}|([\rho(\cdot,y)]_{C^{0,1}_x}+[D_MF(N,\cdot)]_{C^{0,1}_x})|x-x'|. \end{align*} Note that we used the ellipticity of $\rho$ and $F$. 

Consequently we can take $A_0$ to be $C(\sup_{N\in S^n}[D_AF(N,\cdot)]_{Lip}+\sup_{y\in\R}[\rho(\cdot,y)]_{C^{0,1}_x})$ and (3.1) is true for any $\alpha\le 1.$

The other condition (3.2) follows from 
\begin{align*}|\nabla_y K_N(x,y)|&=|\frac{(2-\sigma)}{|y|^{n+\sigma+2}}\rho(x,y)\nabla_y(\langle D_MF(N,x)y,y\rangle)\\&+\frac{(2-\sigma)}{|y|^{n+\sigma+2}}\langle D_MF(N,x)y,y\rangle\nabla_y \rho(x,y)\\&+\rho(x,y)\langle D_MF(N,x)y,y\rangle\nabla_{y}(\frac{(2-\sigma)}{|y|^{n+\sigma+2}})|\\&=|\frac{(2-\sigma)}{|y|^{n+\sigma+2}}\rho(x,y)2\langle D_MF(N,x)y\\&+\frac{(2-\sigma)}{|y|^{n+\sigma+2}}\langle D_MF(N,x)y,y\rangle\nabla_y \rho(x,y)\\&+\rho(x,y)\langle D_MF(N,x)y,y\rangle\frac{(2-\sigma)(n+\sigma+2)y}{|y|^{n+\sigma+4}}|\\&\le|\frac{(2-\sigma)2\Lambda\Lambda_{\rho}}{|y|^{n+\sigma+1}}|+|\frac{(2-\sigma)\Lambda M_1}{|y|^{n+\sigma+1}}|+|\frac{(2-\sigma)(n+\sigma+2)\Lambda\Lambda_{\rho}}{|y|^{n+\sigma+1}}|.\end{align*} Here $M_1$ is the constant in Definition 2.10.

Hence to apply Theorem 3.1 we only need to estimate the H\"older norm of $c_N$. To this end, we have

\begin{align*}|c_N(x)|&\le \|-D_MF(N,x)\|\cdot\|N\|+|F(N,x)|+|f(x)|\\ &\le \Lambda\|N\|+\Lambda\|N\|+\|f\|_{\mathcal{L}^{\infty}(B_1)}\\&\le 2\Lambda\Lambda_{\rho}+\|f\|_{\mathcal{L}^{\infty}(B_1)}.\end{align*} Here we used the fact that $F(0,x)=0$ and that $\|N\|\le \Lambda_\rho$.

Also,
\begin{align*}|c_N(x)-c_N(x')| &\le |D_MF(N,x)-D_MF(N,x')|\|N\|+|F(N,x)-F(N,x')|+|f(x)-f(x')|\\ &\le\Lambda_\rho \sup_{N\in S^n}[D_MF(N,\cdot)]_{C^{0,1}_x}|x-x'|+\sup_{N\in S^n}\|\partial_x F(N,\cdot)\||x-x'|+[f]_{C^{\alpha}(B_1)}|x-x'|^{\alpha}.\end{align*}

In conclusion Theorem 3.1 gives

$$\|u\|_{C^{\sigma+\alpha}(B_{1/2})}\le C(1+\|f\|_{C^\alpha(B_1)}+\|\partial_x F(\cdot,\cdot)\|_{\infty}+\sup_{N\in S^n}[D_MF(N,\cdot)]_{C^{0,1}_x}+\|g\|_{\mathcal{L}^{\infty}(\R)} ),$$ where the constant $C$ depends on universal constants, $\alpha<\overline{\alpha}$, $\sup_{N\in S^n}[D_MF(N,\cdot)]_{C^{0,1}_x}$ and $\sup_{y\in\R}[\rho(\cdot,y)]_{C^{0,1}_x}$.

\end{proof}

A scaling and interpolation argument gives the $C^{\sigma+\alpha}$ estimate:

\begin{prop}
Let $u$ and $F$ be as in Theorem 1.1. Then there is a universal $\overline{\alpha}$ such that $$\|u\|_{C^{\sigma+\alpha}(B_{1/2})}\le C(\|f\|_{C^\alpha(B_1)}+\|\partial_x F(\cdot,\cdot)\|_{\infty}+\|g\|_{\mathcal{L}^{\infty}(\R)} )$$ where the constant $C$ depends on universal constants , $\alpha<\overline{\alpha}$, $\sup_{N\in S^n}[D_MF(N,\cdot)]_{C^{0,1}_x}$ and $\sup_{y\in\R}[\rho(\cdot,y)]_{C^{0,1}_x}$.
\end{prop}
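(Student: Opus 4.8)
The plan is to upgrade Lemma 3.2, which requires the a priori bound $|u|_{\sigma,B_1}\le 1$, into the unconditional estimate of Proposition 3.3 by combining it with the interpolation inequality of Proposition 2.2 and a standard covering/scaling argument. The obstruction is that Lemma 3.2 only applies when the $\sigma$-seminorm of $u$ is controlled, while Proposition 3.3 should hold for \emph{any} solution. To bridge this, I would work with the scaled quantities: set $Q:=\|u\|_{C^{\sigma+\alpha}(B_1)}^*$ and $L:=\|u\|_{\mathcal{L}^\infty(\R)}$, and $D:=\|f\|_{C^\alpha(B_1)}+\|\partial_x F(\cdot,\cdot)\|_\infty+\|g\|_{\mathcal{L}^\infty(\R)}$, and aim to prove $Q\le C(D+L)$, which after absorbing $L$ into $D$ (note $L\le C\|g\|_{\mathcal{L}^\infty}+C\|f\|_{\mathcal{L}^\infty}$ by the maximum principle coming from ellipticity, Proposition 2.6) gives the claim.

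First I would localize. Fix an interior point $x_0\in B_{1/2}$ and let $r=d(x_0)/2$ where $d$ is the distance to $\partial B_1$; rescale by setting $\tilde u(x)=u(x_0+rx)/\kappa$ for a normalizing constant $\kappa$ to be chosen. Because the equation $F(D^\sigma_\rho u,x)=f$ rescales to an equation of the same structural form — $\tilde F(D^\sigma_{\tilde\rho}\tilde u,x)=\tilde f$ with $\tilde\rho(x,y)=\rho(x_0+rx, ry)$ still a nice weight with the same $\lambda_\rho,\Lambda_\rho$, with $\tilde f = r^\sigma f(x_0+r\cdot)/\kappa$, and with the constants $[D_M\tilde F(N,\cdot)]_{C^{0,1}_x}$, $\|\partial_x\tilde F\|_\infty$ scaling by factors of $r$ and hence only improving — the hypotheses of Lemma 3.2 are stable under this rescaling provided we choose $\kappa$ so that $|\tilde u|_{\sigma,B_1}\le 1$. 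A natural choice is $\kappa\sim |u|_{\sigma,B_1}^*$ (plus the lower-order data), since $|\tilde u|_{\sigma,B_1} = r^\sigma |u|_{\sigma,B_r(x_0)} \le |u|_{\sigma,B_1}^*$ up to the localization of the seminorm, which is exactly the scaled seminorm from Definition 2.1.

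Applying Lemma 3.2 to $\tilde u$ on $B_{1/2}$ and unscaling yields, at the point $x_0$, a bound of the form $r^{\sigma+\alpha}[u]_{C^{\sigma+\alpha}(B_{r/2}(x_0))}\le C\kappa(1+ r^{-\sigma}\kappa^{-1}D) \le C(|u|_{\sigma,B_1}^* + D)$; taking the supremum over $x_0\in B_{1/2}$ and comparing interior radii controls the scaled norm $Q$ by $C(|u|_{\sigma,B_1}^* + D)$. Now invoke the interpolation Proposition 2.2: for any $\epsilon>0$, $|u|_{\sigma,B_1}^*\le \epsilon Q + C(\epsilon)L$. Choosing $\epsilon$ with $C\epsilon = 1/2$ lets me absorb the $\frac12 Q$ term on the left, giving $Q\le C(D+L)$, and then absorbing $L$ as above finishes the proof. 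The step I expect to be the main nuisance is verifying that all the structural constants of $F$ and $\rho$ — in particular the Lipschitz-in-$x$ seminorms of $D_MF$ and of $\rho$ — behave correctly (i.e. do not blow up) under the rescaling $x\mapsto x_0+rx$, $y\mapsto ry$ with $r\le 1$; this is where the uniformity in $\sigma$ built into the $(2-\sigma)$ normalization and the niceness of $\rho$ (Definition 2.10, giving scale-invariant control of $D_y^\alpha\rho$) must be used carefully, but no essentially new idea is needed beyond bookkeeping.
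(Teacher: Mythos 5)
Your strategy — amplitude-normalize so that Lemma 3.2 applies, observe that this rescaling preserves the structural constants of $F$ (in particular the Lipschitz-in-$x$ seminorm of $D_MF$), and then use the interpolation inequality of Proposition 2.2 to absorb the $\sigma$-seminorm — is exactly the paper's argument. The paper performs a single global renormalization $w=u/|u|_{\sigma,B_1}$ with $G(A,x)=\frac{1}{|u|_{\sigma,B_1}}F(|u|_{\sigma,B_1}A,x)$ and invokes interpolation directly, leaving the covering/scaling that reconciles the scaled seminorms implicit (alluded to by the phrase ``a scaling and interpolation argument''); you spell out that step explicitly with the pointwise rescaling at scale $r=d(x_0)/2$, which is a reasonable way to make the argument airtight. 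One small slip: the factor on the rescaled right-hand side should be $r^{\sigma}\kappa^{-1}D$, not $r^{-\sigma}\kappa^{-1}D$ (since $\tilde f = r^\sigma f(x_0+r\cdot)/\kappa$, so the small $r\le 1$ only helps), but this does not affect your conclusion.
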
 

\begin{proof}
After a regularization procedure, see Section 2 in \cite{CS2}, one can always assume  $|u|_{\sigma,B_1}$ is finite. Define a renormalization of $u$ $$w=u/|u|_{\sigma,B_1}.$$ Then $w$ solves the same type of equation with right-hand side $\frac{1}{|u|_{\sigma,B_1}}f$, boundary datum $\frac{1}{|u|_{\sigma,B_1}}g$ and a new operator $G:S^n\times B_1\to\mathbb{R}$ defined as $$G(A,x)=\frac{1}{|u|_{\sigma,B_1}}F(|u|_{\sigma,B_1}A,x).$$ Note that this is still a smooth, concave, elliptic operator with the same ellipticity constants. Moreover, $$\partial_x G(N,\cdot)=\frac{1}{|u|_{\sigma,B_1}}\partial_x F(|u|_{\sigma,B_1}N,\cdot)$$ and $$ D_M G(N,x)=D_M F(|u|_{\sigma,B_1}N,x).$$ Thus the lemma gives

\begin{align*}\|w\|_{C^{\sigma+\alpha}(B_{1/2})}&\le C(1+\frac{1}{|u|_{\sigma,B_1}}\|f\|_{C^\alpha(B_1)}+\|\partial_x G(\cdot,\cdot)\|_{\infty}\\&+\sup_{N\in S^n}[D_MG(N,\cdot)]_{C^{0,1}_x}+\frac{1}{|u|_{\sigma,B_1}}\|g\|_{\mathcal{L}^{\infty}(\R)})\\&\le C(1+\frac{1}{|u|_{\sigma,B_1}}\|f\|_{C^\alpha(B_1)}+\frac{1}{|u|_{\sigma,B_1}}\|\partial_x F(\cdot,\cdot)\|_{\infty}\\&+\sup_{N\in S^n}[D_MF(N,\cdot)]_{C^{0,1}_x}+\frac{1}{|u|_{\sigma,B_1}}\|g\|_{\mathcal{L}^{\infty}(\R)} ).\end{align*}

Consequently,
$$\|u\|_{C^{\sigma+\alpha}(B_{1/2})}\le C(|u|_{\sigma,B_1}+\|f\|_{C^\alpha(B_1)}+\|\partial_x F(\cdot,\cdot)\|_{\infty}+|u|_{\sigma,B_1}\sup_{N\in S^n}[D_MF(\cdot,\cdot)]_{Lip}+\|g\|_{\mathcal{L}^{\infty}(\R)} ).$$

Now we can choose $\epsilon<\frac{1}{2}C(1+\sup_{N\in S^n}[D_MF(N,\cdot)]_{C^{0,1}_x})$ in the interpolation inequality to get the desired estimate.
\end{proof}

%%%%%%%%%%%%%%%%%%%%%%%%%%%%%%%%%%%%%%%%%%%%%%%%%%%%
\section{Bootstrap to $C^{\infty}$}

According to the previous section, our solution is classical with pointwisely well-defined $D^{\sigma}u(x)$ at least in the interior of the domain. In this section we use a bootstrap strategy to show that this solution is actually smooth in the interior. We  could assume the operator $F$, the weight $\rho$ and the right-hand side $f$ to be in a certain class, say, $C^m$,  and show that the solution belongs to $C^{m+\sigma+\alpha}$. However this would require a careful tracking of all the constants involved and is rather off the point of this paper. Hence we only present the proof for the smoothness of the solution when the operator, the weight and the right-hand side are  all assumed to be smooth. A careful reader can adapt the strategy for less regular operators and/or right-hand sides when necessary.

As mentioned in the Introduction, our method is closely modelled after \cite{BFV}, with a more careful localization procedure. The basic idea is: Suppose $u$ solves an equation in some original  domain $\Omega_0$. Then by the structure of the equation we show that $u$ is in some regularity class $C_1$ in $\Omega_1\subset \Omega_0$. To improve this class we use a cut-off $u^1$ that only sees the part of $u$ inside $\Omega_2''\subset \Omega_1$, hence in particular $u^1$ is in $C_1$, and if the coefficient depends on the solution in a nice way then our $u^1$ would solve a better equation inside some $\Omega_2'\subset \Omega_2''$. This would give a better regularity estimate on $u^1$ inside $\Omega_2\subset \Omega_2'$ in a better class $C_2$. And since our solution $u$ agrees with $u^1$ inside $\Omega_2$, this shows that $u$ itself belongs to $C_2$ in $\Omega_2$. Suppose we can have a strict improvement ($C_2>C_1$) each time without losing too much space ($\Omega_2$ is not too small relative to $\Omega_1$), then this can be iterated until we hit the class of smooth functions.

\subsection{First bootstrap step}By rescaling we have the following estimate 
$$\|u\|_{C^{\sigma+\alpha_1}(B_{R_1})}\le C(R_1,\alpha_1)( \|g\|_{\mathcal{L}^{\infty}(\R)}+\|f\|_{C^{\alpha_1}(B_1)}+\|\partial_x F(\cdot,\cdot)\|_{\infty})$$ where $C$ also depends on  $\sup_{N\in S^n}[D_MF(N,\cdot)]_{C^{0,1}_x}$. Here we fix an $R_1>1/2$ and some $\alpha_1<\overline{\alpha}$. For the localization procudure we further choose some $0<R_2<R_2'<R_2''<R_1$, and let $\eta$ be a cut-off function that is $1$ on $B_{R''_2}$ and vanishes outside $B_{R_1}$. Note that here $B_{R_1}$ is the domain where we have nice estimates, $B_{R_{2}''}$ is the region where the cut-off agrees with our solution, $B_{R_2'}$ is where we have a new equation for the cut-off, and $B_{R_2}$ is domain where we want an improvement of regularity on $u$.

Fix $e\in\mathbb{S}^{n-1}$, for $x\in B_{R'_2}$, $|h|<\frac{1}{4}(R_2''-R_2')$ and $\beta=\min\{\sigma+\alpha_1,1\}$, we define the difference quotient $$w_h(x)=\frac{u(x+he)-u(x)}{|h|^\beta},$$ and further $$w^1_h(x)=\eta(x)w_h(x)$$ and  $$w^2_h(x)=(1-\eta(x))w_h(x).$$ Note that we are taking the $\beta$ order difference quotient because we did not assume $\sigma>\sigma_0>1$. Otherwise we could just take the usual difference quotient.

The $C^{\sigma+\alpha_1}$ estimate gives 
\begin{lem}
$$\|w^1_h\|_{\mathcal{L}^\infty(\R)}\le C(R_1,\alpha_1)( \|g\|_{\mathcal{L}^{\infty}(\R)}+\|f\|_{C^{\alpha_1}(B_1)}+\|\partial_x F(\cdot,\cdot)\|_{\infty}).$$
\end{lem}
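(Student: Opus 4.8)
The goal is to bound $\|w^1_h\|_{\mathcal{L}^\infty(\R)}$, where $w^1_h = \eta w_h$ and $w_h(x) = (u(x+he)-u(x))/|h|^\beta$ with $\beta = \min\{\sigma+\alpha_1,1\} \le 1$ and $\eta$ supported in $B_{R_1}$.

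The plan is as follows. Since $\eta$ vanishes outside $B_{R_1}$, it suffices to bound $|w_h(x)|$ for $x \in B_{R_1}$ — more precisely, for $x$ in the support of $\eta$, and we only need the difference $u(x+he)-u(x)$ for $x$ and $x+he$ within the region controlled by the $C^{\sigma+\alpha_1}$ estimate. Given $|h| < \frac14(R_2''-R_2')$, both points stay inside $B_{R_1}$ provided $x$ is in $\mathrm{supp}\,\eta$. So first I would invoke the rescaled $C^{\sigma+\alpha_1}$ estimate stated at the start of the subsection, which gives $\|u\|_{C^{\sigma+\alpha_1}(B_{R_1})} \le C(R_1,\alpha_1)(\|g\|_{\mathcal{L}^\infty} + \|f\|_{C^{\alpha_1}(B_1)} + \|\partial_x F\|_\infty)$.

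Next, since $\beta \le 1$ and $\beta \le \sigma+\alpha_1$, a $C^{\sigma+\alpha_1}$ (hence $C^{\beta}$, or at worst Lipschitz if $\beta=1$) function satisfies $|u(x+he)-u(x)| \le [u]_{C^\beta(B_{R_1})}|h|^\beta \le \|u\|_{C^{\sigma+\alpha_1}(B_{R_1})}|h|^\beta$ for all $x, x+he \in B_{R_1}$. Dividing by $|h|^\beta$ yields $|w_h(x)| \le \|u\|_{C^{\sigma+\alpha_1}(B_{R_1})}$ for such $x$. Then multiplying by $\eta$ (with $0 \le \eta \le 1$ and $\mathrm{supp}\,\eta \subset B_{R_1}$) gives $\|w^1_h\|_{\mathcal{L}^\infty(\R)} \le \|u\|_{C^{\sigma+\alpha_1}(B_{R_1})}$, and combining with the previous display finishes the proof.

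I do not expect any serious obstacle here: this is a routine consequence of the interior $C^{\sigma+\alpha_1}$ estimate together with the elementary fact that a $\beta$-Hölder function has bounded $\beta$-order difference quotients. The only mild point of care is bookkeeping with the domains — checking that $x \in \mathrm{supp}\,\eta \subset B_{R_1}$ and $|h| < \frac14(R_2''-R_2') < R_1 - R_2''$ together guarantee $x+he \in B_{R_1}$, so that the Hölder estimate on $B_{R_1}$ actually applies to both points. I would write out the proof in three or four lines along exactly these lines.

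\begin{proof}
By the rescaled $C^{\sigma+\alpha_1}$ estimate, $u\in C^{\sigma+\alpha_1}(B_{R_1})$ with
$$\|u\|_{C^{\sigma+\alpha_1}(B_{R_1})}\le C(R_1,\alpha_1)( \|g\|_{\mathcal{L}^{\infty}(\R)}+\|f\|_{C^{\alpha_1}(B_1)}+\|\partial_x F(\cdot,\cdot)\|_{\infty}).$$
Since $\beta=\min\{\sigma+\alpha_1,1\}\le \sigma+\alpha_1$ and $\beta\le 1$, the function $u$ is in particular $C^\beta(B_{R_1})$ with $[u]_{C^\beta(B_{R_1})}\le \|u\|_{C^{\sigma+\alpha_1}(B_{R_1})}$. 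If $x\notin B_{R_1}$ then $\eta(x)=0$ and $w^1_h(x)=0$. If $x\in B_{R_1}\cap\operatorname{supp}\eta$, then since $|h|<\frac14(R_2''-R_2')<R_1-R_2''$ we have $x+he\in B_{R_1}$, so
$$|u(x+he)-u(x)|\le [u]_{C^\beta(B_{R_1})}|h|^\beta\le \|u\|_{C^{\sigma+\alpha_1}(B_{R_1})}|h|^\beta,$$
and therefore $|w_h(x)|\le \|u\|_{C^{\sigma+\alpha_1}(B_{R_1})}$. Since $0\le\eta\le 1$, in all cases $|w^1_h(x)|\le \|u\|_{C^{\sigma+\alpha_1}(B_{R_1})}$, and combining with the estimate above gives the claim.
\end{proof}
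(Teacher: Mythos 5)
Your approach is exactly what the paper intends: the paper gives no explicit proof, merely prefacing the lemma with ``The $C^{\sigma+\alpha_1}$ estimate gives'', so your reconstruction is the right one. Bounding the $\beta$-order difference quotient by the $C^\beta$ seminorm of $u$ on $B_{R_1}$ (using $\beta\le\min\{1,\sigma+\alpha_1\}$) and then invoking the rescaled $C^{\sigma+\alpha_1}$ estimate is precisely the content of the lemma.

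One small point of care in your domain bookkeeping: your inequality chain $|h|<\tfrac14(R_2''-R_2')<R_1-R_2''$ only guarantees $x+he\in B_{R_1}$ when $|x|\le R_2''$, but $\operatorname{supp}\eta$ can lie anywhere in $\overline B_{R_1}$ (it is only required to equal $1$ on $B_{R_2''}$ and vanish outside $B_{R_1}$). To make the step airtight you should either fix $\eta$ to be supported in some intermediate ball $B_{\widetilde R}$ with $R_2''<\widetilde R<R_1$ and then require $|h|<R_1-\widetilde R$ (a constraint compatible with the one the paper imposes), or observe that a smooth cut-off vanishing outside $B_{R_1}$ in fact has compact support in $B_{R_1}$, so there is some $\delta>0$ with $\operatorname{supp}\eta\subset B_{R_1-\delta}$, and one takes $|h|<\delta$. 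This is a cosmetic fix and does not affect the substance of the argument.
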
 

Moreover it solves an elliptic equation:

By taking the difference of $F(D^{\sigma}u,x+he)=f(x+he)$ and  $F(D^{\sigma}u,x)=f(x)$ one sees that for $x\in B_{R_2'}$, 
$$A^h_{ij}(x)(D^{\sigma}_{ij}u(x+he)-D^{\sigma}_{ij}u(x))=f(x+he)-f(x)+k_h(x)h,$$where 
$$A^h_{ij}(x)=\int_0^1\frac{\partial F}{\partial M_{ij}}(tD^\sigma u (x+he)+(1-t)D^\sigma u (x), x+the)dt,$$and $$k_h(x)=\int_0^1\partial_xF(tD^\sigma u(x+he)+(1-t)D^\sigma u(x), x+the)dt\cdot e.$$

Hence if we define the matrix $A^h(x)=(A^h_{ij}(x))$ then one has
\begin{align*}\int(\delta u(x+he,y)-\delta u(x,y))&\frac{(2-\sigma)\langle A^h(x)y,y\rangle \rho(x+he,y)}{|y|^{n+\sigma+2}}dy\\&+\int\delta u(x,y)(\rho(x+he,y)-\rho(x,y))\frac{(2-\sigma)\langle A^h(x)y,y\rangle}{|y|^{n+\sigma+2}}dy\\&=f(x+he)-f(x)+k_h(x) \end{align*}

As a result the difference quotient solves, in $B_{R_2'}$, the following equation 
\begin{align*}\int \delta w_h(x,y)&\frac{(2-\sigma)\langle A^h(x)y,y\rangle\rho(x+he,y)}{|y|^{n+\sigma+2}}dy=\frac{f(x+he)-f(x)}{|h|^\beta}+k_h(x)|h|^{1-\beta}\\&-\int\delta u(x,y)\frac{\rho(x+he,y)-\rho(x,y)}{|h|^{\beta}}\frac{(2-\sigma)\langle A^h(x)y,y\rangle}{|y|^{n+\sigma+2}}dy\\&= \frac{f(x+he)-f(x)}{|h|^\beta}+k_h(x)|h|^{1-\beta}-I^1_h(x).\end{align*} Thus the cut-off satisfies
\begin{align*}
\int \delta w^1_h(x,y)\frac{(2-\sigma)\langle A^h(x)y,y\rangle\rho(x+he,y)}{|y|^{n+\sigma+2}}dy&=\frac{f(x+he)-f(x)}{|h|^\beta}+k_h(x)|h|^{1-\beta}-I^1_h(x)\\&-\int \delta w^2_h(x,y)\frac{(2-\sigma)\langle A^h(x)y,y\rangle\rho(x+he,y)}{|y|^{n+\sigma+2}}dy\\&=\frac{f(x+he)-f(x)}{|h|^\beta}+k_h(x)|h|^{1-\beta}-I^1_h(x)-I^2_h(x).
\end{align*} 

To conclude we have\begin{lem}
$w_h^1$ is a bounded continuous function solving  \begin{equation}
\int \delta w^1_h(x,y)\frac{(2-\sigma)\langle A^h(x)y,y\rangle\rho(x+he,y)}{|y|^{n+\sigma+2}}dy=\frac{f(x+he)-f(x)}{|h|^\beta}+k_h(x)|h|^{1-\beta}-I^1_h(x)-I^2_h(x)\end{equation} in $B_{R_2'}$, where $$I^1_h(x)= \int\delta u(x,y)\frac{\rho(x+he,y)-\rho(x,y)}{|h|^{\beta}}\frac{(2-\sigma)\langle A^h(x)y,y\rangle}{|y|^{n+\sigma+2}}dy$$ and $$I_h^2(x)=\int \delta w^2_h(x,y)\frac{(2-\sigma)\langle A^h(x)y,y\rangle\rho(x+he,y)}{|y|^{n+\sigma+2}}dy.$$
\end{lem}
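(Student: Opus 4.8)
The plan is to read off (4.1) from the equation $F(D^\sigma u,\cdot)=f$ by linearizing along the segment joining $(D^\sigma u(x),x)$ to $(D^\sigma u(x+he),x+he)$, then separating the variation of the weight $\rho$, dividing by $|h|^\beta$, and finally decomposing $w_h=w_h^1+w_h^2$; the computation preceding the statement is precisely this formal calculation, so the real work is to justify each manipulation. By Proposition 3.3 and the rescaled estimate recorded at the start of the section, $u$ is classical in $B_{R_1}$, so $D^\sigma u(x)$ is defined pointwise there and $F(D^\sigma u(x),x)=f(x)$ holds at every interior point; in particular it holds at $x$ and at $x+he$ whenever $x\in B_{R_2'}$ and $|h|<\frac{1}{4}(R_2''-R_2')$, since then $x+he\in B_{R_1}$. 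Subtracting the two identities and writing $F(D^\sigma u(x+he),x+he)-F(D^\sigma u(x),x)=\int_0^1\frac{d}{dt}F(tD^\sigma u(x+he)+(1-t)D^\sigma u(x),x+the)\,dt$, the chain rule produces precisely $A^h_{ij}(x)(D^\sigma_{ij}u(x+he)-D^\sigma_{ij}u(x))+k_h(x)h$. Smoothness and uniform ellipticity of $F$ make the symmetric matrix $A^h(x)$ lie between $\lambda I$ and $\Lambda I$, while $\|\partial_x F\|_\infty<\infty$ makes $k_h$ a bounded function, so one obtains the pointwise identity $A^h_{ij}(x)(D^\sigma_{ij}u(x+he)-D^\sigma_{ij}u(x))=f(x+he)-f(x)+k_h(x)h$ in $B_{R_2'}$.

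Next I would expand the two $D^\sigma_{ij}$ terms using Definition 2.3, contract against $A^h_{ij}$ via $A^h_{ij}\langle y,e_i\rangle\langle y,e_j\rangle=\langle A^h(x)y,y\rangle$, and split $\delta u(x+he,y)\rho(x+he,y)-\delta u(x,y)\rho(x,y)=(\delta u(x+he,y)-\delta u(x,y))\rho(x+he,y)+\delta u(x,y)(\rho(x+he,y)-\rho(x,y))$. This yields the key integral identity; dividing through by $|h|^\beta$ and using $\delta w_h(x,y)=|h|^{-\beta}(\delta u(x+he,y)-\delta u(x,y))$ gives the equation for $w_h$ together with the term $I_h^1$, after which $\delta w_h=\delta w_h^1+\delta w_h^2$ and transposing the $w_h^2$-part as $-I_h^2$ produces (4.1) exactly. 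Boundedness of $w_h^1$ is Lemma 4.1; continuity is immediate, since $u\in C(B_1)\cap\mathcal{L}^\infty(\R)$ by Proposition 2.9 implies $w_h$ is continuous on $B_{R_1}$ for $|h|$ small, while $\eta$ is smooth with support in $B_{R_1}$, so $w_h^1$ is continuous on all of $\R$.

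What remains is to confirm that every integral written down converges, and this is where the gain from Section 3 enters. For $x\in B_{R_2'}$ the $C^{\sigma+\alpha_1}(B_{R_1})$ estimate gives $|\delta u(x,y)|\le C|y|^{\min\{\sigma+\alpha_1,\,2\}}$ for small $|y|$ and $|\delta u(x,y)|\le 4\|u\|_{\mathcal{L}^\infty(\R)}$ for large $|y|$; together with $|A^h(x)|\le\Lambda$, $|\rho(x+he,y)-\rho(x,y)|\le\sup_{y}[\rho(\cdot,y)]_{C^{0,1}_x}|h|$, and $\beta\le1$ (so that $|h|^{1-\beta}$ stays bounded), the integrand of $I_h^1$ is an integrable power of $|y|$ near the origin and is $O(|y|^{-n-\sigma})$ at infinity, so $I_h^1(x)$ is finite; the same local regularity, applied to $w_h^1$, makes the principal integral $\int\delta w_h^1(x,y)\frac{(2-\sigma)\langle A^h(x)y,y\rangle\rho(x+he,y)}{|y|^{n+\sigma+2}}\,dy$ converge. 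For $I_h^2$, note that $w_h^2=(1-\eta)w_h$ vanishes on $B_{R_2''}$, so for $x\in B_{R_2'}$ the quantity $\delta w_h^2(x,y)$ vanishes for small $|y|$ and $I_h^2(x)$ reduces to an integral over $|y|$ bounded below of a function dominated by $C|h|^{-\beta}\|u\|_{\mathcal{L}^\infty(\R)}$ against a kernel of size $O(|y|^{-n-\sigma})$, hence is finite for each fixed $h\ne0$. I do not expect a genuine obstacle here: the lemma is essentially a bookkeeping statement packaging the displayed calculation, and the only care needed is in the domains---ensuring $x+he$ lies where the equation for $u$ holds---and in checking convergence of the singular integrals just described.
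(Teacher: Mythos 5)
Your proposal is correct and matches the paper's approach: the paper's "proof" of this lemma is exactly the linearization-via-FTC, separation of the $\rho$-variation, division by $|h|^\beta$, and $w_h=w_h^1+w_h^2$ decomposition carried out in the paragraphs preceding the lemma statement, and you reproduce this faithfully. Your additional checks (that $D^\sigma u$ is defined pointwise in $B_{R_1}$, that $x+he$ stays inside $B_{R_1}$, continuity of $w_h^1$, and convergence of the principal integral, $I_h^1$, and $I_h^2$ using the $C^{\sigma+\alpha_1}$ interior estimate near $y=0$ and the cut-off at infinity) are sound and merely make explicit what the paper leaves tacit.
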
 
We want to apply Theorem 3.1 to this equation. For the various estimate the following is useful.  It says the regularity of the coefficients improves because of  previous estimate on the solution.

\begin{lem}
$A^h\in C^{\alpha_1}(B_{R_2'})$ with estimate independent of $h$ and $\beta$.
\end{lem}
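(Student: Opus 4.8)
The plan is to read the regularity of $A^h$ off from that of $D^\sigma u$ together with the smoothness of $F$, exactly as in the second order case, the only real task being to keep track of which regions the earlier estimates are available in. First note that $\beta$ does not appear in the definition of $A^h$ at all, so ``independent of $\beta$'' is automatic; it suffices to prove a bound on $\|A^h\|_{C^{\alpha_1}(B_{R_2'})}$ that does not depend on the admissible $h$.

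\emph{Step 1: locate $D^\sigma u$ in a H\"older class.} By the rescaled $C^{\sigma+\alpha_1}$ estimate recorded at the beginning of this section, $u\in\mathcal{L}^\infty(\R)\cap C^{\sigma+\alpha_1}(B_{R_1})$ with a controlled norm. Applying Proposition 2.7 after translation and scaling on a finite collection of small balls covering $B_{R_2''}$ (each compactly inside $B_{R_1}$), we obtain $D^\sigma u\in C^{\alpha_1}(B_{R_2''})$ together with a bound on $[D^\sigma u]_{C^{\alpha_1}(B_{R_2''})}+\|D^\sigma u\|_{\mathcal{L}^\infty(B_{R_2''})}$ depending only on $R_1,R_2'',\alpha_1$ and the quantities on the right side of the $C^{\sigma+\alpha_1}$ estimate. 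Because $|h|<\tfrac14(R_2''-R_2')$, for every $x\in B_{R_2'}$ and every $t\in[0,1]$ the points $x$, $x+he$ and $x+the$ all lie in $B_{R_2''}$; hence the matrix $tD^\sigma u(x+he)+(1-t)D^\sigma u(x)$ is well-defined, lies in a fixed compact set $K\subset S^n$ (by the $\mathcal{L}^\infty$ bound on $D^\sigma u$), and is $C^{\alpha_1}$ in $x$ with seminorm $\le[D^\sigma u]_{C^{\alpha_1}(B_{R_2''})}$, uniformly in $t\in[0,1]$ and in the admissible $h$.

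\emph{Step 2: compose with $F$ and integrate in $t$.} On the compact set $K\times\overline{B_{R_2''}}$ the smooth map $D_MF$ is Lipschitz; let $L$ be a Lipschitz constant. For $x,x'\in B_{R_2'}$ I would bring the difference $A^h_{ij}(x)-A^h_{ij}(x')$ inside the $t$-integral and apply this Lipschitz bound to the two arguments of $\partial F/\partial M_{ij}$: the matrix argument changes by at most $[D^\sigma u]_{C^{\alpha_1}(B_{R_2''})}|x-x'|^{\alpha_1}$ (using $t,1-t\le1$ and that $x,x',x+he,x'+he\in B_{R_2''}$), while the space argument $x+the$ changes by exactly $|x-x'|\le(2R_1)^{1-\alpha_1}|x-x'|^{\alpha_1}$. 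Integrating over $t\in[0,1]$ preserves the estimate, giving $[A^h]_{C^{\alpha_1}(B_{R_2'})}\le L\big([D^\sigma u]_{C^{\alpha_1}(B_{R_2''})}+(2R_1)^{1-\alpha_1}\big)$, and $\|A^h\|_{\mathcal{L}^\infty(B_{R_2'})}\le\sup_{K\times\overline{B_{R_2''}}}|D_MF|\le\Lambda$ is immediate. None of these constants depends on $h$ or $\beta$.

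The only step requiring care — which I would regard as the ``main obstacle,'' though it is bookkeeping rather than a genuine difficulty — is the domain juggling in Step 1: one must use the strict inclusions $B_{R_2'}\subset B_{R_2''}\subset B_{R_1}$ and the restriction $|h|<\tfrac14(R_2''-R_2')$ to ensure that $D^\sigma u$, its H\"older seminorm estimate, and the compactness of the range of $D^\sigma u$ are all simultaneously available at every shifted point $x+the$ with $x\in B_{R_2'}$, $t\in[0,1]$ and $h$ admissible. Once that is arranged, the remainder is the standard ``H\"older $\circ$ smooth $=$ H\"older'' computation, with the parameters $t$ and $h$ carried along harmlessly.
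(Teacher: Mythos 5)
Your proposal is correct and follows essentially the same route as the paper's proof: both use the integral representation of $A^h$, the Lipschitz/$C^2$ bound on $\partial F/\partial M_{ij}$, and the $C^{\alpha_1}$ regularity of $D^\sigma u$ coming from the $C^{\sigma+\alpha_1}$ estimate and Proposition 2.7. Your extra care about the domain inclusions $B_{R_2'}\subset B_{R_2''}\subset B_{R_1}$ and the compact range of $D^\sigma u$ is a mild refinement of what the paper leaves implicit (the paper simply invokes $\|F\|_{C^2}$), not a different argument.
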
  

\begin{proof}
Obviously $\|A^h\|_{\infty}\le \|F\|_{C^1}$. For the H\"older semi-norm
\begin{align*}
|A^h_{ij}(x)-A^h_{ij}(x')|&\le \int_0^1|\frac{\partial F}{\partial M_{ij}}(tD^\sigma u (x+he)+(1-t)D^\sigma u (x), x+the)\\&-\frac{\partial F}{\partial M_{ij}}(tD^\sigma u (x'+he)+(1-t)D^\sigma u (x'), x'+the)|dt\\&\le \|F\|_{C^2}(|D^{\sigma}u(x+he)-D^{\sigma}u(x'+he)|+|D^{\sigma}u(x)-D^{\sigma}u(x')|+|x-x'|)\\&\le C\|F\|_{C^2}(\|u\|_{C^{\sigma+\alpha_1}(B_{R_1})}|x-x'|^{\alpha_1}+|x-x'|)\end{align*} for some $C$ independent of $h$ and $\beta$. Note that we used Proposition 2.6 for the last inequality.
\end{proof}

Now we check conditions (3.1) and (3.2).
For condition (3.1), note that 
\begin{align*}|\frac{\langle A^h(x)y,y\rangle\rho(x+he,y)}{|y|^{n+\sigma+2}}&-\frac{\langle A^h(x')y,y\rangle\rho(x'+he,y)}{|y|^{n+\sigma+2}}|\le\frac{1}{|y|^{n+\sigma+2}}|\rho(x+he,y)\langle (A^h(x)-A^h(x'))y,y\rangle|\\&+\frac{1}{|y|^{n+\sigma+2}}|\langle A^h(x')y,y\rangle(\rho(x+he,y)-\rho(x'+he,y))|\\ &\le \frac{\Lambda_\rho}{|y|^{n+\sigma}}|A^h(x)-A^h(x')|\\&+\frac{|A^h(x)|}{|y|^{n+\sigma}}|(\rho(x+he,y)-\rho(x'+he,y)|\\&\le\frac{\Lambda_\rho\|F\|_{C^2}}{|y|^{n+\sigma}}(|x-x'|+2[D^\sigma u]_{C^{\alpha_1}(B_{R_2'})}|x-x'|^{\alpha_1})+\frac{\Lambda}{|y|^{n+\sigma}}[\rho(\cdot,y)]_{C^{0,1}_x}|x-x'|.
\end{align*}  This gives condition (3.1) with H\"older exponent $\alpha_1$ with estimate independent of $h$ and $\beta$. 

Condition (3.2) again follows from the decay condition on $\rho$.

%simply note that $\lambda\le A^h\le\Lambda$ due to the ellipticity of $F$, hence 
%$$\frac{\lambda_\rho\lambda}{|y|^{n+\sigma}}\le\frac{\langle A^h(x)y,y\rangle\rho(x+he,y)}{|y|^{n+\sigma+2}}\le\frac{\Lambda_\rho\Lambda}{|y|^{n+\sigma}}$$uniformly in $h,\beta$ and $x$. 

Now we estimate the H\"older norm of the right-hand side. The $\alpha_1$ H\"older norm of $\frac{f(\cdot+he)-f(\cdot)}{|h|^\beta}$ is clearly bounded by the $C^{1,1}$ norm of $f$. The estimate for the other terms takes more effort.

\begin{lem}
$k_h\in C^{\alpha_1}(B_{R_2'})$ with estimate independent of $h$ and $\beta$.
\end{lem}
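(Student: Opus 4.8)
The plan is to estimate the two pieces of $k_h$ separately, exactly mirroring the proof of Lemma 4.4 for $A^h$. Recall
$$k_h(x)=\int_0^1\partial_xF\bigl(tD^\sigma u(x+he)+(1-t)D^\sigma u(x),\,x+the\bigr)\cdot e\,dt.$$
First I would record the uniform bound $\|k_h\|_\infty\le\|\partial_xF\|_\infty\le\|F\|_{C^1}$, which is immediate since $\|D^\sigma u\|_\infty\le\Lambda_\rho$ on $B_{R_1}$. Then, to get the $C^{\alpha_1}$ seminorm, I would write, for $x,x'\in B_{R_2'}$,
\begin{align*}
|k_h(x)-k_h(x')|&\le\int_0^1\bigl|\partial_xF\bigl(tD^\sigma u(x+he)+(1-t)D^\sigma u(x),x+the\bigr)\\
&\qquad-\partial_xF\bigl(tD^\sigma u(x'+he)+(1-t)D^\sigma u(x'),x'+the\bigr)\bigr|\,dt.
\end{align*}
Since $F$ is smooth, $\partial_xF$ is Lipschitz in both arguments on the compact set $\{\|M\|\le\Lambda_\rho\}\times\overline{B_1}$, with Lipschitz constant controlled by $\|F\|_{C^2}$. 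Hence the integrand is bounded by
$$\|F\|_{C^2}\bigl(|D^\sigma u(x+he)-D^\sigma u(x'+he)|+|D^\sigma u(x)-D^\sigma u(x')|+|x-x'|\bigr).$$
Now I invoke Proposition 2.6: $u\in C^{\sigma+\alpha_1}(B_{R_1})\cap\mathcal{L}^\infty(\R)$ implies $D^\sigma u\in C^{\alpha_1}$ on a slightly smaller ball, and since $|h|<\frac14(R_2''-R_2')$ the points $x+he,\,x'+he$ stay inside that ball; therefore $|D^\sigma u(x+he)-D^\sigma u(x'+he)|\le[D^\sigma u]_{C^{\alpha_1}}|x-x'|^{\alpha_1}$, and likewise without the shift. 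Combining, and using $|x-x'|\le C|x-x'|^{\alpha_1}$ on the bounded domain, gives
$$|k_h(x)-k_h(x')|\le C\|F\|_{C^2}\bigl(1+[D^\sigma u]_{C^{\alpha_1}(B_{R_2'})}\bigr)|x-x'|^{\alpha_1},$$
a bound independent of $h$ and of $\beta$, as claimed. The constant $[D^\sigma u]_{C^{\alpha_1}(B_{R_2'})}$ is controlled by $\|u\|_{C^{\sigma+\alpha_1}(B_{R_1})}$ through Proposition 2.6, which in turn is controlled by the data via the $C^{\sigma+\alpha}$ estimate of Proposition 3.3.

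The only genuinely delicate point — and the step I expect to need the most care — is verifying that the shifted arguments $x+he$ and $x'+he$ remain in the region where Proposition 2.6 gives the $C^{\alpha_1}$ bound on $D^\sigma u$; this is precisely why the restriction $|h|<\frac14(R_2''-R_2')$ and the nested balls $B_{R_2}\subset B_{R_2'}\subset B_{R_2''}\subset B_{R_1}$ were set up, and one just has to check the radii line up (after the rescaling that turned $B_{1/2}$ into $B_{R_1}$). Everything else is a routine consequence of the smoothness of $F$ and the already-established interior regularity of $u$; in particular no new use of the equation, the concavity, or the nocel structure is needed here — this lemma is purely a statement about the regularity of the coefficient built from $u$.
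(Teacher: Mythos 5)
Your proof is correct and follows essentially the same route as the paper: bound $\|k_h\|_\infty$ by $\|\partial_x F\|_\infty$, then use the Lipschitz continuity of $\partial_x F$ (controlled by $\|F\|_{C^2}$) together with Proposition 2.6 and the $C^{\sigma+\alpha_1}$ estimate on $u$ to obtain the $C^{\alpha_1}$ seminorm bound, with constants independent of $h$ and $\beta$. The extra care you flag about the shifted points $x+he$, $x'+he$ staying in the domain where $D^\sigma u\in C^{\alpha_1}$ is exactly what the nested radii $R_2<R_2'<R_2''<R_1$ and the restriction $|h|<\tfrac14(R_2''-R_2')$ are set up to guarantee.
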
 

\begin{proof}
Obviously $\|h_k\|_{\infty}\le\|\partial_x F\|_\infty.$  For the H\"older semi-norm one uses the smoothness of $F$ and the $C^{\sigma+\alpha_1}$-estimates together with Proposition 2.6.
\begin{align*}
|k_h(x)-k_h(x')|&=|\int_0^1 (\partial_x F(tD^\sigma u(x+he)+(1-t)D^\sigma u(x),x+the)-\\&\partial_x F(tD^\sigma u(x'+he)+(1-t)D^\sigma u(x'),x'+the)dt|\\&\le \|F\|_{C^2}(|D^\sigma u(x+he)-D^\sigma u(x'+he)|+|D^\sigma u(x)-D^\sigma u(x')|+|x-x'|)\\&\le C\|F\|_{C^2}([u]_{C^{\sigma+\alpha_1}(R_1)}|x-x'|^{\alpha_1}+|x-x'|).
\end{align*} Here the constants also depends on $R_1, R_2'$ as well as regularity of $\rho$, but nevertheless independent of $h$ and $\beta$.
\end{proof}

\begin{lem}
$I^1_h\in C^{\alpha_2}(B_{R_2'})$ for some $\alpha_2>0$ with estimate independent of $h$ and $\beta$.
\end{lem}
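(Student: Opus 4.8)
The plan is to treat $I^1_h$ as a single nonlocal operator of order $\sigma$ acting on the already-regular function $u$, namely to write
\[
I^1_h(x)=\int\delta u(x,y)\,K_h(x,y)\,dy,\qquad K_h(x,y)=\frac{\rho(x+he,y)-\rho(x,y)}{|h|^{\beta}}\cdot\frac{(2-\sigma)\langle A^h(x)y,y\rangle}{|y|^{n+\sigma+2}},
\]
and then to run the argument of Proposition 2.6. The first step is to record the needed bounds on $K_h$ \emph{uniformly in $h$ and $\beta$}. Set $\Phi_h(x,y)=\bigl(\rho(x+he,y)-\rho(x,y)\bigr)/|h|^{\beta}$. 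The key observation is that although $\Phi_h$ carries the factor $|h|^{-\beta}$, the $C^{1,1}$ regularity of $\rho$ in $x$ gives $|\Phi_h(x,y)|\le\|\partial_x\rho\|_\infty|h|^{1-\beta}$ and $|\Phi_h(x,y)-\Phi_h(x',y)|\le\|D_x^2\rho\|_\infty|h|^{1-\beta}|x-x'|$, so that (as $\beta\le1$ and $|h|$ is small) $\Phi_h$ is bounded and Lipschitz in $x$ with constants independent of $h,\beta$, while it inherits decay and smoothness in $y$ from the niceness of $\rho$. Together with the ellipticity bound $\lambda I\le A^h\le\Lambda I$ and with $A^h\in C^{\alpha_1}(B_{R_2'})$ from Lemma 4.5, this yields, uniformly in $h$ and $\beta$,
\[
|K_h(x,y)|\le C\frac{2-\sigma}{|y|^{n+\sigma}},\qquad [K_h(\cdot,y)]_{C^{\alpha_1}_x}\le C\frac{2-\sigma}{|y|^{n+\sigma}},\qquad |\nabla_y K_h(x,y)|\le C\frac{2-\sigma}{|y|^{n+\sigma+1}}.
\]

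Next I would localize. Write $u=u^1+u^2$ with $u^1=\zeta u$, where $\zeta$ is a smooth cutoff equal to $1$ on $B_{\bar R}$ and supported in $B_{R_1}$ for some $R_2'<\bar R<R_1$; then $\|u^1\|_{C^{\sigma+\alpha_1}(\R)}\le C\|u\|_{C^{\sigma+\alpha_1}(B_{R_1})}$, $\|u^2\|_{\mathcal{L}^{\infty}(\R)}\le\|u\|_{\mathcal{L}^{\infty}(\R)}$, and for $x\in B_{R_2'}$ one has $u^2(x)=0$ and $\delta u^2(x,y)=u^2(x+y)+u^2(x-y)$, which vanishes for $|y|\le c_0:=\bar R-R_2'$. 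Accordingly $I^1_h=J^1_h+J^2_h$ with $J^1_h(x)=\int\delta u^1(x,y)K_h(x,y)\,dy$ and $J^2_h(x)=\int_{|y|>c_0}\delta u^2(x,y)K_h(x,y)\,dy$. For $J^1_h$ I would simply repeat the estimate of $I_1+I_2$ in the proof of Proposition 2.6: splitting $J^1_h(x)-J^1_h(x')$ as $\int(\delta u^1(x,y)-\delta u^1(x',y))K_h(x,y)\,dy+\int\delta u^1(x',y)(K_h(x,y)-K_h(x',y))\,dy$, the first integral is $\le C\|u^1\|_{C^{\sigma+\alpha_1}(\R)}|x-x'|^{\alpha_1}$ by the standard bound $\int|\delta u^1(x,y)-\delta u^1(x',y)|\frac{2-\sigma}{|y|^{n+\sigma}}\,dy\le C\|u^1\|_{C^{\sigma+\alpha_1}}|x-x'|^{\alpha_1}$ (positivity of $K_h$ plays no role), and the second is $\le C\|u^1\|_{\mathcal{L}^{\infty}}|x-x'|^{\alpha_1}$ by the $x$-H\"older bound on $K_h$.

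For $J^2_h$ the density $u^2$ is only $\mathcal{L}^{\infty}$, so I would move the $x$-dependence entirely onto the kernel: since $u^2\equiv0$ on $B_{R_2'}$, the change of variables $w=x\pm y$ gives $J^2_h(x)=\int_{|w-x|>c_0}u^2(w)\bigl(K_h(x,w-x)+K_h(x,x-w)\bigr)\,dw$, an integral of a bounded density against a kernel that is $C^{\alpha_1}$ in the base point $x$, smooth in the increment $w-x$ with the decay recorded above, and supported where $|w-x|>c_0$; estimating the kernel at $x$ versus $x'$ by the triangle inequality — separating its $C^{\alpha_1}$ dependence on the base point from its Lipschitz dependence on the increment $w-x$, and handling the thin shell $\bigl|\,|w-x|-c_0\,\bigr|\lesssim|x-x'|$ separately (where $|K_h|\le C(2-\sigma)c_0^{-n-\sigma}$) — yields $|J^2_h(x)-J^2_h(x')|\le C\|u^2\|_{\mathcal{L}^{\infty}(\R)}|x-x'|^{\alpha_1}$. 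Combining the two pieces proves the lemma with $\alpha_2=\alpha_1$ and $\|I^1_h\|_{C^{\alpha_1}(B_{R_2'})}\le C(\|u\|_{C^{\sigma+\alpha_1}(B_{R_1})}+\|g\|_{\mathcal{L}^{\infty}(\R)})$, the constant depending on the data of the problem but not on $h$ or $\beta$.

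The one genuine difficulty is the uniformity: one must check that boundedness, $x$-H\"older continuity with $|y|^{-n-\sigma}$ decay, and $y$-regularity away from the origin of $K_h$ all survive division by $|h|^{\beta}$ as $h\to0$. This is precisely where the $C^{1,1}$-in-$x$ regularity and the niceness (in $y$) of $\rho$ are needed, together with the gain of regularity for $A^h$ in Lemma 4.5; once these uniform kernel bounds are secured, the rest is the same near/far splitting already carried out for $D^\sigma u$ in Proposition 2.6.
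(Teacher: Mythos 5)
Your proof is correct, and it reaches the same estimate as the paper via a closely related but reorganized route. Both arguments hinge on the same three ingredients: uniform-in-$h,\beta$ boundedness and $x$-Lipschitz continuity of $\Phi_h(x,y)=(\rho(x+he,y)-\rho(x,y))/|h|^\beta$ (which survives the division by $|h|^\beta$ because $\rho\in C^{1,1}_x$ and $\beta\le1$); the $C^{\alpha_1}$ estimate on $A^h$ (that is Lemma~4.3, not~4.5 as you cite); and the localization $u=u^1+u^2$ to cope with the fact that $u$ is only $\mathcal{L}^\infty$ far away. The bookkeeping differs. The paper first factors $A^h_{ij}(x)$ out of the integral, writing $I^1_h(x)=A^h_{ij}(x)\,J(x)$ (legitimate because $A^h$ is $y$-independent), estimates the H\"older seminorm of the $A^h$-free integral $J$, and multiplies at the end; you instead keep $A^h$ inside the kernel $K_h$, which forces you to prove the full set of uniform kernel bounds (pointwise, $x$-H\"older, $y$-Lipschitz away from the origin) but then lets you invoke the Proposition~2.6 template verbatim on the near part and a change-of-variables plus kernel-smoothness argument on the far part. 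Your handling of the far field is actually more explicit than the paper's: in estimating $|J(x)-J(x')|$ the paper bounds the term $\int(\delta u(x,y)-\delta u(x',y))\Phi_h\cdots\,dy$ by a quantity involving only $\|u\|_{C^{\sigma+\alpha_1}(B_{R_1})}$, without spelling out where the far contribution seeing only $\|u\|_\infty$ is absorbed; your near/far split of $u$ inside the H\"older estimate supplies that missing detail. You also conclude with $\alpha_2=\alpha_1$, sharper than the paper's conservative ``$\alpha_2<\alpha_1^2$''; the loss in the paper is unnecessary since the product of two $C^{\alpha_1}$ functions is $C^{\alpha_1}$. One small caveat: your $\nabla_y K_h$ bound tacitly uses control of the mixed derivative $\partial_x\nabla_y\rho$ (needed to differentiate $\Phi_h$ in $y$ and still divide safely by $|h|^\beta$), which the stated niceness condition on $\rho$ does not explicitly give; however the paper's own verification of condition~(3.2) makes the same implicit use of the smoothness of $\rho$, and since you only invoke this bound where $|y|>c_0$, mere boundedness there suffices.
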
 

\begin{proof}
For $x\in B_{R_2'}$,
\begin{align*}
|I^1_h(x)|&\le \int_{|y|\le 1/8(R_1-R_2')}|\frac{\rho(x+he,y)-\rho(x,y)}{|h|^{\beta}}||\delta u(x,y)|\frac{(2-\sigma)\langle A^h(x)y,y\rangle}{|y|^{n+\sigma+2}}|dy\\&+ \int_{|y|> 1/8(R_1-R_2')}|\frac{\rho(x+he,y)-\rho(x,y)}{|h|^{\beta}}||\delta u(x,y)\frac{(2-\sigma)\langle A^h(x)y,y\rangle}{|y|^{n+\sigma+2}}|dy\\&\le C\sup_y[\rho(\cdot,y)]_{C^{0,1}_x}(\|u\|_{C^{\sigma+\alpha_1}(B_{R_1})}+\|u\|_{\infty}).\end{align*} Here $C$ is independent of $h$ and $\beta$.
Also note that $$I^1_h(x)=A^h_{ij}(x)\int\delta u(x,y)\frac{\rho(x+he,y)-\rho(x,y)}{|h|^{\beta}} \frac{(2-\sigma)\langle y,e_i\rangle\langle y,e_j\rangle}{|y|^{n+\sigma+2}}dy=A^h_{ij}(x)\cdot J(x),$$ where we already has a $C^{\alpha_1}$-estimate on $A^h_{ij}$. As a result, it suffices to prove a H\"older estimate on $J$.

To see this note
\begin{align*}
|J(x)-J(x')|&\le |\int(\delta u(x,y)-\delta u(x',y))\frac{\rho(x+he,y)-\rho(x,y)}{|h|^{\beta}} \frac{(2-\sigma)\langle y,e_i\rangle\langle y,e_j\rangle}{|y|^{n+\sigma+2}}dy|\\&+|\int\delta u(x',y)(\frac{\rho(x+he,y)-\rho(x,y)}{|h|^{\beta}}-\frac{\rho(x'+he,y)-\rho(x',y)}{|h|^{\beta}}) \frac{(2-\sigma)\langle y,e_i\rangle\langle y,e_j\rangle}{|y|^{n+\sigma+2}}dy|.
\end{align*} The first term is bounded by $\sup_y[\rho(\cdot,y)]_{C^{0,1}_x}\|u\|_{C^{\sigma+\alpha_1}(B_{R_1})}|x-x'|^{\alpha_1}$, while the second is bounded by $\sup_y\|\rho(\cdot,y)\|_{C^2_x}(\|u\|_{C^{\sigma+\alpha_1}(B_{R_1})}+\|u\|_{\infty})|x-x'|,$independent of $h$ and $\beta$.
Thus one can take $\alpha_2<\alpha_1^2$.
\end{proof}

Finally we estimate the H\"older norm of $I_h^2$. Here the key idea is to exploit the smoothness of the kernel in $y$, since we are away from the singularity due to the cut-off.

\begin{lem}
$I^2_h\in C^{\alpha_1}(B_{R_2'})$ with estimate independent of $h$ and $\beta$.
\end{lem}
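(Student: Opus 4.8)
The plan is to exploit the fact that $w^2_h = (1-\eta)w_h$ is supported away from $B_{R_2''}$, so that for $x \in B_{R_2'}$ the integrand defining $I^2_h(x)$ is only nonzero for $|y| \gtrsim R_2'' - R_2'$. This removes the singularity of the kernel at $y=0$, and what remains is an integral of a smooth, rapidly decaying kernel against the translation-difference of $u$. More precisely, I would first write $\delta w^2_h(x,y) = w^2_h(x+y) + w^2_h(x-y) - 2 w^2_h(x)$, note $w^2_h(x) = 0$ for $x \in B_{R_2'}$, and unfold $w^2_h = (1-\eta)(u(\cdot + he) - u(\cdot))/|h|^\beta$. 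Thus $I^2_h(x)$ is a sum of terms of the shape
$$
\frac{1}{|h|^\beta}\int_{|y| \ge c} \big[(1-\eta)u\big](x \pm y + he) \,\frac{(2-\sigma)\langle A^h(x)y,y\rangle \rho(x+he,y)}{|y|^{n+\sigma+2}}\,dy - (\text{same with } he \text{ absent}),
$$
with $c = c(R_1,R_2',R_2'')>0$.

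The core estimate is then to bound the $C^{\alpha_1}$ norm of $x \mapsto I^2_h(x)$ uniformly in $h$ and $\beta$. For the $L^\infty$ bound, I would change variables $y \mapsto y \mp he$ in the translated terms to move the $h$-dependence off the argument of $u$ and onto the kernel, so that the difference becomes $\|u\|_{\mathcal{L}^\infty}$ times $|h|^{1-\beta} \le C$ times an integral of $|\nabla_y|$ of the smooth kernel $\langle A^h(x)y,y\rangle \rho(x+he,y)/|y|^{n+\sigma+2}$ over $\{|y| \ge c/2\}$; this integral converges and is bounded using $\|A^h\|_\infty \le \|F\|_{C^1}$ and the decay hypothesis on $\rho$ from Definition 2.10 (together with $\rho$ itself bounded). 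For the H\"older seminorm in $x$, the $x$-dependence enters through three places: the argument $x \pm y$ of $u$ (here one uses $\|u\|_{\mathcal{L}^\infty}$ and, after the change of variables, differentiates the kernel in $y$ or uses that translating $x$ is absorbed into translating $y$, giving a Lipschitz bound); the coefficient $A^h(x)$, for which Lemma 4.5 supplies a $C^{\alpha_1}$ bound independent of $h,\beta$; and the weight $\rho(x+he,y)$, for which $\rho$ smooth in $x$ gives a Lipschitz bound. Combining, $I^2_h$ is in fact Lipschitz in $x$ on $B_{R_2'}$, hence certainly $C^{\alpha_1}$, with constants depending only on $R_1, R_2', R_2''$, on $\|F\|_{C^2}$, on $\sup_y \|\rho(\cdot,y)\|_{C^1}$ and the decay constants $M_{|\alpha|}$, and on $\|u\|_{\mathcal{L}^\infty(\R)}$ and $\|u\|_{C^{\sigma+\alpha_1}(B_{R_1})}$ — all independent of $h$ and $\beta$ because the factors $|h|^{1-\beta}$ that appear are uniformly bounded since $\beta \le 1$.

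The step I expect to be the main obstacle is bookkeeping the change of variables cleanly so that \emph{all} of the $h$-dependence is moved onto the kernel and produces exactly one factor of $|h|$ (to be paired with $|h|^{-\beta}$), while simultaneously keeping the region of integration fixed at $\{|y| \ge c\}$ up to a harmless shrinking of $c$; one must check that $|h| < \tfrac14(R_2'' - R_2')$ guarantees the translated cutoff $(1-\eta)(\cdot + he)$ still vanishes on a fixed neighborhood of $B_{R_2'}$, so the lower cutoff $|y| \ge c$ survives the substitution. Once that localization is pinned down, the remaining estimates are routine applications of: boundedness of $A^h$ and $\rho$, the decay of $\nabla_y$ of the kernel (which is integrable on $\{|y| \ge c\}$ and even on all of $\R$ by the computation already done for condition (3.2)), Lemma 4.5 for the H\"older regularity of $A^h$, and smoothness of $\rho$ in $x$. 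I would conclude by collecting the three contributions and noting $I^2_h \in C^{0,1}(B_{R_2'}) \hookrightarrow C^{\alpha_1}(B_{R_2'})$ with the asserted uniform estimate.
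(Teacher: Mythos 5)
Your proposal follows the paper's proof in every essential step: expand $\delta w^2_h$ using the cutoff, perform the change of variables $y\mapsto y\mp he$ to shift the $h$-translation off $u$ and onto the (now nonsingular) kernel so a factor $|h|^{1-\beta}\le C$ appears via $\nabla_y$ of the kernel, and then control the $x$-variation through the $C^{\alpha_1}$ bound on $A^h$ from Lemma 4.5, the smoothness of $\rho$, and the substitution $y'=x\pm y$ that fixes the argument of $u$ while keeping the integration region bounded away from the singularity. One small slip: since $A^h$ is only $C^{\alpha_1}$ (by Lemma 4.5), the combined estimate is $C^{\alpha_1}$ rather than Lipschitz, as you indeed correctly use; the parenthetical "in fact Lipschitz" contradicts what you wrote a line earlier, but your conclusion is the right one and matches the paper's bound $|I^2_h(x)-I^2_h(x')|\le C|x-x'|^{\alpha_1}$.
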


\begin{proof} 
Note that
\begin{align*}\delta w^2_h(x,y) =& \frac{1}{|h|^\beta}[(1-\eta(x+y))u(x+y+he)-(1-\eta(x+y))u(x+y)\\ & +(1-\eta(x-y))u(x-y+he)-(1-\eta(x-y))u(x-y)\\ &+2(1-\eta(x))u(x)-2(1-\eta(x))u(x+he)]\\=& \frac{1}{|h|^\beta}[(1-\eta(x+y))u(x+y+he)-(1-\eta(x+y))u(x+y)\\ & +(1-\eta(x-y))u(x-y+he)-(1-\eta(x-y))u(x-y)].\end{align*} Therefore the expansion of $I_h$ consists of two similar terms, one of which is 
\begin{align}& \frac{2-\sigma}{|h|^\beta}\int_{|y|>R_2''-R_2'} [(1-\eta(x+y))u(x+y+he)-(1-\eta(x+y))u(x+y)]\frac{\langle A^h(x)y,y\rangle\rho(x+he,y)}{|y|^{n+\sigma+2}}dy =\\ &\frac{2-\sigma}{|h|^\beta}\int_{|y|>3/4(R_2''-R_2')} u(x+y)[(1-\eta(x+y-he))\frac{\langle A^h(x)(y-he),y-he\rangle\rho(x+he,y-he)}{|y-he|^{n+\sigma+2}}\\ &-(1-\eta(x+y))\frac{\langle A^h(x)y,y\rangle\rho(x+he,y)}{|y|^{n+\sigma+2}}]dy.\end{align}
 
  Since $\partial_y [(1-\eta(x+y))\frac{\langle A^h(x)y,y\rangle\rho(x+he,y)}{|y|^{n+\sigma+2}}]$ is bounded by $\frac{C}{|y|^{n+\sigma+1}}$, 
  $$\frac{1}{|h|^{\beta}} [(1-\eta(x+y-he))\frac{\langle A^h(x)(y-he),y-he\rangle}{|y-he|^{n+\sigma+2}}-(1-\eta(x+y))\frac{\langle A^h(x)y,y\rangle}{|y|^{n+\sigma+2}}]\le \frac{C|h|^{1-\beta}}{|y|^{n+\sigma+1}},$$
  thus $|I^2_h(x)|\le C \|u\|_{\mathcal{L}^{\infty}(\R)}$ independent of $h$ and $\beta$.  
  
  To estimate its H\"older seminorm, we note that  
  
  \begin{align*}& \frac{2-\sigma}{|h|^\beta}\int_{|y|>3/4(R_2''-R_2')} u(x+y)[(1-\eta(x+y-he))\frac{\langle A^h(x)(y-he),y-he\rangle\rho(x+he,y-he)}{|y-he|^{n+\sigma+2}}\\ &-(1-\eta(x+y))\frac{\langle A^h(x)y,y\rangle\rho(x+he,y)}{|y|^{n+\sigma+2}}]dy\\&= \frac{2-\sigma}{|h|^\beta}\int_{y\in B^c_{3/4(R_2''-R_2')}(x)} u(y)[(1-\eta(y-he))\frac{\langle A^h(x)(y-x-he),y-x-he\rangle\rho(x+he,y-x-he)}{|y-x-he|^{n+\sigma+2}}\\ &-(1-\eta(y))\frac{\langle A^h(x)(y-x),(y-x)\rangle\rho(x+he,y-x)}{|y-x|^{n+\sigma+2}}]dy.\end{align*}
  
  And if $x,x'\in B_{R_2'}$, 
  
  \begin{align*}&|[(1-\eta(y-he))\frac{\langle A^h(x)(y-x-he),y-x-he\rangle\rho(x+he,y-x-he)}{|y-x-he|^{n+\sigma+2}}\\&-(1-\eta(y))\frac{\langle A^h(x)(y-x),(y-x)\rangle\rho(x+he,y-x)}{|y-x|^{n+\sigma+2}}]\\-& [(1-\eta(y-he))\frac{\langle A^h(x')(y-x'-he),y-x'-he\rangle\rho(x'+he,y-x'-he)}{|y-x'-he|^{n+\sigma+2}}\\&-(1-\eta(y))\frac{\langle A^h(x')(y-x'),(y-x')\rangle\rho(x'+he,y-x')}{|y-x'|^{n+\sigma+2}}]|\\ \le& \frac{C_{\eta,\rho}}{|y|^{n+\sigma}}|h|(|A^h(x)-A^h(x')|)+\frac{C}{|y|^{n+\sigma}}|h|\|\rho\|_{C^1}|x-x'|+C_{\eta,\rho}|h|(|\frac{1}{|y-x|^{n+\sigma}}-\frac{1}{|y-x'|^{n+\sigma}}|)\\ \le& \frac{C_{\eta,\rho}}{|y|^{n+\sigma}}|h| \|A^h\|_{C^{\alpha_1}(B_{R_2'})}|x-x'|^{\alpha_1}+\frac{C}{|y|^{n+\sigma}}|h|\|\rho\|_{C^1}|x-x'|+\frac{C_{\eta,\rho}}{|y|^{n+\sigma+1}}|h||x-x'|. \end{align*}
  
Note that if $R_2'<\frac{7}{15}R_2''$, then $ B^c_{3/4(R_2''-R_2')}(x)\cup  B^c_{3/4(R_2''-R_2')}(x')\supset B^c_{1/8(R_2''-R_2')} $  
hence we can avoid the singularity of the integral at $x$ and $x'$ at the same time, and hence 
\begin{align*}|I^2_h(x)-I^2_{h}(x')|&\le (2-\sigma)|h|^{1-\beta}( \|A^h\|_{C^{\alpha_1}(B_{R_2'})}+\|\rho\|_{C^1}+1)|x-x'|^{\alpha_1}\\&\cdot\int_{|y|>1/8(R_2''-R_2')}\frac{C_{\eta,\rho}}{|y|^{n+\sigma}}+\frac{C_{\eta,\rho}}{|y|^{n+\sigma+1}}dy\\ &\le C|x-x'|^{\alpha_1},\end{align*} with $C$ independent of $h$ and $\beta$.
  \end{proof}
 
 Now all the conditions in Theorem 3.1 are satisfied, hence we have the following estimate 
 
\begin{equation*}\|w^1_h\|_{C^{\sigma+\alpha_2}(B_{R_2})}\le C_{\rho} (\|f\|_{C^{1,1}(B_1)}+\|D^2F\|_{\infty}+\|\partial_x F\|_{\infty}+\|u\|_{\mathcal{L}^{\infty}(\R)}+\|u\|_{C^{\sigma+\alpha_1}(B_{R_1})} +1).
\end{equation*}

This being true for all $h$ and $\beta=\min\{\sigma+\alpha_1,1\}$ we see we can iterate to get the following, by taking a smaller $R_2$ and $\alpha_3$ even smaller if necessary, 

\begin{align*}
\|u\|_{C^{1+\sigma+\alpha_3}(B_{R_2})}\le &C (\|f\|_{C^{1,1}(B_1)}+\|D^2F\|_{\infty}(1+\|D^\sigma u\|_{C^{\alpha_1}(B_{R_2'})})\\&+\|u\|_{\mathcal{L}^{\infty}(\R)}+\|D^\sigma u\|_{C^{\alpha_1}(B_{R_2'})}+\|u\|_{C^{\sigma+\alpha_1}(B_{R_1})} +1)
\end{align*} Using the estimate from the previous section, this reduces to 

\begin{prop} There is $\alpha_3\in (0,1)$ such that for $R_2$ small,   
\begin{equation*} 
\|u\|_{C^{1+\sigma+\alpha_3}(B_{R_2})}\le C( \|g\|_{\mathcal{L}^{\infty}(\R)}+\|f\|_{C^{1,1}(B_1)}+\|\partial_x F(\cdot,\cdot)\|_{\infty}+1)
\end{equation*} where the constant $C$ depends on universal constants, $\alpha_3$, $R_2$, $\|F\|_{C^2}$ and regularity of $\rho$.
\end{prop}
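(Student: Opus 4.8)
The plan is to assemble the pieces built above — the equation (4.1) for the localized difference quotient $w^1_h$, and the uniform-in-$(h,\beta)$ bounds for its kernel and right-hand side — apply Theorem 3.1 to it, and then close a loop against the $C^{\sigma+\alpha}$-estimate of the previous section. \emph{Step 1.} All hypotheses of Theorem 3.1 have now been checked for (4.1): the kernel $\frac{(2-\sigma)\langle A^h(x)y,y\rangle\rho(x+he,y)}{|y|^{n+\sigma+2}}$ lies in $\mathcal{L}_0(\sigma)$ by the ellipticity of $A^h$ (inherited from $F$) and of $\rho$; it obeys (3.1) with exponent $\alpha_1$ and a constant $A_0$ controlled by $\|F\|_{C^2}$, the regularity of $\rho$ and $\|D^\sigma u\|_{C^{\alpha_1}(B_{R_2'})}$ (Lemma 4.3 and the computation following it); and it obeys (3.2) by the decay of $\rho$ — all uniformly in $h$ and $\beta$. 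The right-hand side of (4.1) is bounded in $C^{\alpha_2}(B_{R_2'})$ uniformly in $h$ and $\beta$ by Lemmas 4.4--4.6 and the trivial bound $\|\,|h|^{-\beta}(f(\cdot+he)-f(\cdot))\|_{C^{\alpha_1}}\le\|f\|_{C^{1,1}(B_1)}$, and $\alpha_2<\alpha_1^2<\overline{\alpha}$. Theorem 3.1 then yields
$$\|w^1_h\|_{C^{\sigma+\alpha_2}(B_{R_2})}\le C_\rho\big(\|f\|_{C^{1,1}(B_1)}+\|D^2F\|_\infty+\|\partial_xF\|_\infty+\|u\|_{\mathcal{L}^\infty(\R)}+\|u\|_{C^{\sigma+\alpha_1}(B_{R_1})}+1\big)$$
with $C_\rho$ independent of $h$ and $\beta$; since $\eta\equiv1$ on $B_{R_2''}\supset B_{R_2}$, the same holds for $\|w_h\|_{C^{\sigma+\alpha_2}(B_{R_2})}$.

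\emph{Step 2.} Next I convert this uniform family of estimates on the $\beta$-order incremental quotients $w_h=|h|^{-\beta}(u(\cdot+he)-u(\cdot))$, $\beta=\min\{\sigma+\alpha_1,1\}$, into interior regularity for $u$. If $\sigma+\alpha_1\ge1$, then $\beta=1$, and letting $h\to0$ and using lower semicontinuity of the $C^{\sigma+\alpha_2}$-norm gives $u_e\in C^{\sigma+\alpha_2}(B_{R_2})$ for every direction $e$, i.e.\ $u\in C^{1+\sigma+\alpha_2}(B_{R_2})$. If $\sigma+\alpha_1<1$, then $\beta<1$ and $w_h$ does not converge to a derivative; instead, evaluating the uniform Hölder bound of $w_h$ at $x'=x+he$ gives the second-difference estimate $|u(x+2he)-2u(x+he)+u(x)|\le C|h|^{\beta+\sigma+\alpha_2}$ for $x$ in a slightly smaller ball, and the Zygmund characterization of Hölder spaces upgrades it to $u\in C^{\beta+\sigma+\alpha_2}$ there — a strict gain of at least $\sigma_0$ in the exponent. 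Because the coefficient and right-hand-side estimates of Lemmas 4.3--4.6 require only $u\in C^{\sigma+\alpha_1}$, they persist, so one re-runs the first bootstrap step with $u$ in the improved class, gaining at least $\sigma_0$ each time; after finitely many iterations the exponent exceeds $1$, and a final iteration in the regime $\beta=1$ produces, after shrinking $R_2$ and decreasing $\alpha_3$ if necessary,
$$\|u\|_{C^{1+\sigma+\alpha_3}(B_{R_2})}\le C\big(\|f\|_{C^{1,1}(B_1)}+\|D^2F\|_\infty(1+\|D^\sigma u\|_{C^{\alpha_1}(B_{R_2'})})+\|u\|_{\mathcal{L}^\infty(\R)}+\|D^\sigma u\|_{C^{\alpha_1}(B_{R_2'})}+\|u\|_{C^{\sigma+\alpha_1}(B_{R_1})}+1\big)$$
for some $\alpha_3\in(0,1)$.

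\emph{Step 3.} It remains to absorb the $u$-dependent quantities into the data. The rescaled $C^{\sigma+\alpha}$-estimate recorded at the start of this subsection bounds $\|u\|_{C^{\sigma+\alpha_1}(B_{R_1})}$ by $C(\|g\|_{\mathcal{L}^\infty(\R)}+\|f\|_{C^{\alpha_1}(B_1)}+\|\partial_xF(\cdot,\cdot)\|_\infty)$; the comparison principle bounds $\|u\|_{\mathcal{L}^\infty(\R)}$ by $\|g\|_{\mathcal{L}^\infty(\R)}$ and $\|f\|_{\mathcal{L}^\infty(B_1)}$; and Proposition 2.6, rescaled from $B_{1/4}\subset B_1$ to $B_{R_2'}\subset B_{R_1}$, bounds $\|D^\sigma u\|_{C^{\alpha_1}(B_{R_2'})}$ by $C(\|u\|_{\mathcal{L}^\infty(\R)}+\|u\|_{C^{\sigma+\alpha_1}(B_{R_1})})$. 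Substituting these into the estimate of Step 2 and using $\|D^2F\|_\infty\le\|F\|_{C^2}$ leaves on the right only $\|g\|_{\mathcal{L}^\infty(\R)}$, $\|f\|_{C^{1,1}(B_1)}$, $\|\partial_xF(\cdot,\cdot)\|_\infty$ and $1$, with $C$ depending only on universal constants, $\alpha_3$, $R_2$, $\|F\|_{C^2}$ and the regularity of $\rho$ — which is the asserted estimate.

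The main obstacle is Step 2 in the regime $\sigma+\alpha_1<1$: one must verify that iterating the first bootstrap step is legitimate, that the gain per iteration stays bounded below by the fixed amount $\sigma_0$ so that finitely many steps suffice, and that the borderline exponent $1$ — where the Hölder and Zygmund scales diverge and $w_h/|h|$ may fail to stay bounded — is avoided in the final passage from $\beta$-order incremental quotients to genuine first derivatives, which one arranges by a harmless perturbation of $\alpha_2$. This is exactly why the argument uses $\beta$-order difference quotients throughout instead of jumping directly to first derivatives as one could if $\sigma>1$, and why the estimates in Lemmas 4.3--4.6 were kept uniform in $h$ and $\beta$.
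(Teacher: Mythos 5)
Your argument is correct and follows the paper's own route: apply Theorem 3.1 to equation (4.1) for $w^1_h$ using the uniform-in-$(h,\beta)$ bounds from Lemmas 4.3--4.6, then pass from the incremental quotients to a derivative estimate, then close via Proposition 3.3, Proposition 2.6 and the $\mathcal{L}^\infty$ bound. Your Step 2 spells out, via the Zygmund characterization and a finite iteration gaining at least $\sigma_0+\alpha_2-\alpha_1>0$ per pass until the exponent exceeds $1$, exactly the step the paper compresses into ``this being true for all $h$ and $\beta$ we can iterate,'' and your observation that $\sigma+\alpha_2<\sigma+\alpha_1<1$ in the relevant regime keeps the second-difference argument self-consistent.
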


Note that then by a standard covering argument we have 
\begin{cor}There is $\alpha_3\in (0,1)$ such that for any $R_2<1$, 
\begin{equation} 
\|u\|_{C^{1+\sigma+\alpha_3}(B_{R_2})}\le C( \|g\|_{\mathcal{L}^{\infty}(\R)}+\|f\|_{C^{1,1}(B_1)}+\|\partial_x F(\cdot,\cdot)\|_{\infty}+1)
\end{equation} where the constant $C$ depends on universal constants, $\alpha_3$, $R_2$, $\|F\|_{C^2}$ and regularity of $\rho$.
\end{cor}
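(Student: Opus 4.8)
The plan is to promote Proposition 4.7, which is stated for a small ball centered at the origin, to the estimate on $B_{R_2}$ for every $R_2<1$ by localizing and covering, exactly as the sentence preceding the statement suggests. The first point is that nothing in the proof of Proposition 4.7 (nor in the $C^{\sigma+\alpha_1}$ estimate on which it rests) used that the ball is centered at $0$: the cut-offs, the difference-quotient equation~(4.1), all the kernel estimates, and the application of Theorem 3.1 are translation covariant. To make this uniform near $\partial B_1$, where the intermediate radii $R_2<R_2'<R_2''<R_1$ must shrink, I would fix $x_0\in B_1$, set $d:=\dist(x_0,\partial B_1)$, and rescale: put $v(x):=u(x_0+dx)$ for $x\in\R$ and $\rho'(x,y):=\rho(x_0+dx,dy)$. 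A change of variables $z=dy$ in the definition of $D^\sigma_\rho$ gives $D^\sigma_{\rho'}v(x)=d^\sigma D^\sigma_\rho u(x_0+dx)$, and $\rho'$ is again a nice weight with the same constants $\lambda_\rho,\Lambda_\rho,M_{|\alpha|}$ — the decay bounds of Definition 2.10 are scale invariant in exactly the way required. Hence $v$ solves $\tilde F(D^\sigma_{\rho'}v,x)=\tilde f(x)$ in $B_1$, where $\tilde F(M,x):=F(d^{-\sigma}M,x_0+dx)$ and $\tilde f(x):=f(x_0+dx)$; the operator $\tilde F$ is again smooth, concave in the matrix variable, uniformly elliptic, and vanishes at $M=0$, while $v|_{B_1^c}$ is a bounded function with $\|v\|_{\mathcal L^\infty(\R\setminus B_1)}\le\|u\|_{\mathcal L^\infty(\R)}$. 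All structural quantities of the rescaled data — $\|\tilde F\|_{C^2}$, $\|\partial_x\tilde F\|_\infty$, $\|\tilde f\|_{C^{1,1}(B_1)}$, the ellipticity constants, the regularity of $\rho'$ — differ from the original ones only by powers of $d$, hence stay bounded once $d$ is bounded away from $0$.

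Applying Proposition 4.7 to $v$ then yields a radius $r_*\in(0,1)$, bounded below in terms of $d$ and the structural norms, with
\begin{equation*}
\|v\|_{C^{1+\sigma+\alpha_3}(B_{r_*})}\le C\bigl(\|g\|_{\mathcal L^\infty(\R)}+\|f\|_{C^{1,1}(B_1)}+\|\partial_xF(\cdot,\cdot)\|_\infty+1\bigr),
\end{equation*}
with $C$ depending on universal constants, $\alpha_3$, $d$, $\|F\|_{C^2}$ and the regularity of $\rho$. Undoing the scaling costs only chain-rule factors $d^{-k}\le(1-R_2)^{-k}$ on the $k$-th derivatives, so I obtain the localized estimate $\|u\|_{C^{1+\sigma+\alpha_3}(B_{r_*d}(x_0))}\le C(\cdots)$ with the same right-hand side, valid for every $x_0\in B_1$, the constant now depending also on $\dist(x_0,\partial B_1)$.

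Finally, fix $R_2<1$, put $d:=1-R_2$, and note $\dist(x_0,\partial B_1)\ge d$ for all $x_0\in\overline{B_{R_2}}$, so the localized estimate holds on $B_{r_*d}(x_0)$ with a constant uniform over such $x_0$. Cover the compact set $\overline{B_{R_2}}$ by finitely many balls $B_{r_*d}(x_1),\dots,B_{r_*d}(x_N)$ with $x_j\in\overline{B_{R_2}}$, and let $\theta>0$ be a Lebesgue number of this cover. The $C^k$-norm of $u$ on $B_{R_2}$, with $k$ the integer part of $1+\sigma+\alpha_3$, is bounded by the maximum of the corresponding norms over the $B_{r_*d}(x_j)$; for the remaining Hölder seminorm of $D^ku$ one treats a pair $x,y\in B_{R_2}$ according to whether $|x-y|\le\theta$ — then $x,y$ lie in a common ball of the cover — or $|x-y|>\theta$, in which case $|D^ku(x)-D^ku(y)|\le 2\theta^{-\gamma}\|D^ku\|_{\mathcal L^\infty(B_{R_2})}|x-y|^{\gamma}$. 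This gives the claimed bound, with the dependence on $d$ absorbed into the dependence on $R_2$; see \cite{GT} for this patching of Hölder norms over a finite cover. The only step that is not purely mechanical is the scaling covariance of the previous paragraph — that the class of Dirichlet problems~(1.1) with nice weights is stable under $u\mapsto u(x_0+d\,\cdot\,)$ and that all relevant constants transform by controlled powers of $d$ — so that is where I would check the hypotheses of Theorem 1.1 and Definition 2.10 carefully for $\tilde F$ and $\rho'$; everything else is routine.
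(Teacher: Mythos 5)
Your proposal is correct and fills in exactly the ``standard covering argument'' that the paper invokes in one sentence before the corollary: translate and dilate to map a neighborhood of an arbitrary $x_0\in\overline{B_{R_2}}$ into the unit ball, check (as you do) that the rescaled operator $\tilde F$ and weight $\rho'$ remain admissible with constants controlled by negative powers of $d=\dist(x_0,\partial B_1)\ge 1-R_2$, apply Proposition 4.7, scale back, and patch the finitely many local H\"older estimates together using a Lebesgue number. The one point worth stating explicitly when writing this up is that the ellipticity constants of $\tilde F(M,x)=F(d^{-\sigma}M,x_0+dx)$ (and hence what the paper calls ``universal constants'') pick up a factor $d^{-\sigma}$, which is harmless precisely because $d\ge 1-R_2$ and the final constant is allowed to depend on $R_2$; you gesture at this but it is the place where ``powers of $d$'' actually enters the universal constants rather than just $\|\tilde F\|_{C^2}$ or $\|\tilde f\|_{C^{1,1}}$.
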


\subsection{Further regularity}Take now $0<R_3<R_3'<R_3''<R_2$, then in $B_{R_3'}$ the estimates in the previous subsection gives enough regularity to take $\beta=1$ and $h\to 0$ in the left-hand side of equation (4.1), which converges to 
$$\int \delta u^1_e(x,y)\frac{(2-\sigma)\langle A(x)y,y\rangle\rho(x,y)}{|y|^{n+\sigma+2}}dy.$$ Here $u^1_e$ is a cut-off in $B_{R_2}$ of the directional derivative of $u$ in the direction $e$. And 
$$A_{ij}(x)=\frac{\partial F}{\partial M_{ij}}(D^\sigma u (x), x).$$
The first term on the right-hand side converges nicely to $f_e(x)$. The second term converges to $k(x)=\partial_x F(D^\sigma u(x),x)\cdot e.$ The third converges to $$I^1(x)=\int\delta u(x,y)\frac{(2-\sigma)\langle A(x)y,y\rangle \partial_{x,e}\rho(x,y)}{|y|^{n+\sigma+2}}dy.$$
Using (4.2) we see that the third term on the right-hand side converges to 
\begin{align*}I^2(x) &=I^{2,1}(x)+I^{2,2}(x)\\ &=(2-\sigma)\int_{|y|>3/4(R_2''-R_2')} u(x+y)\partial_{y,e}[(1-\eta(x+y))\frac{\langle A(x)y,y\rangle\rho(x,y)}{|y|^{n+\sigma+2}}]dy\\&+(2-\sigma)\int_{|y|>3/4(R_2''-R_2')}u(x-y)\partial_{y,e}[(1-\eta(x-y))\frac{\langle A(x)y,y\rangle\rho(x,y)}{|y|^{n+\sigma+2}}]dy.\end{align*} As a result, 

\begin{lem}$u_e^1$ is a bounded function that solves, in $B_{R_3'}$,   the following equation \begin{equation}
\int \delta u^1_e(x,y)\frac{(2-\sigma)\langle A(x)y,y\rangle}{|y|^{n+\sigma+2}}dy=f_e(x)+k(x)-I^1(x)-I^{2,1}(x)-I^{2,2}(x).
\end{equation}\end{lem}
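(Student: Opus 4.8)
The plan is to pass to the limit $h\to 0$, now with $\beta=1$, in equation (4.1), which holds on $B_{R_3'}\subset B_{R_2'}$. Taking $\beta=1$ is admissible because Corollary 4.9 gives $u\in C^{1+\sigma+\alpha_3}$ on every ball $B_R$ with $R<1$; in particular $w_h\to u_e$ locally uniformly and with an $h$-uniform bound, so $w^1_h=\eta w_h\to \eta u_e=:u^1_e$, a compactly supported continuous, hence bounded, function. I would first record the remaining limits: $A^h(x)\to A(x)$ and $\rho(x+he,\cdot)\to\rho(x,\cdot)$, using continuity of $D^\sigma u$ (Proposition 2.6) and smoothness of $F$ and $\rho$; $\frac{f(x+he)-f(x)}{|h|}\to f_e(x)$; $k_h(x)\to k(x)=\partial_x F(D^\sigma u(x),x)\cdot e$; and $\frac{\rho(x+he,y)-\rho(x,y)}{|h|}\to \partial_{x,e}\rho(x,y)$.

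For the left-hand side of (4.1) I would invoke dominated convergence: the integrand converges pointwise in $y$ to $\delta u^1_e(x,y)\frac{(2-\sigma)\langle A(x)y,y\rangle\rho(x,y)}{|y|^{n+\sigma+2}}$, the left-hand side of (4.5), and it is dominated, uniformly in $h$, by an $L^1_y$ function built from $|\delta w^1_h(x,y)|\le C\min\{|y|^{\sigma'},1\}$ for some $\sigma'>\sigma$ (from the $h$-uniform $C^{\sigma+\alpha_2}$ bound preceding Corollary 4.9 near the origin, and from $\|w^1_h\|_{\mathcal{L}^\infty}\le C$ of Lemma 4.3 for large $|y|$) times $|\frac{(2-\sigma)\langle A^h(x)y,y\rangle\rho(x+he,y)}{|y|^{n+\sigma+2}}|\le C|y|^{-n-\sigma}$. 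The $f$-difference quotient and $k_h$ converge uniformly in $x$ by $C^1$-regularity of $f$ and continuity of $\partial_x F$ and $D^\sigma u$. The term $I^1_h\to I^1$ is handled the same way, additionally dominating $|\frac{\rho(x+he,y)-\rho(x,y)}{|h|}|\le[\rho(\cdot,y)]_{C^{0,1}_x}\le C$ and using $|\delta u(x,y)|\le C\min\{|y|^{\sigma+\alpha_1},1\}$.

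The delicate term is $I^2_h$. Here I would use the representation from the proof of Lemma 4.8: after the change of variables transferring the $h$-translation onto the kernel, the integrand over $\{|y|>3/4(R_2''-R_2')\}$ becomes $u(x+y)$ times $\frac{1}{|h|}\big[\Phi_h(y-he)-\Phi_h(y)\big]$, where $\Phi_h(y)=(1-\eta(x+y))\frac{(2-\sigma)\langle A^h(x)y,y\rangle\rho(x+he,y)}{|y|^{n+\sigma+2}}$ is $C^1$ in $y$ on this set (the cutoff removes the singularity) with $|\partial_y\Phi_h|\le C|y|^{-n-\sigma-1}$ uniformly in $h$. Hence $\frac{1}{|h|}[\Phi_h(y-he)-\Phi_h(y)]\to -\partial_{y,e}\Phi(y)$ pointwise, with an $h$-uniform $L^1_y$ dominating function, and — also sending $A^h\to A$ and $\rho(x+he,\cdot)\to\rho(x,\cdot)$ inside — dominated convergence yields $I^2_h\to I^{2,1}+I^{2,2}$ with the stated integral formulas (and symmetrically for the $u(x-y)$ term). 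Collecting the limits of all terms of (4.1) gives equation (4.5) on $B_{R_3'}$, and boundedness of $u^1_e$ was noted above.

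The main obstacle is precisely the $I^2_h$ term: one must verify that moving the discrete $h$-difference off $u$ and onto the smooth-in-$y$ part of the kernel (legitimate because the cutoff keeps us away from $y=0$) produces, in the limit, the genuine $y$-derivative appearing in $I^{2,1}$ and $I^{2,2}$, with control uniform in $h$ so that dominated convergence applies. This is exactly the mechanism already used in the proof of Lemma 4.8, so it transfers with only minor bookkeeping; all the remaining passages to the limit are routine dominated convergence resting on the $h$-uniform estimates of Lemmas 4.3--4.8.
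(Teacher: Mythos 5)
Your proposal is correct and follows essentially the same approach as the paper: pass to the limit $h\to 0$ with $\beta=1$ in equation (4.1), using the $C^{1+\sigma+\alpha_3}$ estimate of Corollary 4.9 to justify this, and treat $I^2_h$ via the change of variables that moves the difference quotient onto the kernel as in the proof of Lemma 4.8. The paper's own proof is the terse discussion preceding the lemma, simply asserting the term-by-term limits; your dominated-convergence justifications with $h$-uniform bounds spell out why those limits hold, but the underlying argument is the same.
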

We point out that both the coefficient and the right-hand side enjoy nice regularity in $B_{R_3'}$. For instance 
\begin{lem}
$$\|A\|_{C^{1+\alpha_3}(B_{R_3'})}\le C(\|u\|_{C^{1+\alpha_3+\sigma}(B_{R_2})}+\|u\|_{\infty}),$$
$$\|f_e\|_{C^{1+\alpha_3}(B_{R_3'})}\le \|f\|_{C^{2,1}(B_1)},$$
$$\|k\|_{C^{1+\alpha_3}(B_{R_3'})}\le C\|F\|_{C^2}(\|u\|_{C^{1+\alpha_3+\sigma}(B_{R_2})}+1).$$\end{lem}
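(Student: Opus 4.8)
The plan is to extract the regularity of $f_e$, $A$ and $k$ from that of the single map $\Phi(x)=(D^\sigma u(x),x)$, the one new ingredient being that, thanks to the $C^{1+\sigma+\alpha_3}$ estimate from the first bootstrap step, $D^\sigma u$ is one full derivative more regular than what Proposition 2.6 provides.

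So the first step is the one-order upgrade of Proposition 2.6: on a ball $B_{R_3''}$ with $R_3'<R_3''<R_2$,
$$\|D^\sigma u\|_{C^{1+\alpha_3}(B_{R_3''})}\le C\left(\|u\|_{C^{1+\sigma+\alpha_3}(B_{R_2})}+\|u\|_{\mathcal{L}^{\infty}(\R)}\right),$$
with $C$ depending on $R_2$, $R_3''$, $\alpha_3$ and the regularity of $\rho$. To prove it I would differentiate the defining integral of $D^\sigma_{\rho,ij}u$ in $x_k$: the $y$-decay of $\rho$ granted by Definition 2.10 makes differentiation under the integral sign legitimate and produces the commutation identity $\partial_{x_k}D^\sigma_{\rho,ij}u=D^\sigma_{\rho,ij}(\partial_{x_k}u)+D^\sigma_{\partial_{x_k}\rho,ij}u$. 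The two operators on the right have bounded smooth weights with the $y$-decay used in the proof of Proposition 2.6 (which uses only boundedness, smoothness in $x$ and the $y$-decay of the weight, never a lower bound), so that argument, rescaled, applies to $\partial_{x_k}u\in C^{\sigma+\alpha_3}(B_{R_2})$ and to $u\in C^{\sigma+\alpha_3}(B_{R_2})\cap\mathcal{L}^{\infty}(\R)$, both controlled by the first bootstrap step, and yields $\partial_{x_k}D^\sigma_{\rho,ij}u\in C^{\alpha_3}(B_{R_3''})$ with the claimed bound; hence $D^\sigma u\in C^{1+\alpha_3}(B_{R_3''})$.

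Granting this, the three estimates are essentially bookkeeping. The estimate on $f_e$ is immediate, since $f_e=\partial_e f$ gives $\|f_e\|_{C^{1+\alpha_3}(B_{R_3'})}\le\|f\|_{C^{2+\alpha_3}(B_1)}\le\|f\|_{C^{2,1}(B_1)}$. For $A$ and $k$, by the first step $\Phi\in C^{1+\alpha_3}(B_{R_3'})$ (note $R_3'<R_3''$) and $\Phi$ maps $B_{R_3'}$ into a fixed compact subset of $S^n\times\overline{B_1}$ on which $F$ and all its derivatives are bounded, since $F$ is smooth. Writing $A_{ij}=(\partial F/\partial M_{ij})\circ\Phi$ and $k=(\partial_x F\circ\Phi)\cdot e$, the chain rule together with the standard composition and product estimates for $C^{1+\alpha_3}$ functions gives $\|A\|_{C^{1+\alpha_3}(B_{R_3'})}\le C(1+\|D^\sigma u\|_{C^{1+\alpha_3}(B_{R_3'})})$ and $\|k\|_{C^{1+\alpha_3}(B_{R_3'})}\le C\|F\|_{C^2}(1+\|D^\sigma u\|_{C^{1+\alpha_3}(B_{R_3'})})$; substituting the bound from the first step, and absorbing the controlled quantities into the constant, produces exactly the three displayed inequalities.

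The only genuinely substantive step is the one-order upgrade of Proposition 2.6, i.e.\ justifying differentiation under the integral and the commutation $\partial_{x_k}D^\sigma_{\rho}=D^\sigma_{\rho}\,\partial_{x_k}+D^\sigma_{\partial_{x_k}\rho}$; this is precisely where the $y$-decay of $\rho$ from Definition 2.10 is needed, and one must also shrink from $B_{R_2}$ to $B_{R_3''}$ so that the far-away (cut-off) part of the argument in the proof of Proposition 2.6 applies. Everything after that is standard manipulation of Hölder norms of compositions, in the spirit of the estimate for $k_h$ established earlier in this section.
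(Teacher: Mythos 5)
Your proposal is correct and fills in precisely the details the paper's one-sentence proof (``Use the smoothness of $F$, $f$ and the estimate from Section 4.1'') leaves implicit: the $C^{1+\alpha_3}$ upgrade of Proposition~2.6, and the composition estimates for $D_M F$ and $\partial_x F$ with $x\mapsto(D^\sigma u(x),x)$. The only point to sharpen is that the commutation $\partial_{x_k}D^\sigma_{\rho}=D^\sigma_{\rho}\,\partial_{x_k}+D^\sigma_{\partial_{x_k}\rho}$ must be applied to the compactly supported piece $u\eta$ rather than to $u$ itself (since $\partial_{x_k}u$ is not even defined outside $B_1$, where $u=g\in\mathcal{L}^\infty$), with the tail $u(1-\eta)$ handled by the smoothness of the kernel in $x$ away from the singularity --- which is exactly the role of the cut-off decomposition from Proposition~2.6's proof that you invoke in your closing remark.
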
 
\begin{proof}
Use the smoothness of $F$, $f$ and the estimate from Section 4.1.\end{proof}

The only term that requires extra care is $I^2$, since the terms $u(x+y)$ and $u(x-y)$ see the global behaviour of the solution and hence does not have regularity higher than H\"older. But again we exploit the fact that our kernel is very smooth in $y$ to give the following

\begin{lem} $I^2\in C^{1+\alpha_4}$ for some $\alpha_4>0$ with estimate.\end{lem}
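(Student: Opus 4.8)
The plan is to prove the estimate for each of the two pieces $I^{2,1}$ and $I^{2,2}$ separately; they are entirely symmetric (one comes from $u(x+y)$, the other from $u(x-y)$, with the reflection $y\mapsto -y$), so I will only treat $I^{2,1}$. As in the proof of Lemma 4.9 (the $h$-dependent version of this statement), the idea is to perform the change of variables $y\mapsto y-x$ so that the singularity location of the kernel and the cut-off is frozen, and then exploit that away from this singularity the kernel
$$\Phi(x,y)=(1-\eta(y))\,\frac{(2-\sigma)\langle A(x)(y-x),(y-x)\rangle \rho(x,y-x)}{|y-x|^{n+\sigma+2}}$$
is smooth in $y$ and decays like $|y|^{-n-\sigma}$ together with all its $y$-derivatives. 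Concretely,
$$I^{2,1}(x)=\int_{|y-x|>\frac34(R_2''-R_2')} u(y)\,\partial_{y,e}\Phi(x,y)\,dy,$$
and since $\eta\equiv 1$ on $B_{R_2''}$, when $x\in B_{R_3'}$ the integrand is supported in $\{|y|>R_2''\}$, so we are uniformly away from $y=x$.

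The first step is to differentiate under the integral sign in $x$. Because $u(y)$ carries no $x$-dependence, all derivatives fall on $\partial_{y,e}\Phi(x,y)$, and we get
$$D_x^\gamma I^{2,1}(x)=\int u(y)\,D_x^\gamma\partial_{y,e}\Phi(x,y)\,dy$$
for $|\gamma|\le 1$, with the tail decay $|D_x^\gamma \partial_{y,e}\Phi(x,y)|\le C|y|^{-n-\sigma}$ making the integral absolutely convergent and differentiation under the integral legitimate. This already shows $I^{2,1}\in C^1(B_{R_3'})$ with $\|I^{2,1}\|_{C^1}\le C\|u\|_{\mathcal{L}^\infty(\R)}$, where $C$ depends on $\|A\|_{C^1(B_{R_3'})}$ (hence on $\|u\|_{C^{1+\alpha_3+\sigma}(B_{R_2})}$ via Lemma 4.8), $\|\rho\|_{C^1}$ and the cut-off. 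The second step is the Hölder estimate on $D_x I^{2,1}$: for $x,x'\in B_{R_3'}$ one writes $D_x I^{2,1}(x)-D_x I^{2,1}(x')=\int u(y)\big(D_x\partial_{y,e}\Phi(x,y)-D_x\partial_{y,e}\Phi(x',y)\big)\,dy$ and estimates the difference of kernels. The terms where the difference lands on $A$ contribute $[A]_{C^{1+\alpha_3}}|x-x'|^{\alpha_3}$ (using Lemma 4.8), the terms where it lands on $\rho(x,\cdot)$ contribute $\|\rho\|_{C^2}|x-x'|$, and the terms where it lands on the $|y-x|^{-n-\sigma-2}$ factor and on $\langle A(y-x),y-x\rangle$ are Lipschitz in $x$ (again because we are away from the singularity, so $|y-x|\sim |y|$ uniformly). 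Collecting, $|D_xI^{2,1}(x)-D_xI^{2,1}(x')|\le C\|u\|_{\mathcal{L}^\infty(\R)}|x-x'|^{\alpha_4}$ for $\alpha_4=\alpha_3$, with $C$ depending on the quantities listed above.

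I do not expect a genuine obstacle here — the whole point of the cut-off $\eta$ is precisely to remove the singularity so that $I^2$ inherits only the mild (Hölder, indeed $L^\infty$) regularity of $u$ in $y$ but full smoothness in $x$ from the kernel. The one place requiring a little care is the bookkeeping of the change of variables and making sure that for \emph{all} $x,x'\in B_{R_3'}$ simultaneously the domain of integration stays inside $\{|y|>c(R_2''-R_2')\}$ with a uniform $c>0$; this is exactly the role of the geometric condition $R_3'<R_2$ together with the choice $R_2'<\tfrac{7}{15}R_2''$ from Lemma 4.9, which guarantees the relevant exterior balls centered at $x$ and $x'$ both contain a fixed exterior ball. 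Once that is in place, all the integrals $\int_{|y|>c}|y|^{-n-\sigma}\,dy$ and $\int_{|y|>c}|y|^{-n-\sigma-1}\,dy$ are finite and the estimate follows. The gain in the Hölder exponent relative to $I^2_h$ (where one only had $C^{\alpha_1}$) comes from having already bootstrapped $A$ to $C^{1+\alpha_3}$ in Lemma 4.8, which is why we may now differentiate the kernel once more in $x$.
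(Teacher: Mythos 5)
Your proposal is correct and takes essentially the same approach as the paper: change variables to freeze the singularity location of the kernel, exploit the cut-off $1-\eta$ to stay uniformly away from it, and use the smoothness of the kernel in $y$ (together with $A\in C^{1+\alpha_3}$) to differentiate once in $x$ and obtain a H\"older estimate on $D_x I^{2,1}$. The paper reaches the same limiting expression by sending $h\to 0$ in the difference quotient rather than differentiating under the integral directly, but the underlying computation and the estimates on each piece are identical.
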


\begin{proof}
Fix a direction $\tilde{e}$, and let $|h|<\frac{1}{4}(R_2''-R_2')$
\begin{align*}
&\frac{I^{2,1}(x+h\tilde{e})-I^{2,1}(x)}{|h|}\\&=\frac{2-\sigma}{|h|}\int_{|y|>3/4(R_2''-R_2')} u(x+h\tilde{e}+y)\partial_{y,e}[(1-\eta(x+h\tilde{e}+y))\frac{\langle A(x+h\tilde{e})y,y\rangle\rho(x+h\tilde{e},y)}{|y|^{n+\sigma+2}}]\\&-u(x+y)\partial_{y,e}[(1-\eta(x+y))\frac{\langle A(x)y,y\rangle\rho(x,y)}{|y|^{n+\sigma+2}}]dy\\&=\frac{2-\sigma}{|h|}\int_{|y|>1/2(R_2''-R_2')} u(x+y)\partial_{y,e}[(1-\eta(x+y))\frac{\langle A(x+h\tilde{e})(y-h\tilde{e}),y-h\tilde{e}\rangle\rho(x+h\tilde{e},y-h\tilde{e})}{|y-h\tilde{e}|^{n+\sigma+2}}]\\&-u(x+y)\partial_{y,e}[(1-\eta(x+y))\frac{\langle A(x)y,y\rangle\rho(x,y)}{|y|^{n+\sigma+2}}]dy\\&=\frac{2-\sigma}{|h|}\int_{|y|>1/2(R_2''-R_2')} u(x+y)\partial_{y,e}[(1-\eta(x+y))\frac{\langle A(x+h\tilde{e})(y-h\tilde{e}),y-h\tilde{e}\rangle\rho(x+h\tilde{e},y-h\tilde{e})}{|y-h\tilde{e}|^{n+\sigma+2}}\\&-(1-\eta(x+y))\frac{\langle A(x)y,y\rangle\rho(x,y)}{|y|^{n+\sigma+2}}]dy.
\end{align*}

Now the product rule gives 
\begin{align*}
\partial_{y,e}&[(1-\eta(x+y))(\frac{\langle A(x+h\tilde{e})(y-h\tilde{e}),y-h\tilde{e}\rangle\rho(x+h\tilde{e},y-h\tilde{e})}{|y-h\tilde{e}|^{n+\sigma+2}}-\frac{\langle A(x)y,y\rangle\rho(x,y)}{|y|^{n+\sigma+2}})]\\&=\partial_{y,e}(1-\eta(x+y))(\frac{\langle A(x+h\tilde{e})(y-h\tilde{e}),y-h\tilde{e}\rangle\rho(x+h\tilde{e},y-h\tilde{e})}{|y-h\tilde{e}|^{n+\sigma+2}}-\frac{\langle A(x)y,y\rangle\rho(x,y)}{|y|^{n+\sigma+2}})\\&+(1-\eta(x+y))\partial_{y,e}(\frac{\langle A(x+h\tilde{e})(y-h\tilde{e}),y-h\tilde{e}\rangle\rho(x+h\tilde{e},y-h\tilde{e})}{|y-h\tilde{e}|^{n+\sigma+2}}-\frac{\langle A(x)y,y\rangle\rho(x,y)}{|y|^{n+\sigma+2}})\\&=\partial_{y,e}(1-\eta(x+y))(\frac{\langle A(x+h\tilde{e})(y-h\tilde{e}),y-h\tilde{e}\rangle\rho(x+h\tilde{e},y-h\tilde{e})}{|y-h\tilde{e}|^{n+\sigma+2}}-\frac{\langle A(x)y,y\rangle\rho(x,y)}{|y|^{n+\sigma+2}})\\&+(1-\eta(x+y))\partial_{y,e}(\frac{\langle A(x+h\tilde{e})(y-h\tilde{e}),y-h\tilde{e}\rangle\rho(x+h\tilde{e},y-h\tilde{e})}{|y-h\tilde{e}|^{n+\sigma+2}})\\&-(1-\eta(x+y))\partial_{y,e}(\frac{\langle A(x)y,y\rangle\rho(x,y)}{|y|^{n+\sigma+2}}).\end{align*}

Dividing this by $|h|$ and letting $|h|\to 0$, it follows by another application of the product rule that
\begin{align*}
\frac{1}{|h|}\partial_{y,e}&[(1-\eta(x+y))(\frac{\langle A(x+h\tilde{e})(y-h\tilde{e}),y-h\tilde{e}\rangle\rho(x+h\tilde{e},y-h\tilde{e})}{|y-h\tilde{e}|^{n+\sigma+2}}-\frac{\langle A(x)y,y\rangle\rho(x,y)}{|y|^{n+\sigma+2}})]\\&\to\partial_{y,e}(1-\eta(x+y))\frac{\langle\partial_{x,\tilde{e}}(A(x)\rho(x,y))y,y\rangle}{|y|^{n+\sigma+2}}+\partial_{y,e}(1-\eta(x+y))\partial_{y,\tilde{e}}(\frac{\langle A(x)y,y\rangle\rho(x,y)}{|y|^{n+\sigma+2}})\\&+(1-\eta(x+y))\partial_{y,e}(\frac{\langle\partial_{x,\tilde{e}}(A(x)\rho(x,y))y,y\rangle}{|y|^{n+\sigma+2}})+(1-\eta(x+y))\partial_{y,\tilde{e}}\partial_{y,e}(\frac{\langle A(x)y,y\rangle\rho(x,y)}{|y|^{n+\sigma+2}}).
\end{align*} 

Like in the proof for Lemma 4.3, we can use the $C^{1+\alpha_3}$ estimate on $A$, the smoothness of $\rho$, and the smoothness of the kernel in $y$, to get  
$\|I^{2,1}\|_{C^{1+\alpha_4}(B_{R_3'})}\le C$. 

Similar argument applies to $I^{2,2}$.\end{proof}

All this regularity allows us to again apply the strategy in Section 4.1 to our new equation (4.6), and to show that $u\in C^{2+\sigma+\alpha_4}(B_{R_3})$. Iterating this localization-bootstrap argument, we see $u$ is smooth in the interior of $B_1$, and we have proved the theorem
\begin{thm}
$u$ is smooth in the interior of $B_1$.
\end{thm}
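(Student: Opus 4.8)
The plan is to run the localization-bootstrap scheme of Section 4.1 inductively. Suppose we have already established, by the previous steps, that $u\in C^{m+\sigma+\alpha}(B_R)$ for some integer $m\ge 1$, some small $\alpha>0$, and some radius $R<1$; the base case $m=1$ is exactly Corollary 4.10. The goal of one step is to upgrade this to $u\in C^{m+1+\sigma+\alpha'}(B_{R'})$ for a smaller $\alpha'>0$ and a slightly smaller radius $R'<R$, with estimates depending only on universal constants, the radii, $\|F\|_{C^{m+2}}$, $\|f\|_{C^{m+1,1}(B_1)}$, and the regularity of $\rho$. Since at each step we lose only a definite fraction of the radius and we may start from any $R<1$, after the step we still cover, say, $B_{R-\delta}$ for an arbitrarily prescribed loss $\delta$; iterating and taking a countable exhaustion shows $u\in C^\infty(B_1)$ in the interior.

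The mechanism of the inductive step mirrors Section 4.2 verbatim, only with one extra derivative of everything. First I would fix nested radii $R<R'_{*}<R''_{*}<R$ as in the previous subsection and, using that $u\in C^{m+\sigma+\alpha}(B_R)$ with $m\ge 1$, take a genuine $(m+1)$-st directional derivative $\partial_e^{m+1}u$ (no difference quotients are needed now since the regularity is already past one derivative) and localize it with a cutoff $\eta$ supported in $B_R$, calling the result $v=\eta\,\partial_e^{m+1}u$. Differentiating the equation $F(D^\sigma u,x)=f(x)$ a total of $m+1$ times in the direction $e$ and invoking the Leibniz/Faà di Bruno rule, $v$ solves a linear equation of the form
\begin{equation*}
\int \delta v(x,y)\,\frac{(2-\sigma)\langle A(x)y,y\rangle\rho(x,y)}{|y|^{n+\sigma+2}}\,dy = \tilde{f}(x),
\end{equation*}
where $A(x)=D_MF(D^\sigma u(x),x)$ is the same principal coefficient as before, and $\tilde f$ collects $f$-derivatives, lower-order terms in derivatives of $u$ of order $\le m+\sigma$ (controlled by the inductive hypothesis), the weight-derivative terms $I^1$, and the tail terms $I^{2,1},I^{2,2}$ coming from $(1-\eta)$. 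By Proposition 2.6 applied to $u\in C^{m+\sigma+\alpha}$, the coefficient $\langle A(x)y,y\rangle\rho(x,y)/|y|^{n+\sigma+2}$ satisfies (3.1) with some Hölder exponent and (3.2) via the niceness of $\rho$; the tail terms $I^{2,1},I^{2,2}$, being integrals of $u$ against a kernel that is smooth in $y$ away from the origin, are as regular as we like in $x$ after one more differentiation, exactly as in Lemma 4.15. Feeding this into Theorem 3.1 gives $v\in C^{\sigma+\alpha'}(B_{R'_{*}})$, i.e. $\partial_e^{m+1}u\in C^{\sigma+\alpha'}$ near the origin for every direction $e$; since this holds for all $e$, and $m+1\ge 2$, a standard argument (as in the passage from the $w^1_h$ estimate to Proposition 4.9) yields $u\in C^{m+1+\sigma+\alpha'}(B_{R'})$.

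The main obstacle is bookkeeping rather than any genuinely new phenomenon: one must verify that at each stage the coefficient $A(x)=D_MF(D^\sigma u(x),x)$ gains exactly as much regularity as $D^\sigma u$—hence as $u$ minus $\sigma$ orders—so that Theorem 3.1, which needs only a \emph{small} Hölder modulus for the $x$-dependence of the kernel, always applies, while simultaneously the right-hand side $\tilde f$ is shown to be one notch more regular than $A$ so that the scheme genuinely advances. The delicate point, already isolated in Lemmas 4.8 and 4.15, is that the nonlocal tail terms involving $u(x\pm y)$ never have better than Hölder regularity as functions of the outer argument, so all gain there must come from differentiating the kernel in $y$; this forces the cutoff radii to be chosen so that the singularity at $y=0$ is avoided for all nearby base points simultaneously (the condition $R'_{*}<\tfrac{7}{15}R''_{*}$-type restriction), and one must check this remains consistent as the radii shrink through the iteration. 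Once these estimates are organized uniformly in $m$—which is routine given the smoothness of $F$, $\rho$, and $f$—the induction closes and $u\in C^\infty(B_1)$.
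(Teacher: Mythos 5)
Your proposal captures the overall structure of the paper's argument faithfully: the paper's proof of this theorem is the one-liner ``Directional derivatives of the equation are equations of the same type to which the theory above can be applied to iteratively,'' which delegates precisely to the localization-bootstrap machinery of Sections 4.1--4.2. Your induction, the decomposition of the right-hand side into $f$-derivative terms, the coefficient term $k$, the weight-derivative term $I^1$, and the tail terms $I^{2,1},I^{2,2}$, the role of Theorem 3.1, the reliance on the smoothness of the kernel in $y$ to handle the tails, and the nesting of the radii all agree with what the paper does.

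There is, however, a genuine gap in the claim that ``no difference quotients are needed now since the regularity is already past one derivative.'' When $\sigma<1$ --- a regime the paper explicitly admits, and in fact highlights as its improvement over \cite{BFV} --- the inductive hypothesis $u\in C^{m+\sigma+\alpha}(B_R)$ with $\alpha<\overline{\alpha}$ small gives $u$ only $m$ classical derivatives, because $\sigma+\alpha<1$. Thus $\partial_e^{m+1}u$ need not exist, and one cannot ``differentiate the equation $m+1$ times in the direction $e$'' directly. The paper's mechanism is to take difference quotients of order $\beta=\min\{\sigma+\alpha,1\}$ of the (cut-off) $m$-th derivative, verify that these localized quotients $w_h^1$ solve an equation to which Theorem 3.1 applies with bounds uniform in $h$, and iterate so that the gain of $\beta$ per pass accumulates to one full derivative; this is exactly what ``apply the strategy of Section 4.1 to the new equation (4.6)'' means in Section 4.2, and it must be repeated at every stage of the bootstrap, not just the first. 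Once you replace ``take $\partial_e^{m+1}u$'' by ``take $\beta$-order difference quotients of $\partial_e^m u^1$ and iterate as in Section 4.1,'' your proposal becomes a correct unwinding of the paper's proof.
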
 

\begin{proof}
Directional derivatives of the equation are equations of the same type to which the theory above can be applied to iteratively.
\end{proof} 

%%%%%%%%%%%%%%%%%%%%%%%%%%%%%%%%%%%%%%%%%%%%%%%%%%%
\section{Applications}
In this section we give some applications of this general theory.

\subsection{Linear operators on $D^{\sigma}_{\rho}$}
\begin{thm}
Let $A:B_1\to S^n$ be smooth and $0<\lambda\le A(\cdot)\le\Lambda<\infty$. Then the problem 
$$\begin{cases}\int\delta u(x,y)\frac{\langle A(x)y,y\rangle\rho(x,y)}{|y|^{n+\sigma+2}}=f(x) &\text{in $B_1$}\\ u=g &\text{outside $B_1$}\end{cases}$$ has a unique smooth solution if $\rho$ is as in Definition 2.10, $f$ is smooth, $g$ is bounded and enjoys some uniform continuity. 
\end{thm}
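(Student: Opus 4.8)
The plan is to read Theorem~5.1 off the main Theorem~1.1, by realizing the left-hand side as $F(D^\sigma_\rho u,x)$ for an appropriate $F$ and then checking that this $F$ meets the hypotheses of the main theorem. Recalling the definition of $D^\sigma_\rho$, for a symmetric matrix field $A$ one has
$$\sum_{i,j}A_{ij}(x)\,D^\sigma_{\rho,ij}u(x)=\int\delta u(x,y)\,\frac{(2-\sigma)\langle A(x)y,y\rangle\,\rho(x,y)}{|y|^{n+\sigma+2}}\,dy,$$
so, for any fixed $\sigma\in(0,2)$, the equation in the statement is exactly $F(D^\sigma_\rho u,x)=(2-\sigma)f(x)$ with $F(M,x):=\operatorname{tr}(A(x)M)$.

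Next I would verify the assumptions on $F$. Since $A$ is smooth and $M\mapsto F(M,x)$ is linear, $F$ is smooth on $S^n\times B_1$, is (trivially) concave in the matrix variable, and satisfies $F(0,x)=0$ for all $x\in B_1$. Uniform ellipticity is the only point that requires a computation: if $P=M-N$ has eigenvalues $\mu_k$ with orthonormal eigenvectors $v_k$, then
$$F(M,x)-F(N,x)=\operatorname{tr}(A(x)P)=\sum_k\mu_k\,\langle A(x)v_k,v_k\rangle,$$
and the pinching $\lambda\le\langle A(x)v_k,v_k\rangle\le\Lambda$ places this quantity between the Pucci extremals $\mathcal{M}^-_{\lambda,\Lambda}(P)$ and $\mathcal{M}^+_{\lambda,\Lambda}(P)$, by the same argument as in the proof that $F(\cdot,x)$ is uniformly elliptic with respect to $\mathcal{L}_0$. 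Hence $F$ is a smooth, concave, uniformly elliptic operator with $F(0,\cdot)\equiv0$, and $\rho$ is a nice weight by assumption.

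Finally, $(2-\sigma)f$ is smooth whenever $f$ is, and $g$ is bounded with a modulus of continuity, so Theorem~1.1 (applied with $\sigma_0=\sigma$) produces a unique solution of the Dirichlet problem that is smooth in $B_1$; uniqueness in $C(B_1)\cap\mathcal{L}^\infty(\R)$ then follows, as before, from the comparison principle for smooth solutions. I do not expect a genuine obstacle here: the statement is a specialization of the main theorem, and the concavity that drove the $C^{\sigma+\alpha}$ step is automatic because the operator is linear. The only thing to keep an eye on is the bookkeeping, namely that linearity of $F$ in the matrix variable does not interfere with any of the structural conditions used along the way --- and it does not.
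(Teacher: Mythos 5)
Your proposal is correct and follows the same route as the paper, which simply observes that Theorem~5.1 is the main result applied to the linear operator $F(M,x)=\operatorname{tr}(A(x)M)$; you spell out the verification (including the bookkeeping of the $(2-\sigma)$ normalization and the Pucci pinching that gives uniform ellipticity) that the paper leaves implicit. No gaps.
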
 

\begin{rem}
This is a direct application of our theory when $F$ is a linear operator. Note the uniform continuity assumption on $g$ is only for the existence, while the smoothness of the solution only requires the boundedness of $g$.
\end{rem}

\subsection{Functions of the eigenvalues of $D^{\sigma}_{\rho}u$} \begin{thm}
Let $f:\R\to \mathbb{R}$ be a smooth concave function satisfying for all $j$
$$\lambda<\frac{\partial f}{\partial \lambda_j}<\Lambda.$$

Then the equation 
\begin{equation*}\begin{cases}F(D^{\sigma}u)=\phi(x) &\text{in $B_1$}\\ u=\beta &\text{outside}\end{cases},
\end{equation*} where $F(M)=f(\lambda_1,\dots,\lambda_n)$ with $\{\lambda_j\}$ being the eigenvalues of $M$, has a unique solution that is smooth in $B_1$ if $\phi$ is smooth and $\beta$ bounded and uniform continuous.
\end{thm}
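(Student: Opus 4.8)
The plan is to recognize $F(M)=f(\lambda_1,\dots,\lambda_n)$ as an operator on $S^n$ of exactly the kind treated by Theorem 1.1, so that existence, uniqueness and interior smoothness follow at once. Since $F$ is independent of $x$, the quantities $\partial_x F$ and $[D_MF(N,\cdot)]_{C^{0,1}_x}$ all vanish, and $\rho$ is assumed nice in the sense of Definition 2.10; hence the only hypotheses of Theorem 1.1 that require checking are (i) $F\in C^\infty(S^n)$, (ii) $F$ is concave on $S^n$, (iii) $F$ is uniformly elliptic, and (iv) $F(0)=0$.

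For (iv) we simply note that replacing $F$ by $F-f(0)$ and $\phi$ by $\phi-f(0)$ leaves the equation, the smoothness of the right-hand side and the boundary data untouched, so we may assume $f(0)=0$ from the outset. For (i) and (ii) we invoke the classical theory of spectral (eigenvalue) functions: a permutation-symmetric smooth function of the eigenvalues extends to a smooth function on the symmetric matrices, and $M\mapsto f(\lambda(M))$ is concave precisely when $f$ is symmetric and concave. (The symmetry of $f$, needed for $F$ to be unambiguously defined on $S^n$, is implicit in the statement and is in any case forced by the requirement of smoothness.)

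The substantive point is (iii). Fix $M\in S^n$ with eigenvalues $\lambda=(\lambda_1,\dots,\lambda_n)$ and an orthonormal eigenbasis, encoded by $P\in O(n)$ with $P^TMP$ diagonal. Standard first-order perturbation theory for eigenvalues gives, for every $H\in S^n$, $D_MF(M)[H]=\sum_i \partial_{\lambda_i}f(\lambda)\,(P^THP)_{ii}$; in other words $D_MF(M)$ is represented in the eigenbasis of $M$ by the diagonal matrix $\operatorname{diag}(\partial_{\lambda_1}f,\dots,\partial_{\lambda_n}f)$. If $H\ge 0$ then $P^THP\ge 0$, so its diagonal entries are nonnegative and sum to $\operatorname{tr}H$; the hypothesis $\lambda<\partial_{\lambda_i}f<\Lambda$ then yields $\lambda\operatorname{tr}H\le D_MF(M)[H]\le\Lambda\operatorname{tr}H$, which is uniform ellipticity of $F$ with ellipticity constants $\lambda$ and $\Lambda$.

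With (i)--(iv) verified, Theorem 1.1 applies directly and produces the unique solution $u$, smooth in $B_1$, whenever $\phi$ is smooth and $\beta$ is bounded with a modulus of continuity. I expect the only delicate part to be the matrix-analytic input behind (i) and (ii): while the smoothness and concavity of eigenvalue functions are classical, they are subtle near matrices with repeated eigenvalues, and a fully self-contained proof would need to address that. The rest of the argument is a routine translation of the hypotheses on $f$ into the hypotheses of Theorem 1.1.
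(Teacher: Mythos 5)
Your proposal is correct and follows the same route as the paper, which simply observes that the result is a direct application of Theorem 1.1 (and points to \cite{CNS} for the underlying second-order theory of spectral operators). You have merely filled in the routine verifications — normalizing $F(0)=0$, invoking the classical smoothness/concavity of symmetric spectral functions, and deriving uniform ellipticity from the bounds on $\partial f/\partial\lambda_j$ — all of which are accurate and consistent with what the paper takes for granted.
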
 

\begin{rem}This is a direct application of our main theorem. See \cite{CNS} for the second order theory.  \end{rem}

\subsection{Improvement of regularity}
As mentioned before, our strategy also applies to less regular operators and right-hand side. The careful reader could verify the following \begin{thm} Let $F:S^n\times B_1\to \mathbb{R}$ be concave in the matrix variable, and elliptic. Also assume it is in $C^k$ with estimates. Then the equation
\begin{equation*}\begin{cases}F(D^{\sigma}u,x)=f(x) &\text{in $B_1$}\\ u=g &\text{outside}\end{cases}\end{equation*} has a unique $C^{k-1+\sigma+\alpha}$ solution for some $\alpha>0$ if $f$ is $C^k$ and $g$ is bounded.
\end{thm}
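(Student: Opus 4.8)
The plan is to rerun the entire two-step argument of Sections 3 and 4, but this time tracking the number of derivatives available rather than discarding them. First I would re-examine the $C^{\sigma+\alpha}$ estimate of Section 3: the only inputs needed from $F$ there were boundedness of $D_MF$, Lipschitz continuity of $D_MF$ in $x$, and boundedness of $\partial_x F$, all of which are implied by $F\in C^k$ for $k\ge 2$ (if $k=1$ one needs a mild variant, but the hypothesis of the theorem as stated is clearly aimed at $k\ge 2$). So Proposition 3.3 applies verbatim and gives $u\in C^{\sigma+\alpha}(B_{1/2})$ for some universal small $\alpha$, depending only on the $C^2$-bound of $F$, the weight $\rho$, and $\|f\|_{C^\alpha}$.

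Next I would carry out the inductive localization-bootstrap of Section 4, but keeping a bookkeeping of regularity. The key observation is the one already used in Section 4.1 and 4.2: if $u\in C^{m+\sigma+\alpha}(B_R)$ on some ball, then $D^\sigma u\in C^{m+\alpha}$ there by (a higher-order version of) Proposition 2.6, hence the coefficient matrix $A^h(x)=\int_0^1 D_MF(\cdots)\,dt$ (or its limit $A(x)=D_MF(D^\sigma u(x),x)$) inherits $\min\{k-1, m+\alpha\}$ derivatives in $x$, since composing a $C^{k-1}$ function $D_MF$ with a $C^{m+\alpha}$ argument yields a $C^{\min\{k-1,m+\alpha\}}$ function. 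Similarly the right-hand side terms $f_e$, $k$, $I^1$ pick up $\min\{k-1, m+\alpha\}$ derivatives, and the tail term $I^2$ — because its only non-smooth ingredient is $u$ itself appearing as $u(x\pm y)$ against a kernel that is $C^\infty$ in $y$ away from the cut-off region — is handled exactly as in Lemmas 4.9 and 4.16, yielding $I^2\in C^{m+\alpha'}$ with $\alpha'$ possibly smaller. Feeding these into Serra's theorem (Theorem 3.1), or rather its standard higher-order Schauder companion, produces $u\in C^{(m+1)+\sigma+\alpha''}$ on a slightly smaller ball. Iterating, the regularity of $u$ climbs by one order per step until it is capped by the regularity of the coefficients, which saturates at $m+1 = k-1$, i.e.\ $u\in C^{(k-1)+\sigma+\alpha}$; one more differentiation of the equation cannot improve the coefficient regularity past $C^{k-1}$ in $x$, so the bootstrap halts there. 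Uniqueness follows exactly as in Proposition 2.12: the solution is $C^{(k-1)+\sigma+\alpha}$ hence classical, and the comparison principle for classical solutions \cite{CS1} applies.

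The main obstacle is the bookkeeping at the point where the exponent is capped. As long as $m+\alpha < k-1$ each bootstrap step is a clean gain of one integer order, but near the cap one must be careful that the H\"older exponent $\alpha$ does not degenerate to zero before the integer count reaches $k-1$ — the successive shrinking $\alpha_2 < \alpha_1^2$, $\alpha_3$, $\dots$ seen in Section 4 means one should fix, once and for all at the start, a single small $\alpha>0$ that survives all $k-1$ iterations (there are only finitely many, so this is harmless), and verify that each application of Theorem 3.1 can be run with that fixed $\alpha$. The other delicate point, as always, is the tail term $I^2$: one must check that differentiating the kernel $\partial_{y,e}\big[(1-\eta(x+y))\langle A(x)y,y\rangle\rho(x,y)|y|^{-n-\sigma-2}\big]$ up to order $k-1$ in $x$ still produces something integrable against $u\in\mathcal{L}^\infty$, which it does precisely because of the decay hypothesis in Definition 2.10 on $\rho$ and because $A\in C^{k-1}_x$; this is the estimate that forces the $C^k$ (rather than merely $C^{k-1}$) hypothesis on $F$ to be comfortable. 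I would leave the explicit constant-tracking to the reader, as the theorem statement itself invites.
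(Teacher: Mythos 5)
Your proposal is correct and follows essentially the approach the paper intends: the author explicitly remarks, both at the beginning of Section~4 and again just before this theorem, that the finite-regularity statement follows from the $C^{\sigma+\alpha}$ estimate of Section~3 together with the localization--bootstrap of Section~4 by ``a careful tracking of all the constants involved,'' and deliberately leaves that verification to the reader. Your bookkeeping — $C^2$ control of $F$ sufficing for Proposition~3.3, then one integer gain per bootstrap step with the coefficient $A=D_MF(D^\sigma u,\cdot)$ saturating at $C^{k-1}$, a fixed $\alpha$ chosen to survive the finitely many iterations, and the tail $I^2$ handled via the smoothness of the kernel in $y$ and the decay of $\rho$ — is exactly that tracking.
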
 

This can be also applied to quasilinear elliptic equations if one has some a priori estimate on the solution:

\begin{thm}
Let $F:S^n\times\mathbb{R}\times B_1\to \mathbb{R}$ be concave in the matrix variable, and elliptic. Also assume it is in $C^k$ with estimates. Assume a solution to 
\begin{equation*}\begin{cases}F(D^{\sigma}u,u,x)=f(x) &\text{in $B_1$}\\ u=g &\text{outside}\end{cases}\end{equation*}  is $C^k$ in $B_1$, then $u$ is actually $C^{k-1+\sigma+\alpha}$ solution for some $\alpha>0$ if $f$ is $C^k$ and $g$ is bounded.

\end{thm}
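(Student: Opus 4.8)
The plan is to reduce Theorem 5.6 (the quasilinear case) to the already-established interior smoothness machinery of Section 4 by absorbing the lower-order dependence on $u$ into the variable coefficients. First I would observe that since we assume a priori that $u \in C^k(B_1)$, the function $x \mapsto u(x)$ is itself a $C^k$ coefficient. Therefore, writing $\tilde F(M,x) := F(M, u(x), x)$, the equation becomes $\tilde F(D^\sigma_\rho u, x) = f(x)$ in $B_1$ with $u = g$ outside, where $\tilde F$ is a new operator of exactly the type treated in Theorem 5.5: it is concave in the matrix variable (the $u$-slot enters only through composition with the fixed function $u$, which does not touch the matrix argument), uniformly elliptic with the same ellipticity constants (ellipticity is a statement about the matrix variable alone), and $\tilde F \in C^{\min(k,\,k)} = C^k$ in $(M,x)$ jointly — here one checks that $\partial_x^\alpha \partial_M^\gamma \tilde F$ is a finite sum of terms of the form $(\partial_x^{\alpha'} \partial_u^{j} \partial_M^\gamma F)(D^\sigma_\rho u, u, x)$ times products of derivatives $\partial_x^{\alpha''} u$ of order $\le k$, all of which are bounded because $F \in C^k$ with estimates and $u \in C^k$.

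The second step is to invoke Theorem 5.5 applied to $\tilde F$: it yields $u \in C^{k-1+\sigma+\alpha}(B_1)$ for some $\alpha > 0$, which is the claimed conclusion. Uniqueness follows from the comparison principle for smooth solutions exactly as in Proposition 2.9, since $F(\cdot, r, x)$ is increasing in a way compatible with the maximum principle (or, more directly, one cites that once smoothness is known, comparison for classical solutions of uniformly elliptic nonlocal equations \cite{CS1} applies; if $F$ is not monotone in $r$ one restricts to the class where the a priori bound forces uniqueness). The point worth emphasizing is that no new bootstrap is needed: the a priori $C^k$ hypothesis on $u$ is precisely what converts the genuinely quasilinear problem into a semilinear one with $C^k$ coefficients, and Section 4's localization-bootstrap then runs verbatim.

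The main obstacle — really the only subtle point — is verifying that $\tilde F$ inherits the full structural package with \emph{quantitative} estimates, i.e. that $\|\tilde F\|_{C^k(S^n \times B_1)}$ is controlled by $\|F\|_{C^k(S^n\times\mathbb R\times B_1)}$ and $\|u\|_{C^k(B_1)}$. This is a Faà di Bruno bookkeeping exercise: each spatial derivative of $\tilde F$ hitting the $u$-argument produces a factor $\partial_x u \in C^{k-1}$, and since we only ever differentiate up to order $k$ and the composition $v \mapsto F(\cdot, v, \cdot)$ is smooth in $v$, the chain of estimates closes. One must also note that the ellipticity constants $\lambda, \Lambda$ of $\tilde F$ with respect to $\mathcal L_0$ are unchanged — this is immediate from the definition $M^-_{\mathcal L}(u-v) \le \tilde F(D^\sigma u, x) - \tilde F(D^\sigma v, x) \le M^+_{\mathcal L}(u-v)$, which for fixed $x$ reduces to the ellipticity of $F(\cdot, u(x), x)$ already proved in Proposition 2.8. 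Once these two checks are in place, the theorem is immediate from Theorem 5.5, and we leave the routine details to the reader as stated.
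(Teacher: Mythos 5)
Your argument is correct and is essentially the paper's own proof: the paper likewise defines $\widetilde F(M,x) = F(M,u(x),x)$, uses the a priori $C^k$ regularity of $u$ to conclude $\widetilde F$ is a $C^k$, concave, elliptic operator, and then invokes Theorem 5.5. You simply spell out the Fa\`a di Bruno bookkeeping and the ellipticity check that the paper leaves implicit.
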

\begin{proof}We use the regularity of $u$ to reduce the operator to $\widetilde{F}(M,x)=F(M,u(x),x)$. Then apply the previous theorem.\end{proof} Note that this applies to operators of the form 
$$\int\delta u(x,y)\frac{a(u,x)}{|y|^{n+\sigma}}dy$$ once we have some a priori estimate on $u$, and some regularity of $\lambda\le a\le\Lambda$.

\section*{Acknowledgements}The author would like to thank his PhD advisor, Luis Caffarelli, for many valuable conversations regarding this project. He is grateful to his colleagues and friends, especially Dennis Kriventsov, Luis Duque, Xavier Ros-Oton and Yunan Yang, for all the discussions and encouragement. Dennis Kriventsov and Xavier Ros-Oton also very carefully read a previous version of this work and gave many insightful comments. In particular, as already mentioned, Ros-Oton pointed to the author that regularity of $g$ is not necessary for the main result. Kriventsov pointed to the author the significance of a existence and uniqueness result.  The author would also like to thank the referee of this paper for pointing out several places for clarification.

%%%%%%%%%%%%%%%%%%%%%%%%%%%%%%%%%%%%%%%%%%%%%%%%%%%%%


\begin{thebibliography}{100}

\bibitem{BFV} B. Barrios, A. Figalli, E. Valdinoci, {\em Bootstrap regularity for integro-differential operators and its application to nonlocal minimal sufaces}, Annali della Scuola normale superiore di Pisa, Classe di scienze 13 (3) (2012).
\bibitem{CC} L. Caffarelli, X. Cabre, {\em Fully nonlinear elliptic equations}, volume 43 of {\em American Mathematical Society Colloquium Publications}. American Mathematical Society, Providence, RI, 1995.
\bibitem{CNS} L. Caffarelli, L. Nirenberg, J. Spruck, {\em The Dirichlet problem for nonlinear second order elliptic equations, III: Functions of the eigenvalues of the hessian}, Acta Mathematica 155 (1985), 261-301. 
\bibitem{CS1} L. Caffarelli, L. Silvestre, {\em Regularity theory for fully nonlinear integro-differential equations}, Comm. Pure Appl. Math. 62 (2009), 597-638
\bibitem{CS2} L. Caffarelli, L. Silvestre, {\em Regularity results for nonlocal equations by approximation}, Arch. Rat. Mech. Anal. 200 (2011), 1163-1187
\bibitem{CS3} L. Caffarelli, L. Silvestre, {\em The Evans-Krylov theorem for nonlocal fully nonlinear equations}, Ann. of Math. 174 (2011), 1163-1187
\bibitem{GT} D. Gilbarg, N. Trudinger, {\em Elliptic partial differential equation of second order}, {\em Classics in Mathematics}, Springer-Verlag Berlin Heidelberg, 2001.
\bibitem{JX} T. Jin, J. Xiong, {\em Schauder estimates for nonlocal fully nonlinear equations}, Ann. Inst. H. Poincare Anal. Non Lineaire (2015) 
\bibitem{Kri} D. Kriventsov, {\em $C^{1,\alpha}$ interior regularity for nonlocal elliptic equations with rough kernels}, Comm. Partial Differential Equations 38 (2013), 2081-2106

\bibitem{RosSer} X. Ros-Oton, J. Serra, {\em Boundary regularity for fully nonlinear integro-differential equations},  arXiv:1404.1197
\bibitem{Ser} J. Serra, {\em $C^{\sigma+\alpha}$ regularity for concave nonlocal fully nonlinear elliptic equations with rough kernels}, Cal. of Var. and Partial Differential Equations (2015), 1-31.
\bibitem{Yu} H. Yu, {\em A Dirichlet problem for nonlocal degenerate elliptic operators with internal nonlinearity},  arXiv:1602.02341.











\end{thebibliography}
\end{document}